\definecolor{dark-gray}{gray}{0.3}
\definecolor{dkgray}{rgb}{.4,.4,.4}
\definecolor{dkblue}{rgb}{0,0,.5}
\definecolor{medblue}{rgb}{0,0,.75}
\definecolor{rust}{rgb}{0.5,0.1,0.1}
\newtheorem{theorem}{Theorem}[section]
\newtheorem{lemma}[theorem]{Lemma}
\newtheorem{proposition}[theorem]{Proposition}
\newtheorem{corollary}[theorem]{Corollary}
\theoremstyle{definition}
\numberwithin{equation}{section} 
\numberwithin{figure}{section}
\numberwithin{table}{section}
\numberwithin{recipe}{section}
\providecommand{\mathbold}[1]{\bm{#1}}  
\newcommand{\econst}{\mathrm{e}}
\newcommand{\Id}{\mathbf{I}}
\providecommand{\mathbbm}{\mathbb} 
\newcommand{\R}{\mathbbm{R}}
\newcommand{\diff}[1]{\mathrm{d}{#1}}
\newcommand{\idiff}[1]{\, \diff{#1}}
\newcommand{\vct}[1]{\mathbold{#1}}
\newcommand{\mtx}[1]{\mathbold{#1}}
\newcommand{\triplenorm}[1]{{\left\vert\kern-0.25ex\left\vert\kern-0.25ex\left\vert #1
    \right\vert\kern-0.25ex\right\vert\kern-0.25ex\right\vert}}
\newcommand{\om}{\omega}
\newcommand{\la}{\langle}
\newcommand{\ra}{\rangle}
\newcommand{\vu}{\vct{u}}
\newcommand{\vx}{\vct{x}}
\newcommand{\vy}{\vct{y}}   
\newcommand{\vz}{\vct{z}}   
\begin{document}

\title[Contiguous Vortex-Patch Dipole of 2D Euler]{Steady Contiguous Vortex-Patch Dipole Solutions of \\the 2D Incompressible Euler Equation}
\author[D. Huang, J. Tong]{De Huang$^1$, Jiajun Tong$^2$}
\thanks{$^1$School of Mathematical Sciences, Peking University. E-mail: dhuang@math.pku.edu.cn}
\thanks{$^2$Beijing International Center for Mathematical Research, Peking University. E-mail: tongj@bicmr.pku.edu.cn}

\begin{abstract}
We rigorously construct the first steady traveling wave solutions of the 2D incompressible Euler equation that take the form of a contiguous vortex-patch dipole, which can be viewed as the vortex-patch counterpart of the well-known Lamb--Chaplygin dipole. Our construction is based on a novel fixed-point approach that determines the patch boundary as the fixed point of a certain nonlinear map. Smoothness and other properties of the patch boundary are also obtained.
\end{abstract}

\maketitle

\section{Introduction}
We are interested in vortex-patch traveling wave solutions of the vorticity formulation of the 2D incompressible Euler equation in $\R^2$:
\begin{equation}\label{eqt:2D_Euler}
\begin{split}
&\om_t + \vu\cdot \nabla\om = 0,\\
&\vu = -\nabla^{\perp}(-\Delta)^{-1}\om.
\end{split}
\end{equation}
Here, $\nabla^{\perp} = (-\partial_{x_2},\partial_{x_1})$. A traveling wave solution of \eqref{eqt:2D_Euler} takes the form $\om(\vx,t) = \om_0(\vx-\vct{c}t)$ for some constant (vector) speed $\vct{c}$ and some profile function $\om_0$. Without loss of generality, we may assume that the solution travels in the $x_1$-direction, i.e. $\om(x_1,x_2,t) = \om_0(x_1-ct,x_2)$. In a co-moving frame at the same speed $c$, the vorticity profile $\om_0$ solves the stationary 2D incompressible Euler equation:
\begin{equation}\label{eqt:steady_2D_Euler}
\begin{split}
&\nabla^{\perp}\psi\cdot \nabla\om = 0,\\
&-\Delta \psi = \om,\quad \lim_{|\vx|\rightarrow +\infty}-\partial_{x_2}\psi = c.
\end{split}
\end{equation}
As usual, $\psi$ is called the stream function. Hence, a traveling wave solution is also viewed as a steady solution, as its shape is unchanged over time.

While the global wellposedness of the 3D incompressible Euler equations remains a challenging open problem in fluid dynamics, the study of steady solutions of the 2D and 3D Euler equations has also been a focus of this field over the past century. In particular, an abundance of effort has been dedicated to the construction and stability analysis of nontrivial steady solutions with compactly supported vorticity, mostly motivated by intriguing observations from real physics experiments or numerical simulations that model rigid motions of vortices in fluids. 

In the three-dimensional case, the study of steady vortex ring solutions has been active for a long time since the work of Helmholtz \cite{von2006integrals}. Numerous results have been achieved regarding existence \cite{fraenkel1970steady,fraenkel1972examples,norbury1972steady,fraenkel1974global,ni1980,friedman1981vortex,ambrosetti1989existence,yang1995531,de2013desingularization,cao2021desingularization,abe2022existence,cao2022asymptotic} and uniqueness and stability \cite{amick1988uniqueness,wan1988variational,cao2019local,cao2022uniqueness,cao2023remarks} of steady vortex rings. Notably, in the family of steady vortex rings for the 3D axisymmetric Euler equations, the first explicit vorticity solution whose support is actually a ball was constructed by Hill \cite{hill1894vi}, known as Hill's spherical vortex, whose uniqueness and stability were established in followup works \cite{amick1986uniqueness,choi2024stability}.

In the past decades, there has also been a growing interest in the study of steady solutions of the 2D Euler equation with compactly supported vorticity. An important nontrivial explicit solution of this type is the Lamb--Chaplygin dipole, introduced by Chaplygin \cite{chaplygin2007one} (an English reprint of Chaplygin's original article) and Lamb \cite{lamb1924hydrodynamics} in the early 20th century. Its vorticity is supported in a disc and continuously vanishes at the boundary of the disc. This circular vortex dipole can be viewed as a two-dimensional analogue of Hill's spherical vortex. Here, a vortex dipole means a counter-rotating vortex pair supported in a symmetric bipartite domain. The uniqueness of the Lamb--Chaplygin circular dipole was established in \cite{burton1996uniqueness,burton2005isoperimetric}, and its orbital stability was recently proved in \cite{abe2022stability}. Moreover, existence \cite{norbury1975steady,turkington1983steady,burton1988steady,badiani1998existence,jianfu1991existence,smets2010desingularization,cao2021traveling,coti2023stationary,castro2023traveling} and stability \cite{wan1985nonlinear,burton2005global,burton2021compactness,cao2021nonlinear,choi2022stability,choi2022stability} of general nontrivial planar steady flows have been extensively studied as well. Among these solutions, the vortex-patch type solutions (with vorticity being compactly supported patch-wise constants) are of particular interest \cite{burbea1982motions,turkington1983steady,hmidi2013boundary,castro2016existence,castro2016uniformly,hmidi2017existence,hassainia2020global,gomez2021symmetry}, as they model localized strong rotations in planar flows. Some of these works not only consider the 2D Euler equation but also the family of the generalized Surface Quasi-Geostrophic equations. Most of these nontrivial steady vortex patches constructed so far are either a single rotating patch with a constant angular velocity (also called a V-state), or co-rotating multi-patches, or a counter-rotating traveling vortex-patch dipole, with the vorticity being constant in each connected component of its support. 

However, to the best of our knowledge, there has not been found theoretically a nontrivial (non-radial) example of compactly supported steady vortex-patch solution whose two or more patches with different (constant) values of vorticity touch each other. On the other hand, the existence of a traveling vortex-patch dipole with touching patches was suggested by an early numerical work of Sadovskii \cite{sadovskii1971vortex}. Essentially the same problem was also considered earlier by Gol'dshtik \cite{goldshtik1962mathematical} and Childress \cite{childress1966solutions}. Ever since Sadovskii's finding, there have been plenty of numerical studies \cite{pierrehumbert1980family,saffman1980equilibrium,saffman1982touching,wu1984steady,moore1988calculation,chernyshenko1993density,freilich2017sadovskii,childress2018eroding} that attempted to investigate boundary regularity, uniqueness, and other properties of this special vortex-patch solution, yet its existence has not been rigorously proved. Besides, Sadovskii's scenario was also observed in \cite{shariff2008contour,childress2016eroding,childress2018eroding} as an accurate approximation of the large-time asymptotic profile in the head-on collision of two anti-symmetric vortex rings.

In the present work, we address the long standing open problem regarding Sadovskii's scenario by proving the existence of traveling wave solutions of \eqref{eqt:2D_Euler} that take the form of a contiguous vortex-patch dipole, where the two patches with opposite signs of vorticity share a common boundary. 

\begin{theorem}\label{thm:main_informal}
There exists a simply connected domain $D \subset \R^2$ with positive area and $C^1$ boundary, centered at the origin and symmetric respect with to both axes, such that 
\[\om_D(\vx) = \mathrm{sgn}(x_2)\cdot \mathbf{1}_D\]
is a solution to \eqref{eqt:steady_2D_Euler} with $\psi$ satisfying
\[\psi(x_1,0)=0,\ x_1\in \R,\quad\text{and}\quad \lim_{|\vx|\rightarrow \infty}-\psi_{x_2}(\vx) = c_D,\] 
for some constant $c_D>0$. Moreover, the boundary of $D$ is analytic away from the $x_1$-axis.
\end{theorem}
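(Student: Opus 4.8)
The plan is to set up a fixed-point scheme for the patch boundary and invoke a suitable version of the Schauder (or Banach) fixed-point theorem. By the bipartite symmetry, it suffices to determine the upper patch $D^+ = D \cap \{x_2 > 0\}$, whose boundary, away from the $x_1$-axis, I would parametrize as a graph or, more conveniently, via its polar radius $r = \rho(\theta)$ over an interval of angles, with the remaining boundary lying on the $x_1$-axis. The stream function $\psi$ is recovered from $\om_D = \mathrm{sgn}(x_2)\mathbf{1}_D$ by the Biot--Savart law plus the linear shear $-c x_2$ enforcing the far-field condition $\lim_{|\vx|\to\infty}(-\psi_{x_2}) = c$; the condition $\psi(x_1,0) = 0$ is automatic from the odd symmetry. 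The key structural fact is that $\nabla^\perp\psi \cdot \nabla\om_D = 0$ holds \emph{iff} $\om_D$ is constant on streamlines, which for a patch means the patch boundary (away from the axis) must be a level curve of $\psi$, say $\psi \equiv \mu$ on $\partial D^+ \setminus \{x_2=0\}$ for some constant $\mu$ (which one may normalize, e.g. $\mu = 0$, exploiting the freedom in $c$). This reduces the problem to the \emph{free-boundary / overdetermined} problem: find a curve such that the $\psi$ it generates is constant along it.

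The scheme I would use is the Cho--Wan / contour-dynamics style reformulation: define a map $\mathcal{T}$ on curves (equivalently, on profile functions $\rho$) that takes a candidate boundary $\Gamma$, computes $\psi_\Gamma$ via Biot--Savart, and returns the level set $\{\psi_\Gamma = \mu\}$ (or a graph/normal-perturbation correction extracted from $\psi_\Gamma$ near $\Gamma$); a fixed point of $\mathcal{T}$ is exactly a solution. First I would fix a convenient ansatz domain (perhaps perturbing off the semicircular Lamb--Chaplygin profile, or off a half-disc) and a Banach space of admissible profiles --- a closed, bounded, convex set in a Hölder space $C^{1,\alpha}$ (or $C^{2,\alpha}$) of even $2\pi$-periodic functions, staying uniformly bounded away from degeneracy. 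Second, I would establish mapping properties of $\mathcal{T}$: the Biot--Savart potential of an $L^\infty$ patch is $C^{1,\alpha}$ for every $\alpha<1$ (and gains regularity wherever $\partial D$ is smooth), so $\mathcal{T}$ maps the admissible set into a compact subset of it, using elliptic regularity and potential-theory estimates for single/double layer potentials. Third, continuity of $\mathcal{T}$ in the chosen topology follows from continuous dependence of layer potentials on the curve. Schauder then yields a fixed point; Banach's theorem would instead require a contraction estimate, which is cleaner if one works close to an explicit profile.

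The main obstacle --- and the genuinely new difficulty compared with the classical disjoint-patch dipoles --- is the behavior \emph{at the corner points on the $x_1$-axis}, where the upper patch, the lower patch, and the exterior all meet. There the vorticity is discontinuous across the axis (it jumps from $+1$ to $-1$), so $\psi$ is only $C^{1,\alpha}$ there and the boundary curve is not expected to be analytic, only $C^1$; one must show the fixed-point curve meets the axis at a definite (nonzero, non-cusp) angle and that the overdetermined condition $\psi = \mu$ remains consistent up to that corner. This requires a careful local analysis near the triple point --- likely a Mellin-transform / homogeneous-solution expansion of the mixed boundary-value problem there --- to pin down the opening angle and to show the self-consistency of the scheme survives the loss of regularity, so that the abstract fixed point genuinely solves \eqref{eqt:steady_2D_Euler} in the classical sense away from the axis and in a weak sense across it. Once the fixed point is obtained, upgrading regularity away from the axis to analyticity is a standard bootstrap: on the real-analytic level set $\{\psi = \mu\}$ away from the corner, $\psi$ is harmonic off $\partial D$ with analytic data, so the free boundary is analytic by classical elliptic free-boundary regularity (e.g. Kinderlehrer--Nirenberg--Spruck).
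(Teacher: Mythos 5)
Your overall skeleton (recover $\psi$ from the patch by the half-plane Green's function, demand that the free boundary be a level curve, iterate a ``level-set'' update, and close with Schauder) is indeed the right idea, but two structural assumptions in your setup are wrong and they matter. First, neither $\mu$ nor $c$ is a free normalization: odd symmetry already forces $\psi=0$ on the $x_1$-axis, and since the patch boundary \emph{touches} the axis the level value is forced to be $\mu=0$; the speed $c$ is then not adjustable either, but is determined by the patch through the compatibility condition at the contact points, $c=\phi_{x_2}(\pm1,0)$ (Lemma \ref{lemma:Gamma_c}). Second, with $\mu=0$ your update ``return the level set $\{\psi_\Gamma=0\}$'' is ill-defined exactly where the new phenomenon lives: the zero level set of $\psi$ always contains the entire $x_1$-axis, so near the contact points it does not single out a curve at all. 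Some device is needed to desingularize the touching; the paper does this by replacing $\psi$ with $F(x_1,x_2;f)=\phi(x_1,x_2;f)/x_2-c(f)$ and proving strict monotonicity of $F$ in $x_1$ and $x_2$, so that its zero set is a genuine graph over $[-1,1]$ vanishing exactly at $\pm1$. Without such a reformulation your map $\mathcal{T}$ is not well defined near the axis, and nothing in your scheme prevents the updated ``boundary'' from detaching from, or sliding along, the axis.

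The deeper gap is the invariance/compactness step, which you assert rather than prove and which is where essentially all the work lies. One must (a) keep iterates quantitatively away from the trivial fixed point $f\equiv0$ (a lower barrier), (b) control them from above, in particular near $x=\pm1$ where the correct upper barrier degenerates like $(1-|x|)\log\frac{1}{1-|x|}$, and (c) have a compactness estimate compatible with the true boundary regularity. Point (c) is incompatible with your choice of spaces: at the contact points the solution satisfies $-f'(x)\simeq\log\frac{1}{1-x}$, so the boundary is $C^1$ but \emph{not} $C^{1,\alpha}$ there (in graph, arclength, or polar parametrization), and in fact the image of any admissible profile under the level-set map already exhibits this logarithmic vertical tangency; hence a closed convex subset of $C^{1,\alpha}$ (let alone $C^{2,\alpha}$) ``bounded away from degeneracy'' cannot be mapped into itself, and a contraction near an explicit profile is hopeless since the solution is not a perturbation of any closed-form flow. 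The paper works instead in $L^\infty$ with a $C^{1/2}$ bound plus explicit upper and lower barriers, and even then cannot verify invariance of this set under the map itself: it passes to the auxiliary flow $\partial_tf=\mtx{R}(f)-f$, shows the barriers are preserved by the flow, applies Schauder to the time-$T$ solution operators to produce periodic orbits, and lets $T\to0$. Finally, your endgame is also lighter than it can be: the contact behavior (vertical tangency, hence a $C^1$ closed curve rather than a corner of some angle to be computed by a Mellin expansion) falls out of the derivative estimates for the fixed point, and the Kinderlehrer--Nirenberg analyticity step requires verifying $|\nabla\psi|\neq0$ on the free boundary \emph{and} that $\psi$ is $C^2$ up to $\Gamma$ from both sides, which for a patch (where $\nabla^2\phi$ is a singular integral of $\mathbf{1}_\Omega$) needs a dedicated estimate rather than a standard citation.
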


A more detailed version of Theorem \ref{thm:main_informal} with finer characterizations of $D$ will be given in the next section (see Theorem \ref{thm:main_formal}). In particular, the boundary of $D$ in the first quadrant will be described as the graph of some monotone and locally analytic function $f$. It is also proved that the boundary of $D$ touches the $x_1$-axis vertically at the two endpoints (the left-most and the right-most points in $D$). Note that, by the scaling invariance of the Euler equation, the rescaled vortex-patch dipole
\begin{equation}\label{eqt:scale_invariance}
\om_{a,b}(\vx) = a\,\om_D(b\vx)
\end{equation}
is also a solution to \eqref{eqt:steady_2D_Euler} for any $a\in \R, b> 0$, with the speed constant changed to $c_{a,b} = (a/b)c_D$ correspondingly. Formally, our construction can be viewed as a vortex-patch counterpart of the Lamb--Chaplygin dipole, for the vorticity is also supported in one simply connected, symmetric domain and has opposite signs on the upper and lower halves of the domain. As a comparison, we illustrate a contiguous vortex-patch dipole and a Lamb--Chaplygin dipole in Figure \ref{fig:dipole} (a) and (b), respectively. Unlike the Lamb--Chaplygin dipole, the support of our vortex patch solution is not a perfect disc but an olive-shaped domain. We remark that the vorticity profile plotted in Figure \ref{fig:dipole}(a) is an accurate approximate solution obtained by numerical computation that will be explained in the final section of this paper.

\begin{figure}[!ht]
\centering
    \begin{subfigure}[b]{0.49\textwidth}
        \includegraphics[width=1\textwidth]{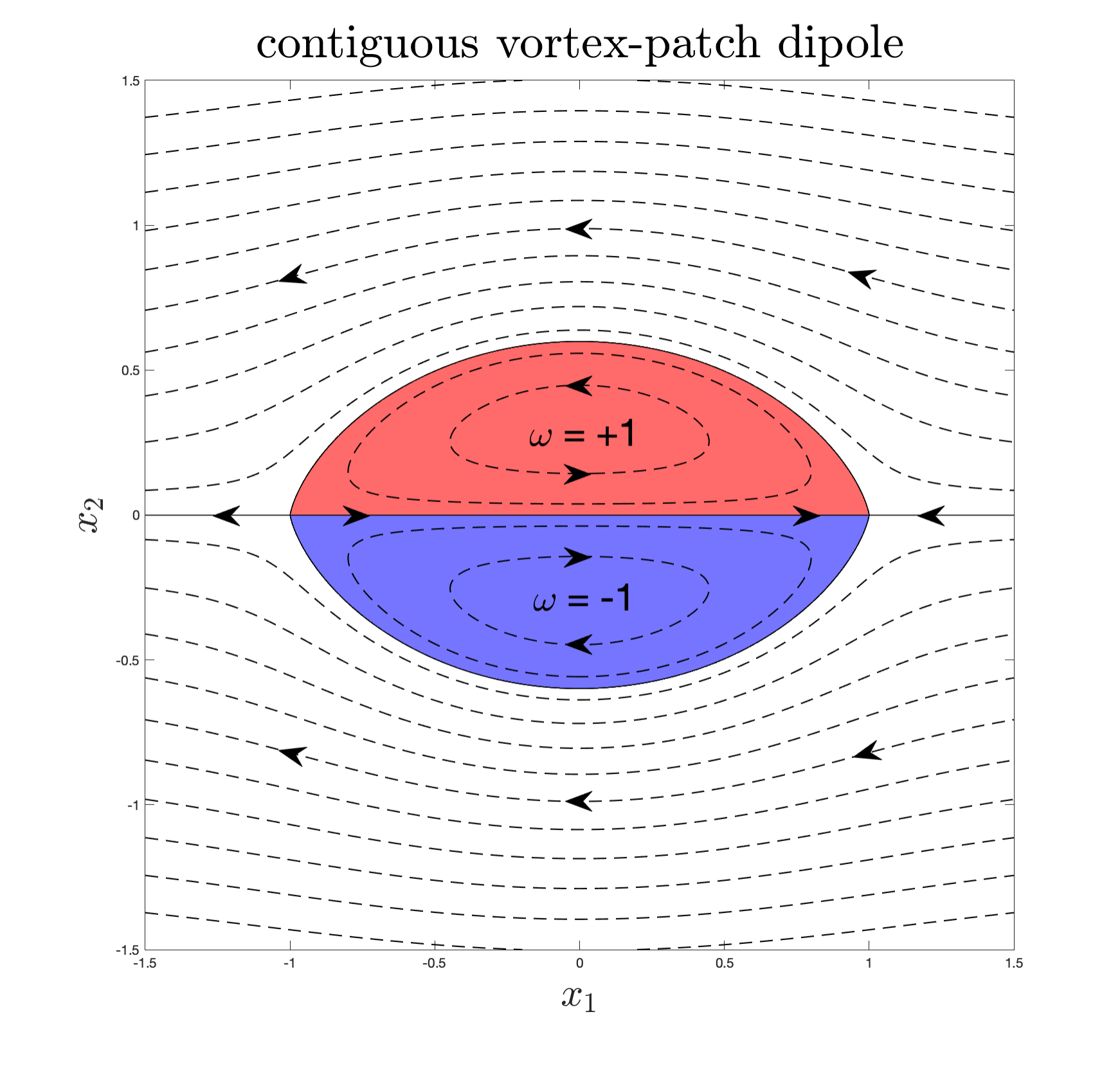}
        \caption{\small Contiguous vortex-patch dipole}
    \end{subfigure}
    \begin{subfigure}[b]{0.49\textwidth}
        \includegraphics[width=1\textwidth]{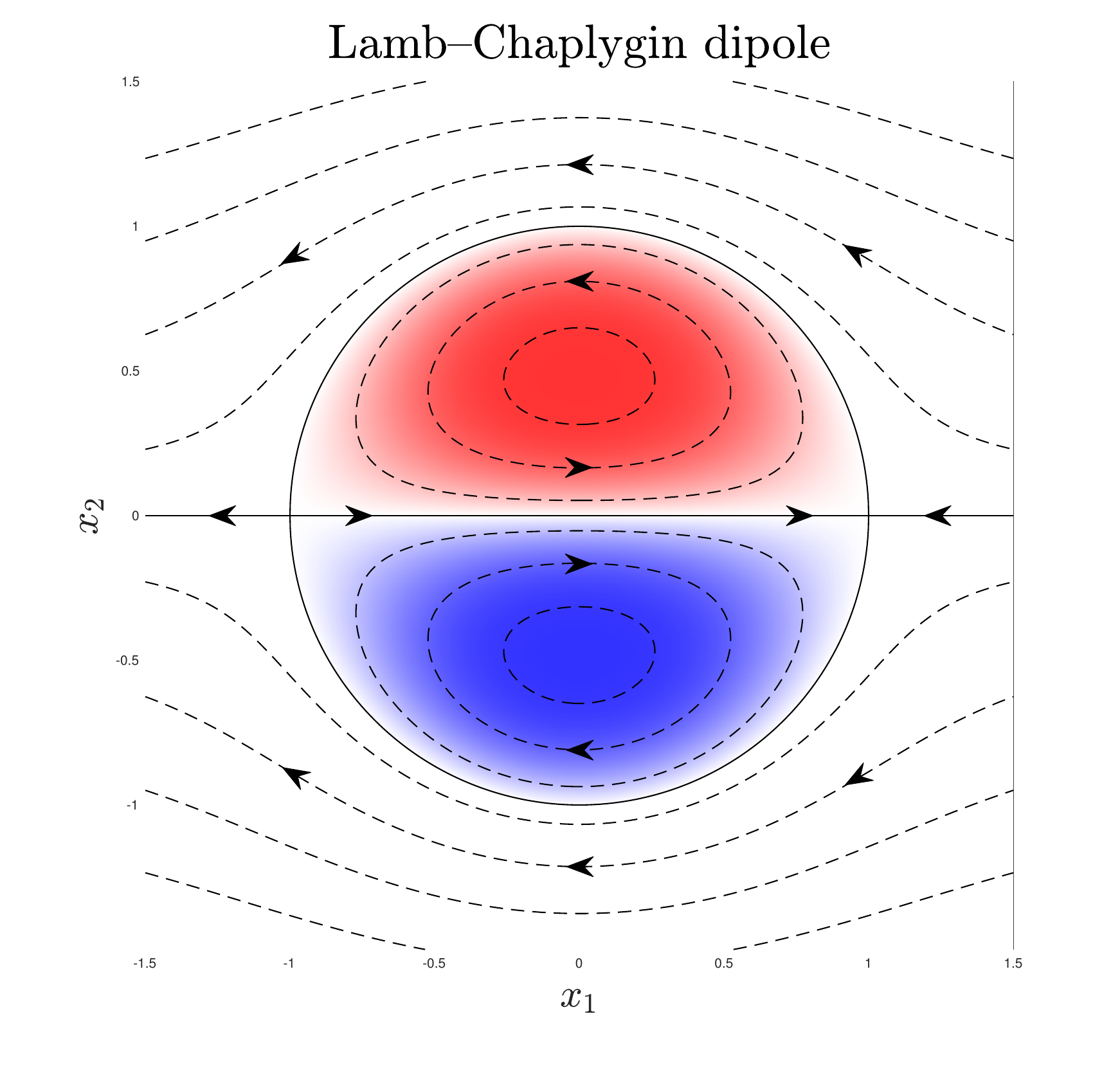}
        \caption{\small Lamb--Chaplygin dipole}
    \end{subfigure}
    \caption[Dipole]{(a) A contiguous vortex-patch dipole. (b) A Lamb--Chaplygin dipole. In both figures, the dashed lines are level sets of the corresponding stream function $\psi$, and the arrows represent the directions of the flow field.}
    \label{fig:dipole}
\end{figure}

In general, there are two major approaches of constructing nontrivial steady solutions to the 2D Euler equation. One is the variational approach that traces back to the general theory of steady vortex rings in 3D flows by Fraenkel--Berger \cite{fraenkel1974global}, and the other is the bifurcation approach following the idea of Burbea \cite{burbea1982motions}. In the variational framework, there are also two different perspectives, namely the stream function method and the vorticity method. The stream function method constructs a solution to the stationary 2D Euler equation \eqref{eqt:steady_2D_Euler} by solving the semilinear elliptic problem 
\begin{equation}\label{eqt:semilinear_elliptic}
-\Delta \psi = h(\psi),
\end{equation}
where $h$ is some non-decreasing function such that $h(t)=0$ for $t<0$; see e.g. \cite{norbury1975steady,smets2010desingularization,ambrosetti1990asymptotic,jianfu1991existence}. If one can find a solution $\psi$ to \eqref{eqt:semilinear_elliptic} for some prescribed function $h$, then $\om=h(\psi)$ gives a solution to \eqref{eqt:steady_2D_Euler}. In particular, such a solution corresponds to a vortex patch solution when $h$ is the Heaviside function. The existence of solutions to \eqref{eqt:semilinear_elliptic} for a given $h$ is usually proved by constructing $\psi$ as the maximizer of a variational problem related to the kinetic energy of the flow. As a variant of the stream function method, the vorticity method formulates the energy variational problem in terms of the vorticity $\om$ and focuses on finding a vorticity solution of \eqref{eqt:steady_2D_Euler} as the energy maximizer; see e.g. \cite{turkington1983steady,burton1988steady,badiani1998existence}.

The bifurcation approach constructs nontrivial solutions of \eqref{eqt:steady_2D_Euler} by perturbing simpler steady solutions, based on perturbation or bifurcation theories such as the Crandall--Rabinowitz theorem. For instance, Burbea \cite{burbea1982motions} constructed $m$-fold V-states using bifurcation from radial solutions, Hassainia--Masmoudi--Wheeler \cite{hassainia2020global} recently extended Burbea's result to global bifurcation of rotating vortex patches, Hmidi--Mateu \cite{hmidi2017existence} constructed counter-rotating vortex pairs as desingularization of a pair of point vortices with opposite circulations, and Castro--Lear \cite{castro2023traveling} constructed non-trivial planar traveling waves as bifurcation from the Couette flow. 

However, neither of the approaches above seems to be readily applicable in constructing a contiguous vortex-patch dipole solution like the one introduced in Theorem \ref{thm:main_informal}. On the one hand, although by symmetry we can formulate our problem as a free boundary problem of the form \eqref{eqt:semilinear_elliptic} (with $h$ being the Heaviside function) in the upper half-space, we do not get to prescribe the property that the support of $\om$ should touch the $x_1$-axis. On the other hand, a contiguous vortex-patch dipole does not seem to be the perturbation of any closed-form solution, and thus perturbation or bifurcation theories do not help with our problem.

Instead, we will construct our solution using a novel fixed-point method. Let us sketch the idea of our proof. Firstly, we reformulate the problem of finding the domain $D$ in Theorem \ref{thm:main_informal} into an equation of its upper boundary $\partial D\cap \{x_2\geq 0\}$ defined by a profile function $x_2=f(x_1)$, and we further transform the equation of $f$ into an equivalent fixed-point problem of a nonlinear map $\mtx{R}$. Secondly, we establish some useful estimates of $\mtx{R}$ and construct a proper function set $\mathbb{D}$ in which we can well control the behavior of $\mtx{R}$. Thirdly, we study the behavior of a continuous-time dynamical system in $\mathbb{D}$ induced by $\mtx{R}$ and prove the existence of time-periodic solutions with any positive period by the Schauder fixed-point theorem. Finally, we obtain a fixed point of $\mtx{R}$ as the limit of a sequence of time-periodic solutions with the period tending to $0$. Let us remark that our numerical experiments strongly suggest the vortex-patch solution $\om_{D}$ given in Theorem \ref{thm:main_informal} is unique up to rescaling and its support $D$ is convex. Unfortunately, we have not been able to prove these extra claims. We shall leave it to future works.

Upon the releasing of our paper, Choi--Jeong--Sim also posted an independent work \cite{choi2024existence} proving the existence of a steady contiguous vortex-patch dipole (they call it the Sadovskii vortex patch) using a totally different approach based on the variational framework. Their variational construction of the Sadovskii vortex patch as the kinetic maximizer naturally relates to its potential dynamical stability, while our fixed-point construction gives a better description of the patch boundary and in particular confirms the numerical observation that the touching angle is $90$ degrees (see \cite{sadovskii1971vortex}). Note that since neither of these works has proved the uniqueness of the Sadovskii vortex patch, it is still unclear whether their construction coincides with ours. Nevertheless, our numerical simulation strongly suggests uniqueness up to rescaling. The reader can also find a more comprehensive introduction to various topics related to Sadovskii's scenario in their paper.

The rest of this paper is organized as follows. In Section \ref{sec:main_result}, we set up the problem under suitable assumptions and present our main result with full details. We reformulate the problem into a well-defined fixed-point problem in Section \ref{sec:fixed_point}. Sections \ref{sec:existence} and \ref{sec:regularity} are devoted to establishing the existence and the regularity of a fixed-point solution, respectively. Finally, Section \ref{sec:numerical} explains how to numerically compute a fixed-point solution.

\section{Contiguous vortex-patch dipoles}\label{sec:main_result}
Our goal is to construct a nontrivial traveling wave solution $\om(x_1,x_2,t) = \om_D(x_1-ct,x_2)$ of the 2D incompressible Euler equation \eqref{eqt:2D_Euler} in $\R^2$, with the vorticity profile $\om_D$ given by a contiguous vortex-patch dipole,
\[\om_D(\vx) = \mathrm{sgn}(x_2)\cdot \mathbf{1}_D,\]
where $D\subset \R^2$ is a simply connected closed domain that is symmetric with respect to the $x_1$-axis. In particular, $\om_D(x_1,x_2)$ is an odd function of $x_2$. As a result, we may restrict our problem to the upper half-space $\mathbb{H} := \{(x_1,x_2): x_2\geq 0\}$.

Let $\Omega := D\cap \mathbb{H}$ denote the closed upper half of $D$ in $\mathbb{H}$. Then, our task is to find an appropriate $\Omega\subset \mathbb{H}$ such that
\begin{equation}\label{eqt:solution_form}
\om_\Omega = \mathbf{1}_\Omega
\end{equation}
solves (weakly) the stationary 2D incompressible Euler equation in the half-space $\mathbb{H}$,
\begin{equation}\label{eqt:steady_2D_Euler_half}
\begin{split}
&\nabla^{\perp}\psi\cdot \nabla\om_\Omega = 0,\\
&\psi = \phi - cx_2, \quad \phi = (-\Delta_{\mathbb{H}})^{-1}\om_\Omega,
\end{split}
\end{equation}
with some constant (speed) $c$ that depends on $\Omega$. Here, $\nabla^{\perp} = (-\partial_{x_2},\partial_{x_1})$, and $(-\Delta_{\mathbb{H}})^{-1}$ denotes the inverse of negative Laplacian on $\mathbb{H}$ subject to zero Dirichlet boundary condition on the $x_1$-axis and vanishing condition in the far field. Generally, $\psi$ is called the overall stream function, while $\phi$ is called the vorticity-induced stream function. More precisely, we have
\begin{equation}\label{eqt:phi_definition}
\phi(\vx) := (-\Delta_{\mathbb{H}})^{-1}\om_\Omega(\vx) = \frac{1}{2\pi}\int_{\mathbb{H}}\om_\Omega(\vy)\ln\frac{|\bar{\vx}-\vy|}{|\vx-\vy|}\idiff \vy = \frac{1}{2\pi}\int_\Omega\ln\frac{|\bar{\vx}-\vy|}{|\vx-\vy|}\idiff \vy,
\end{equation}
where $\vx = (x_1,x_2)$ and $\bar{\vx} = (x_1,-x_2)$, and hence, 
\[\phi_{x_1}(\vx) = \frac{1}{2\pi}\int_\Omega\left(\frac{x_1-y_1}{|\bar{\vx}-\vy|^2} - \frac{x_1-y_1}{|\vx-\vy|^2} \right)\idiff \vy,\]
\[\phi_{x_2}(\vx) = \frac{1}{2\pi}\int_\Omega\left(\frac{x_2+y_2}{|\bar{\vx}-\vy|^2}- \frac{x_2-y_2}{|\vx-\vy|^2} \right)\idiff \vy.\]

\subsection{Assumptions on $\Omega$}\label{sec:Omega_assumption} To make our problem more tractable, we make a few more assumptions on the desired domain $\Omega\subset \mathbb{H}$.

We assume that $\Omega$ is simply connected with positive finite area and that $\Omega$ is also symmetric with respect to the $x_2$-axis. This further symmetry assumption implies that the complete support $D$ is centered at the origin $(0,0)$ and symmetric with respect to both axes. More specifically, we assume that $\Omega$ is given by 
\[\Omega = \{(x_1,x_2):\, x_1\in [-1,1],\, x_2\in [0,f(x_1)]\},\]
where $f$ is some continuous even function on $[-1,1]$ satisfying $f(-1)=f(1)=0$ and $f(x)>0$ for $x\in(-1,1)$. In general, one can assume that $\Omega \cap \{(x_1,0): x_1\in\R\} = \{(x_1,0):x_1\in [-r,r]\}$ for some $r>0$. However, in view of the scaling property \eqref{eqt:scale_invariance}, it suffices to only consider the case $r=1$. In what follows, we will always denote by $\Gamma$ the upper boundary of $\Omega$, that is, 
\begin{equation}\label{eqt:Gamma_para}
\Gamma := \{(x_1,x_2):\, x_1\in[-1,1],\ x_2=f(x_1)\}.
\end{equation}

Now, constructing a (weak) solution to \eqref{eqt:steady_2D_Euler_half} of the form \eqref{eqt:solution_form} for some $\Omega$ satisfying the assumptions above boils down to finding an suitable function $f$ that defines the patch.

\subsection{Reformulation of the problem} It is well known that, in order for $\om_\Omega = \mathbf{1}_\Omega$ in \eqref{eqt:solution_form} to be a steady vortex-patch solution to \eqref{eqt:steady_2D_Euler_half}, its support boundary $\partial \Omega$ must be (part of) a level set of the stream function $\psi(\vx) = \phi(\vx) - cx_2$, with $\phi$ given in \eqref{eqt:phi_definition}. Since $\partial \Omega$ touches the $x_1$-axis where $\psi = 0$ by definition, we can conclude that $\psi=0$ along $\Gamma$. That is, for $\om_\Omega$ to be a (weak) solution of \eqref{eqt:steady_2D_Euler_half}, $\phi$ must satisfy
\[
\phi(x_1,x_2) - cx_2 = 0,\quad (x_1,x_2)\in \Gamma.
\]
By \eqref{eqt:Gamma_para}, this is equivalent to 
\begin{equation}\label{eqt:solution_condition}
\phi(x,f(x)) - cf(x) = 0,\quad x\in[-1,1].
\end{equation}
Here, $\phi$ and its derivatives are given by
\[\phi(\vx) = \frac{1}{2\pi}\int_{-1}^1\idiff y_1\int_0^{f(y_1)}\ln\frac{|\bar{\vx}-\vy|}{|\vx-\vy|}\idiff y_2,\]
\[\phi_{x_1}(\vx) = \frac{1}{2\pi}\int_{-1}^1\idiff y_1\int_0^{f(y_1)}\left(\frac{x_1-y_1}{|\bar{\vx}-\vy|^2} - \frac{x_1-y_1}{|\vx-\vy|^2}\right)\idiff y_2,\]
\[\phi_{x_2}(\vx) = \frac{1}{2\pi}\int_{-1}^1\idiff y_1\int_0^{f(y_1)}\left(\frac{x_2+y_2}{|\bar{\vx}-\vy|^2} - \frac{x_2-y_2}{|\vx-\vy|^2} \right)\idiff y_2.\]

From \eqref{eqt:solution_condition} and our assumptions on $f$, we immediately obtain a compatible condition that determines the speed constant $c$ from $\Omega$ or from the profile function $f$.

\begin{lemma}\label{lemma:Gamma_c}
Suppose that $\Omega\subset \R^2$ satisfies the assumptions in Section \ref{sec:Omega_assumption}. If $\om_\Omega = \mathbf{1}_\Omega$ is a solution of \eqref{eqt:steady_2D_Euler_half} with some constant $c=c_\Omega$, then
\begin{equation}\label{eqt:c_definition}
c = \phi_{x_2}(1,0) = \frac{1}{\pi}\int_{\Omega}\frac{y_2}{(1-y_1)^2+y_2^2}\idiff \vy,
\end{equation}
where $\phi$ is given by \eqref{eqt:phi_definition}. Moreover, in view of \eqref{eqt:Gamma_para}, 
\[c = c(f) = \frac{1}{\pi}\int_{-1}^1\idiff y_1\int_0^{f(y_1)}\frac{y_2}{(1-y_1)^2+y_2^2}\idiff y_2 = \frac{1}{2\pi}\int_{-1}^1 \ln \frac{(1-y_1)^2 + f(y_1)^2}{(1-y_1)^2}\idiff y_1.\]
\end{lemma}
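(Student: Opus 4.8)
The starting point is the identity \eqref{eqt:solution_condition}, namely $\phi(x,f(x)) = cf(x)$ for all $x \in [-1,1]$, which is available by hypothesis --- it was derived above from $\om_\Omega$ being a weak solution of \eqref{eqt:steady_2D_Euler_half}. The plan is to recover $c$ by sending $x \to 1^-$ in a suitable difference quotient for $\phi$ in the $x_2$ variable. Two elementary facts about $\phi$ suffice. First, $\phi$ vanishes on the $x_1$-axis: when $x_2 = 0$ we have $\bar{\vx} = \vx$, so the integrand in \eqref{eqt:phi_definition} is $\ln 1 = 0$. Second, $\phi \in C^1(\mathbb{H})$ with $\nabla\phi$ continuous up to the $x_1$-axis and given by the integral formulas displayed just before the lemma; this is the classical $C^{1,\alpha}$ regularity of a two-dimensional logarithmic potential of a bounded, compactly supported density --- here the odd extension of $\om_\Omega$ across the axis --- and it is insensitive to the shape of $\partial\Omega$.

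Now fix $x \in (-1,1)$, so that $f(x) > 0$. Subtracting $\phi(x,0) = 0$ from \eqref{eqt:solution_condition}, dividing by $f(x)$, and applying the mean value theorem to $y_2 \mapsto \phi(x,y_2)$ on $[0,f(x)]$ gives
\[
c \;=\; \frac{\phi(x,f(x)) - \phi(x,0)}{f(x)} \;=\; \phi_{x_2}(x,\xi_x) \qquad \text{for some } \xi_x \in (0,f(x)).
\]
Since $f$ is continuous with $f(1) = 0$, letting $x \to 1^-$ forces $\xi_x \to 0$, hence $(x,\xi_x) \to (1,0)$, and the continuity of $\phi_{x_2}$ yields $c = \phi_{x_2}(1,0)$.

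It remains to evaluate $\phi_{x_2}(1,0)$. At $\vx = (1,0)$ one has $\bar{\vx} = \vx$, so $|\bar{\vx}-\vy| = |\vx-\vy|$ and the two terms in the formula for $\phi_{x_2}$ add up, giving
\[
\phi_{x_2}(1,0) \;=\; \frac{1}{2\pi}\int_\Omega \frac{2y_2}{(1-y_1)^2 + y_2^2}\idiff \vy \;=\; \frac{1}{\pi}\int_\Omega \frac{y_2}{(1-y_1)^2 + y_2^2}\idiff \vy,
\]
which is absolutely convergent because the integrand is dominated by $\bigl((1-y_1)^2 + y_2^2\bigr)^{-1/2} = |\vy - (1,0)|^{-1}$, a locally integrable function on $\R^2$. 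Writing $\Omega = \{\, x_1 \in [-1,1],\ y_2 \in [0,f(y_1)] \,\}$ and carrying out the inner integral,
\[
\int_0^{f(y_1)} \frac{y_2}{(1-y_1)^2 + y_2^2}\idiff y_2 \;=\; \half\,\ln\frac{(1-y_1)^2 + f(y_1)^2}{(1-y_1)^2},
\]
then yields the last expression for $c$ in \eqref{eqt:c_definition}.

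The computation above is elementary; the only point that deserves care is the passage $x \to 1^-$, i.e. the continuity of $\nabla\phi$ up to the corner $(1,0)$ of $\partial\Omega$. I would justify it by the standard fact that the logarithmic potential of an $L^\infty$ compactly supported density is $C^{1,\alpha}_{\mathrm{loc}}(\R^2)$ for every $\alpha \in (0,1)$ --- a statement that does not depend on the regularity of the support --- or, if a self-contained argument is preferred, by a direct dominated-convergence estimate on the displayed formula for $\phi_{x_2}$ near $(1,0)$, using the same $|\vy-(1,0)|^{-1}$ bound together with the analogous harmless bound for the reflected kernel.
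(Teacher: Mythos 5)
Your proposal is correct and follows essentially the same route as the paper: the paper also obtains $c=\lim_{x\to 1^-}\phi(x,f(x))/f(x)=\phi_{x_2}(1,0)$ from \eqref{eqt:solution_condition} using $\phi\in C^1(\mathbb{H})$ by elliptic regularity, and then reads off the explicit integral formulas. Your mean-value-theorem step and the dominated-convergence justification of continuity of $\phi_{x_2}$ up to $(1,0)$ simply make explicit what the paper leaves implicit, and your evaluation of the kernel at $(1,0)$ and of the inner $y_2$-integral matches the paper's formulas.
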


\begin{proof}
By assumption, $f$ is continuous on $[-1,1]$, strictly positive on $(-1,1)$, and vanishes at $x=\pm1$. We can thus use \eqref{eqt:solution_condition} to compute that
\[c= \lim_{x\rightarrow 1-0}\frac{\phi(x,f(x))}{f(x)} = \phi_{x_2}(1,0).\]
We have used the fact that $\phi\in C^1(\mathbb{H})$ by the elliptic regularity theory applied to \eqref{eqt:phi_definition}. The second claim then follows immediately from \eqref{eqt:Gamma_para}.
\end{proof}

\subsection{Main result} We can now state our main result under the preceding setups.

\begin{theorem}\label{thm:main_formal}
There exists a simply connected domain $D \subset \R^2$ with positive area and $C^1$ boundary, centered at the origin and symmetric with respect to both axes, such that the followings hold. 

Let $\Omega = D\cap \mathbb{H}$ and let $c=c_\Omega$ be given in \eqref{eqt:c_definition}. Then $\om_\Omega = \mathbf{1}_\Omega$ solves \eqref{eqt:steady_2D_Euler_half} with $c$. As a result, $\om_D = \mathrm{sgn}(x_2)\cdot \mathbf{1}_D$ solves \eqref{eqt:steady_2D_Euler} with the same $c$.

Let $\Gamma = \partial D\cap \mathbb{H}$. Then $\Gamma$ is given by \eqref{eqt:Gamma_para} with some even function $f\in C[-1,1]\cap C^1(-1,1)$ satisfying the followings:
\begin{itemize}
\item $f(x)=f(-x)>0$ for $x\in(-1,1)$, and $f(1)=f(-1)=0$.
\item $f$ is locally analytic in $(-1,1)$.
\item $f'(0)=0$, $f'(x)<0$ for $x\in(0,1)$, and $\lim_{x\rightarrow 1^-}f'(x)=-\infty$. Moreover, it holds that
\[ - f'(x)\simeq x,\quad \text{for $x\in[0,1/2]$}, \]
and 
\[- f'(x)\simeq \log\left(1+\frac{1}{1-x}\right),\quad \text{for $x\in[1/2,1)$}.\]
Here, $a\simeq b$ means there are some absolute constants $C_1,C_2>0$ such that $C_1b\leq a\leq C_2b$.
\end{itemize}
\end{theorem}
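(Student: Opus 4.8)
The plan is to follow the fixed-point strategy sketched in the introduction, which I now lay out in more detail. The starting point is the nonlinear scalar equation \eqref{eqt:solution_condition}, namely $\phi(x,f(x)) = c(f)\,f(x)$ for $x\in[-1,1]$, with $c(f)$ given explicitly by Lemma \ref{lemma:Gamma_c}. Since $\phi$ depends on $f$ through the domain $\Omega$, this is a genuinely nonlinear and nonlocal equation for $f$. My first step is to rewrite it as a fixed-point equation $f = \mtx{R}(f)$. The natural candidate is to solve \eqref{eqt:solution_condition} for the ``output height'' at each $x$: given a trial profile $g$, compute $\phi = \phi[g]$ from \eqref{eqt:phi_definition} and $c = c(g)$, and then define $(\mtx{R}g)(x)$ to be the value $h$ solving $\phi(x,h) = c\,h$ — or, more robustly, a carefully chosen explicit expression extracted from the structure of $\phi$ near the $x_1$-axis and near $x=\pm1$ so that the map has the right monotonicity and boundary behavior. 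One must check $\mtx{R}$ is well-defined: this uses $\phi\in C^1(\mathbb{H})$, the sign of $\phi_{x_2}$, and a uniqueness-of-root argument (e.g. $h\mapsto \phi(x,h)-ch$ is eventually decreasing).

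The second step is to identify a function class $\mathbb{D}\subset C[-1,1]$ — even functions with $f(\pm1)=0$, $f>0$ on $(-1,1)$, monotone on $[0,1]$, and two-sided bounds $-f'(x)\simeq x$ on $[0,1/2]$ and $-f'(x)\simeq\log(1+\frac{1}{1-x})$ on $[1/2,1)$ — that is convex, compact in a suitable topology, and invariant under $\mtx{R}$. Establishing $\mtx{R}(\mathbb{D})\subseteq\mathbb{D}$ is where the analysis lives: one needs sharp estimates on $\phi[g]$ and its $x_2$-derivative for $g\in\mathbb{D}$, separating the contribution of the image-charge kernel $\ln|\bar{\vx}-\vy|$ (smooth, responsible for the $O(x)$ slope near the center) from the singular $\ln|\vx-\vy|$ piece (responsible for the logarithmic blow-up of the slope near $x=1$, since the patch touches the axis there). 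The $\log(1+\frac{1}{1-x})$ asymptotics should come from integrating the kernel over a region of width $\sim(1-x)$ near the corner $(1,0)$. The compactness needed for Schauder will follow from an equicontinuity estimate — effectively an a priori bound on a Hölder or log-Lipschitz modulus of $\mtx{R}g$ uniform over $\mathbb{D}$.

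The third step converts the fixed-point problem into a dynamical one to gain compactness/continuity leverage: consider the ``gradient-flow-like'' evolution $\partial_t f = \mtx{R}(f) - f$ (or a suitably damped/regularized version) on $\mathbb{D}$, show $\mathbb{D}$ is forward-invariant and the flow map is continuous, and apply the Schauder fixed-point theorem to the time-$T$ map to obtain, for every $T>0$, a $T$-periodic orbit $f_T\in\mathbb{D}$. Then in the fourth step one lets $T\to 0$: the uniform bounds defining $\mathbb{D}$ give a convergent subsequence $f_{T_k}\to f_*$, and because the periodic orbits satisfy $f_T(T)=f_T(0)$ with controlled $\int_0^T(\mtx{R}(f_T)-f_T)\,dt$, one shows $\mtx{R}(f_*)=f_*$ in the limit. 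Finally, with a genuine fixed point $f_*$ in hand, the qualitative conclusions of Theorem \ref{thm:main_formal} are read off: positivity, evenness, and the slope asymptotics are inherited from membership in $\mathbb{D}$, while $C^1$ smoothness across the axis and local analyticity of $f_*$ on $(-1,1)$ follow by bootstrapping the fixed-point relation $\phi(x,f_*(x))=c\,f_*(x)$ — once $f_*$ is known to be Lipschitz, $\phi$ gains regularity, hence so does $f_*$, and an implicit-function/analytic-continuation argument (away from the axis, where the kernels are analytic in the relevant variables) upgrades this to real-analyticity; the vertical tangent at $x=\pm1$ is exactly the $-f_*'(x)\to-\infty$ statement.

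I expect the main obstacle to be the second step: proving that a single, explicitly chosen map $\mtx{R}$ simultaneously preserves \emph{all} the constraints defining $\mathbb{D}$ — in particular that it reproduces both the linear slope bound near the center and the logarithmic slope blow-up near the contact point $x=\pm1$, and that it keeps the profile monotone. The competition between the attractive near-field of the singular kernel and the repulsive image charge has to be controlled with matching upper and lower constants, and near $x=1$ the kernel is genuinely singular at the moving boundary point, so the estimates are delicate. A secondary difficulty is ensuring the flow in step three stays continuous and invariant despite $\mtx{R}$ being nonlocal and only finitely smoothing; a mild mollification of the evolution, removed in the $T\to0$ limit, may be needed, and one must check the limit still solves the original (unmollified) fixed-point equation.
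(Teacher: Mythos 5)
Your roadmap coincides with the architecture the paper actually uses (the implicit map $\mtx{R}$ obtained by solving $\phi(x,h;f)=c(f)\,h$ in $h$, a convex compact set $\mathbb{D}$, the flow $\partial_t f=\mtx{R}(f)-f$, Schauder applied to the time-$T$ solution map, and $T\to0$), but the two steps you yourself flag as the main obstacles are precisely where your plan, as written, would not go through. You require $\mtx{R}(\mathbb{D})\subseteq\mathbb{D}$ for a set $\mathbb{D}$ whose definition already encodes the two-sided slope asymptotics $-f'\simeq x$ and $-f'\simeq\log(1+1/(1-x))$. The paper explicitly states that it could not construct an $\mtx{R}$-invariant set, and this is the entire reason the flow is introduced: the barrier lemmas (Lemmas \ref{lem:upper_barrier} and \ref{lem:lower_barrier}) only show that if $f$ lies between the barriers and touches $\min\{v_\Lambda,M\}$ (resp.\ $u_\lambda$) at a single point $x_*$, then $\mtx{R}(f)(x_*)$ lies strictly below (resp.\ above) the barrier at that point; this pointwise strict inequality suffices for forward invariance of $\mathbb{D}$ under the ODE, but is much weaker than invariance under $\mtx{R}$ itself. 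Note also an internal inconsistency: if the invariance you postulate were available, Schauder would apply directly to $\mtx{R}$ and your steps three and four would be superfluous. Moreover, the paper's $\mathbb{D}$ contains no derivative constraints at all — only the barriers $u_\lambda\le f\le\min\{v_\Lambda,M\}$ and a uniform $C^{1/2}$ bound, the latter propagated along the flow by a differential inequality — and the asymptotics of $f'$ are proven only a posteriori for a fixed point, from quantitative two-sided bounds on $F_{x_1}$ and $F_{x_2}$ evaluated at $(x,f(x))$ (Lemmas \ref{lem:Fx1_estimate}--\ref{lem:Fx2_estimate_2} and Theorem \ref{thm:C1_regularity}). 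Building those asymptotics into $\mathbb{D}$ and asking the map or the flow to preserve them with fixed constants is a strictly harder statement that neither you nor the paper establishes.

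The second genuine gap is the regularity endgame. Local analyticity of $\Gamma$ does not follow from "bootstrapping the fixed-point relation" together with analyticity of the kernels away from the axis: $\Gamma$ is a free boundary for the unstable problem $-\Delta\psi=\mathbf{1}_{\{\psi>0\}}$, and the paper must invoke a free-boundary regularity theorem of Kinderlehrer--Nirenberg type, whose hypotheses require showing that $\nabla\psi\neq0$ on $\Gamma$ away from the axis and that $\psi$ is $C^2$ up to $\Gamma$ from each side; this in turn rests on first upgrading $f$ to $C^{1,1}_{loc}$ and then proving a delicate principal-value estimate for $\nabla^2\phi$ near $\Gamma$ (Lemma \ref{lem:Gamma_local_estimate}). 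Your "implicit-function/analytic-continuation" step stops well short of this. Two smaller points: uniqueness and continuity of the flow map are obtained in the paper from the log-Lipschitz (Osgood-type) continuity of $\mtx{R}$ on $\mathbb{D}$, not from the mollification you suggest (which would then require a nontrivial removal argument), and existence of the flow is built by Euler polygons inside the barrier-plus-$C^{1/2}$ framework. In short, your outline matches the paper's strategy at the level of its introduction, but the decisive mechanisms — replacing invariance by the barrier/flow argument, and supplying the free-boundary regularity input for analyticity — are missing from the proposal.
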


Theorem \ref{thm:main_formal} is a complete version of Theorem \ref{thm:main_informal} that provides more detailed characterizations of the patch boundary. From the estimates of $f'$ we can see that the shape of $f$ is smooth and simple, and it behaves asymptotically like 
\[f(x) - f(0) \simeq -x^2\quad \text{for $x\sim 0$},\quad \text{and}\quad f(x)\simeq (1-x)\log\left(1+\frac{1}{1-x}\right)\quad \text{for $x\sim 1$}.\]  
In particular, the fact that $\lim_{x\rightarrow 1^-}f'(x)=-\infty$ implies the curve $\Gamma$ is perpendicular to the $x_1$-axis at the two endpoints $(\pm 1,0)$. However, the vertical behavior of $(1-x)\log(1+1/(1-x))$ at $x=1$ is not visually obvious as demonstrated in Figure \ref{fig:dipole}(a). We want to emphasize again that the contour of the contiguous vortex dipole plotted in Figure \ref{fig:dipole}(a) is obtained by an accurate numerical computation. Our numerical method will be introduced in Section \ref{sec:numerical}. 

The rest of the paper is dedicated to proving Theorem \ref{thm:main_formal} based on a fixed-point method. We will first reformulate \eqref{eqt:solution_condition} into a fixed-point problem of some nonlinear map. After establishing continuity, compactness, and other necessary properties of this nonlinear map, we will use the Schauder fixed-point theorem to prove the existence of a function $f$ that solves \eqref{eqt:solution_condition}. Finally, the regularity of $f$ will be studied using some estimates of the nonlinear map and classic elliptic regularity theory.

As a remark, it is strongly suggested by our numerical experiments (described in Section \ref{sec:numerical}) that this contiguous vortex-patch solution is unique under the normalization condition $D\cap \{(x_1,0): x_1\in\R\} = \{(x_1,0):x_1\in[-1,1]\}$, and that the support $D$ (or $\Omega$) is in fact a convex domain. However, we have not been able to prove these conjectures.

\section{A fixed-point formulation}\label{sec:fixed_point}
In this section, we reformulate our problem into an equivalent fixed-point problem of some nonlinear map. This map will be constructed by the implicit function theorem. In the next section, we will prove the existence of a fixed point using the Schauder fixed-point theorem.

Recall that our goal is to find a suitable continuous even function $f$ on $[-1,1]$ such that 
\begin{equation}\label{eqt:solution_condition_2}
\phi(x,f(x);f) - c(f)f(x) \equiv 0,\quad x \in[-1,1],
\end{equation}
where 
\begin{equation}\label{eqt:phi_definition_2}
\phi(\vx;f) := \frac{1}{2\pi}\int_{-1}^1\idiff y_1\int_0^{f(y_1)}\ln\frac{|\bar{\vx}-\vy|}{|\vx-\vy|}\idiff y_2,
\end{equation}
and
\begin{equation}\label{eqt:c_definition_2}
c(f) := \phi_{x_2}(1,0;f) = \frac{1}{\pi}\int_{-1}^1\idiff y_1\int_0^{f(y_1)}\frac{y_2}{(1-y_1)^2+y_2^2}\idiff y_2.
\end{equation}
We provide in Appendix \ref{apx:formulas} some useful alternative expressions of $\phi(\vx;f)$ and its derivatives that will be constantly used in the following.

\subsection{Basic function spaces} 
We shall restrict our search for the solution within some proper function spaces. For the underlying Banach space, we define
\[
\mathbb{V} := \{f\in C[-1,1]:\ f(x) = f(-x),\ f(1)=f(-1)=0\},
\]
endowed with the $L^\infty$-norm. In particular, we will study our fixed-point problem in some subsets of $\mathbb{V}$. Define 
\begin{equation}\label{eqt:M0_definition}
\mathbb{M}_0 := \{f\in \mathbb{V}:\ f(x)> 0\ \text{for}\ x\in(-1,1),\ \text{$f$ is non-increasing on $[0,1]$} \}.
\end{equation}
Sometimes we will work with functions in
\begin{equation}\label{eqt:M1_definition}
\mathbb{M}_1 := \{f\in \mathbb{M}_0\cap C^1(-1,1):\ f'(0)=0,\ f'(x)<0\ \text{for}\ x\in(0,1)\},
\end{equation}
which is dense in $\mathbb{M}_0$ under the $L^\infty$-norm. 

Note that neither $\mathbb{M}_0$ nor $\mathbb{M}_1$ is closed or compact in the $L^\infty$-topology. To employ the Schauder fixed-point theorem, eventually we will need to consider a smaller function set $\mathbb{D}$ that is convex, closed, and compact in $\mathbb{V}$. We delay the definition of this $\mathbb{D}$ until we finish proving some crucial properties of the nonlinear map to be constructed below.

\subsection{An implicit nonlinear map} 
Let us introduce a key quantity that will be the focus of our study. For a function $f\in \mathbb{M}_0$, define 
\begin{equation}\label{eqt:F_definition}
F(x_1,x_2;f) := \frac{\phi(x_1,x_2;f)}{x_2} - c(f),
\end{equation}
where $\phi(\vx;f)$ and $c(f)$ are given in \eqref{eqt:phi_definition_2} and \eqref{eqt:c_definition_2}, respectively. If $f$ is a solution to \eqref{eqt:solution_condition_2}, then it should also solve 
\[F(x,f(x);f)=0,\quad x\in[-1,1].\]
This inspires us to consider the following implicit iteration: given a function $f\in \mathbb{M}_0$, compute $F(\vx;f)$, and then find a new function $\tilde{f}$ on $[-1,1]$ such that
\begin{equation}\label{eqt:implicit_relation}
F(x,\tilde f(x);f)=0,\quad x\in[-1,1].
\end{equation}
If such $\tilde f$ is well-defined and unique, then this procedure defines an implicit update $f\mapsto\tilde f$, denoted by $\tilde f = \mtx{R}(f)$. We shall prove that this implicit map $\mtx{R}$ is indeed well-defined for all $f\in \mathbb{M}_0$. We remark that, this idea of finding the $0$-level set was used, for example, in \cite{childress2018eroding} as an implicit iteration scheme to numerically compute the profile of a contiguous vortex-patch dipole.

We first characterize the behavior of $F$ on the $x_1$-axis.

\begin{lemma}\label{lem:F_onR}
For any $f\in \mathbb{M}_0$, $F(\pm1,0;f)=0$ and $F_{x_1}(x,0;f)< 0$ for $x> 0$, $x\neq1$. As a result, $F(x,0;f)>0$ for $x\in(-1,1)$, and $F(x,0;f)<0$ for $x\notin[-1,1]$.
\end{lemma}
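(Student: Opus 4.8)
The plan is to reduce the entire statement to a single monotonicity property of the one-variable function $g(x_1):=\phi_{x_2}(x_1,0;f)$. On the $x_1$-axis one has $\phi(x_1,0;f)=0$ identically, because there $\bar{\vx}=\vx$ and the logarithm in \eqref{eqt:phi_definition_2} vanishes; hence, reading $F(x_1,0;f)$ as $\lim_{x_2\to0^+}F(x_1,x_2;f)$, we get $F(x_1,0;f)=g(x_1)-c(f)$, and $c(f)=\phi_{x_2}(1,0;f)=g(1)$ by \eqref{eqt:c_definition_2}. Thus $F(x_1,0;f)=g(x_1)-g(1)$. The substitution $x_1\mapsto-x_1$, $y_1\mapsto-y_1$ in \eqref{eqt:phi_definition_2}, together with $f$ being even, shows $\phi(\cdot,x_2;f)$ and hence $g$ are even; in particular $F(\pm1,0;f)=g(1)-g(1)=0$. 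Since $\phi\in C^1(\overline{\mathbb H})$ by elliptic regularity (as used in the proof of Lemma~\ref{lemma:Gamma_c}), $g$ is continuous on $\R$. Therefore, once we establish $g'(x_1)<0$ for $x_1>0$, $x_1\neq1$, we conclude that $g$ is strictly decreasing on $[0,1]$ and on $[1,\infty)$, so $F(x_1,0;f)=g(x_1)-g(1)$ is $>0$ on $[0,1)$ and $<0$ on $(1,\infty)$; the behavior on $(-1,0)$ and on $(-\infty,-1)$ follows by evenness. So everything reduces to this derivative bound.

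\textbf{The derivative bound via layer-cake slicing.}
To prove $g'<0$ on $(0,\infty)\setminus\{1\}$ I would rewrite $g$ by a layer-cake (coarea) slicing of $\Omega$, because the integrands obtained by naively differentiating the one-variable formula $g(x_1)=\tfrac{1}{2\pi}\int_{-1}^1\ln(1+f(y_1)^2/(x_1-y_1)^2)\,dy_1$ or the area formula $g(x_1)=\tfrac{1}{\pi}\int_\Omega \tfrac{y_2}{(x_1-y_1)^2+y_2^2}\,dy$ are not sign-definite — each changes sign across $y_1=x_1$. Slicing $\Omega$ at each height $h>0$ and applying Tonelli (the integrand is nonnegative on $\Omega$) gives
\[
g(x_1)=\frac{1}{\pi}\int_0^{\infty} h\left(\int_{A_h}\frac{dy_1}{(x_1-y_1)^2+h^2}\right)dh,\qquad A_h:=\{\,y_1\in[-1,1]: f(y_1)>h\,\}.
\]
Here is where the hypotheses on $\Omega$ enter: since $f$ is even and non-increasing on $[0,1]$, each superlevel set $A_h$ is an interval symmetric about the origin, $A_h=(-a_h,a_h)$ with $a_h=\sup\{y_1>0:f(y_1)>h\}$, and $a_h>0$ precisely when $h\in(0,f(0))$. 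Differentiating in $x_1$ and evaluating the elementary inner integral then yields
\[
g'(x_1)=\frac{1}{\pi}\int_0^{\infty} h\left(\frac{1}{(x_1+a_h)^2+h^2}-\frac{1}{(x_1-a_h)^2+h^2}\right)dh .
\]
For $x_1>0$ and any $a_h\geq0$ we have $(x_1+a_h)^2\geq(x_1-a_h)^2$, with equality iff $a_h=0$; hence the integrand is $\leq0$ and strictly negative on the positive-measure set $h\in(0,f(0))$, so $g'(x_1)<0$. I would justify differentiating under the integral by noting that the $h$-integral is supported in $[0,f(0)]$ and, for $x_1\neq1$, the $h$-integrand of $g'$ is $O(h)$ as $h\to0^+$; at $x_1=1$ one has $a_h\to1$ as $h\to0^+$ and this integral may diverge to $-\infty$, which is exactly why the point $x_1=1$ is excluded.

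\textbf{Expected main obstacle.}
I expect the main obstacle to be the structural choice made in the second step rather than any hard estimate: one must avoid differentiating the obvious integral representations of $g$ (whose integrands genuinely change sign) and instead rearrange the $\Omega$-integral into symmetric superlevel slices, which is what converts the hypotheses ``$f$ even'' and ``$f$ non-increasing on $[0,1]$'' into the transparent comparison $(x_1+a_h)^2\geq(x_1-a_h)^2$. Once that representation is found, the sign of $g'$ is immediate. The remaining pieces — evenness and continuity of $g$, the elementary evaluation of the inner integral, the dominated-convergence justification for differentiating under the integral sign, and the deduction of the ``as a result'' clause from $F(\pm1,0;f)=0$ together with $g'<0$ on $(0,1)\cup(1,\infty)$ — are all routine.
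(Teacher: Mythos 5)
Your proposal is correct, and while the overall skeleton (reduce to the one-variable function $h(x)=\phi_{x_2}(x,0;f)-c(f)$, use evenness, note $h(1)=0$, and deduce the sign of $F(\cdot,0;f)$ from strict monotonicity on $[0,1]$ and $[1,\infty)$) coincides with the paper's, your proof of the key derivative bound is genuinely different. The paper differentiates the one-dimensional formula $h(x)=\frac{1}{2\pi}\int_{-1}^1\ln\frac{(x-y)^2+f(y)^2}{(x-y)^2}\,dy-c(f)$ directly, obtaining a principal-value integral $\frac{1}{\pi}\mathrm{P.V.}\int_{-1}^1\frac{1}{y-x}\cdot\frac{f(y)^2}{(x-y)^2+f(y)^2}\,dy$; it then discards the one-sided piece over $[-1,2x-1]$, symmetrizes in $y$ about $x$, and uses $f(x+\tau)\le f(x-\tau)$ (evenness plus monotonicity) to get strict negativity. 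You instead slice $\Omega$ horizontally, exploit that the superlevel sets of an even, non-increasing profile are symmetric intervals $(-a_h,a_h)$, and differentiate the explicit inner integral to get the pointwise comparison $\frac{1}{(x+a_h)^2+h^2}<\frac{1}{(x-a_h)^2+h^2}$ for $x,a_h>0$; the hypotheses on $f$ enter through the geometry of the slices rather than through a symmetrized singular integral. Your route avoids principal values altogether and makes the sign manifest; its only technical cost is the dominated-convergence justification for differentiating under the $h$-integral, which must be made uniform for $x$ in a neighborhood avoiding $1$ (this is precisely where $x\neq1$ is used, as you note: $a_h\to1$ as $h\to0^+$, so $|x-a_h|$ stays bounded below only for $x\neq1$), and the harmless caveat that $A_h$ may include its endpoints — a null set for the integral. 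Both arguments work directly for all $f\in\mathbb{M}_0$ without approximation by $\mathbb{M}_1$, and both yield strictness because the negative contribution lives on the positive-measure set $h\in(0,f(0))$.
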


\begin{proof} 
Define $h(x) = F(x,0;f)=\phi_{x_2}(x,0;f)-c(f)$. Since $h(x)$ is an even function of $x$, we only need to consider $x\geq 0$. By the formula \eqref{eqt:phi_x2}, we have
\[
h(x) = \frac{1}{\pi}\int_{-1}^1\idiff y_1\int_0^{f(y_1)}\frac{y_2}{(x-y_1)^2+y_2^2}\idiff y_2 -c(f) =\frac{1}{2\pi}\int_{-1}^1\ln\frac{(x-y)^2+f(y)^2}{(x-y)^2}\idiff y -c(f).
\]
It is clear that $h(1) =c(f) -c(f) = 0$. Moreover, for any $x\in(0,1)$, we have
\begin{align*}
h'(x) &= \frac{1}{\pi}\mathrm{P.V.}\int_{-1}^1 \frac{1}{y-x}\cdot \frac{f(y)^2}{(x-y)^2+f(y)^2}\idiff y \\
&\leq \frac{1}{\pi}\mathrm{P.V.}\int_{2x-1}^{1}\frac{1}{y-x}\cdot \frac{f(y)^2}{(x-y)^2+f(y)^2}\idiff y\\
&= \frac{1}{\pi}\mathrm{P.V.}\int_{x-1}^{1-x}\frac{1}{\tau}\cdot \frac{f(\tau+x)^2}{\tau^2+f(\tau+x)^2}\idiff \tau\\
&= \frac{1}{\pi}\int_0^{1-x}\frac{1}{\tau}\cdot \left(\frac{f(x+\tau)^2}{\tau^2+f(x+\tau)^2} - \frac{f(x-\tau)^2}{\tau^2+f(x-\tau)^2}\right)\idiff \tau\leq 0.
\end{align*}
We have used the assumption that $f$ is non-increasing on $[0,1]$, which implies
\[\frac{f(x+\tau)^2}{\tau^2+f(x+\tau)^2} - \frac{f(x-\tau)^2}{\tau^2+f(x-\tau)^2}\leq 0,\quad \tau\in[0,1-x].\]
Note that all inequalities above are simultaneously equality only if $f\equiv 0$, which cannot happen for $f\in\mathbb{M}_0$. This means $h'(x)<0$ for $x\in(0,1)$. One can similarly show that $h'(x)<0$ for $x> 1$. Hence, $h(x) >0$ for $x\in(-1,1)$ and $h(x)<0$ for $x\notin[-1,1]$.
\end{proof}

Next, we study the behavior of $F$ away from the $x_1$-axis.

\begin{lemma}\label{lem:F_partial}
For any $f\in \mathbb{M}_0$, $F_{x_1}(\vx;f)$ and $F_{x_2}(\vx;f)$ are continuous in $\vx=(x_1,x_2)$ for $x_1\in\R, x_2>0$. Moreover, $F_{x_1}(0,x_2;f)=0$ for all $x_2> 0$, and $F_{x_1}(x_1,x_2;f)<0$, $F_{x_2}(x_1,x_2;f)<0$ for all $x_1,x_2>0$.
\end{lemma}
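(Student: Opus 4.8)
The continuity claim and the vanishing on the $x_2$-axis are routine, so the plan concentrates on the two strict inequalities, which I would obtain from a maximum principle rather than from the explicit kernels.

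\smallskip

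\emph{Routine parts and set-up.} Since $\phi(\cdot\,;f)$ is, together with its odd reflection, the Newtonian potential on $\mathbb{H}$ of the bounded function $\mathbf{1}_\Omega$, it lies in $C^{1,\alpha}_{\mathrm{loc}}(\mathbb{H})$; hence $\phi,\phi_{x_1},\phi_{x_2}$ are continuous on $\{x_2>0\}$ and so are $F_{x_1}=\phi_{x_1}/x_2$ and $F_{x_2}=(x_2\phi_{x_2}-\phi)/x_2^2$. As $f$ is even, $\Omega$ is symmetric in $x_1$, so $\phi(\cdot,x_2;f)$ is even in $x_1$ and $F_{x_1}(0,x_2;f)=\phi_{x_1}(0,x_2;f)/x_2=0$. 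For the signs I would use the identity $-\Delta(x_2F)=-\Delta\phi=\mathbf{1}_\Omega$ in $\mathbb{H}$, i.e. $F$ solves the degenerate elliptic equation $\Delta F+\tfrac{2}{x_2}F_{x_2}=-\mathbf{1}_\Omega/x_2$ for $x_2>0$. To deal with the degeneracy at $x_2=0$ I would regard $x_2$ as the radius of a three-dimensional variable $\xi$: setting $\mathcal{F}(x_1,\xi):=F(x_1,|\xi|)$ and $\mathcal{O}:=\{(x_1,\xi)\in\R^4:\,x_1\in[-1,1],\ |\xi|\le f(x_1)\}$, one gets $-\Delta_{\R^4}\mathcal{F}=\mathbf{1}_{\mathcal{O}}/|\xi|$ with $\mathbf{1}_{\mathcal{O}}/|\xi|\in L^1_{\mathrm{loc}}(\R^4)$ and $\mathcal{F}$ locally Lipschitz, so the strong maximum principle and the Hopf lemma are available; note $\mathcal{F}\to-c(f)$ at infinity. (Equivalently one can run the arguments below directly for $\Delta+\tfrac{2}{x_2}\partial_{x_2}$ on subdomains of $\{x_2>0\}$; the lift is just convenient, and genuinely helpful only for the reflection in $x_2$.)

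\smallskip

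\emph{$F_{x_1}<0$ for $x_1,x_2>0$: moving planes in $x_1$.} Fix $\lambda>0$ and put $w=\mathcal{F}-\mathcal{F}^\lambda$ on $\Sigma_\lambda=\{x_1>\lambda\}$, where $\mathcal{F}^\lambda(x_1,\xi)=\mathcal{F}(2\lambda-x_1,\xi)$. For $x_1>\lambda>0$ one has $|2\lambda-x_1|<x_1$, so — using that $f$ is even and non-increasing on $[0,\infty)$ — the reflection carries $\mathcal{O}\cap\Sigma_\lambda$ into $\mathcal{O}$, whence $\mathbf{1}_{\mathcal{O}}(2\lambda-x_1,\xi)\ge\mathbf{1}_{\mathcal{O}}(x_1,\xi)$ on $\Sigma_\lambda$; since $|\xi|$ is unchanged by this reflection, $-\Delta_{\R^4}w\le0$ on $\Sigma_\lambda$. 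As $w=0$ on $\{x_1=\lambda\}$ and $w\to0$ at infinity, the maximum principle gives $w\le0$; the strong maximum principle upgrades this to $w<0$ in $\Sigma_\lambda$ (equality would make $F$ reflection-symmetric about $\lambda$, hence — being even — $2\lambda$-periodic in $x_1$, impossible since $F\to-c(f)$ at infinity while $F$ is non-constant by Lemma \ref{lem:F_onR}); and the Hopf lemma on $\{x_1=\lambda\}$, at a point off $\partial\mathcal{O}$ and off $\{\xi=0\}$, yields $\partial_{x_1}\mathcal{F}<0$, i.e. $F_{x_1}<0$ there. Letting $\lambda$ range over $(0,\infty)$, continuity of $F_{x_1}$ gives $F_{x_1}(x_1,x_2;f)<0$ for all $x_1,x_2>0$.

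\smallskip

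\emph{$F_{x_2}<0$ for $x_1,x_2>0$: moving planes in the radial direction.} Now reflect in $\R^4$ across $\{\xi_1=\mu\}$, $\mu>0$: let $R_\mu$ denote this reflection of $\xi$ and put $w=\mathcal{F}-\mathcal{F}_\mu$ with $\mathcal{F}_\mu(x_1,\xi):=\mathcal{F}(x_1,R_\mu\xi)$, on $\{\xi_1>\mu\}$. The weight $1/|\xi|$ is no longer reflection-invariant, but for $\xi_1>\mu>0$ one has $|R_\mu\xi|\le|\xi|$, and since $\mathcal{O}$ is the solid of revolution of the subgraph of $f$ about $\{\xi=0\}$ (hence star-shaped with respect to it) one still has $\mathbf{1}_{\mathcal{O}}(R_\mu\xi)\ge\mathbf{1}_{\mathcal{O}}(\xi)$ on $\{\xi_1>\mu\}$; multiplying, $\mathbf{1}_{\mathcal{O}}(R_\mu\xi)/|R_\mu\xi|\ge\mathbf{1}_{\mathcal{O}}(\xi)/|\xi|$, so again $-\Delta_{\R^4}w\le0$ there, with $w=0$ on $\{\xi_1=\mu\}$ and at infinity. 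As before, $w\le0$; evaluating on the $\xi_1$-axis and letting $\mu$ vary shows $F(x_1,\cdot)$ is non-increasing, and the strong maximum principle together with the Hopf lemma promote this to $F_{x_2}(x_1,x_2;f)<0$ for all $x_1,x_2>0$.

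\smallskip

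\emph{Main obstacle.} The key realization is that a direct, kernel-by-kernel estimate does not work: after exploiting both reflection symmetries of $\Omega$, the integrand defining $\phi_{x_1}$ (and, likewise, that of $\partial_{x_2}(\phi/x_2)$) is genuinely sign-indefinite — nonnegative for $\vy$ near the $x_2$-axis but negative for $\vy$ near the lateral tips of $\Omega$ when $\vx$ is close to the $x_1$-axis — so both inequalities hold only through cancellation, which forces a global PDE argument. Within it, the delicate case is $F_{x_2}$, where reflecting in the radial variable destroys the invariance of the weight $1/|\xi|$ and one must recover the needed inequality from the monotonicity of $\mathbf{1}_{\mathcal{O}}$; a minor additional point is passing from ``$\le0$'' to strict ``$<0$'' at the measure-zero set $\partial\Omega$, which I would settle using continuity of $F_{x_1},F_{x_2}$ and analyticity of $F$ off $\Gamma$.
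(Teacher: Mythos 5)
Your argument is essentially viable, but it takes a genuinely different route from the paper, and your ``main obstacle'' diagnosis is not accurate. The paper proves both strict signs by making the kernels pointwise signed after an algebraic manipulation: for $F_{x_1}$ it integrates first along the horizontal chords $y_1\in[-f^{-1}(y_2),f^{-1}(y_2)]$ (formula \eqref{eqt:phi_x1_inverse}, which is why it reduces to the dense class $\mathbb{M}_1$), obtaining $\phi_{x_1}=-\frac{1}{4\pi}\int_0^{f(0)}\ln\left(1+(\text{nonnegative})\right)\idiff y\le 0$ with strictness for $x_1>0$; for $F_{x_2}$ it integrates by parts in $y_2$ against an explicit antiderivative $K$ and checks $\partial_{x_2}K=\frac{y_2}{x_2}\eta(\cdot)>0$. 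So a ``kernel-by-kernel'' estimate does work once one symmetrizes chordwise rather than pointwise; no cancellation-free impossibility is at play. Your alternative --- lifting to $\R^4$ via $\mathcal{F}(x_1,\xi)=F(x_1,|\xi|)$, so that $-\Delta_{\R^4}\mathcal{F}=\mathbf{1}_{\mathcal{O}}/|\xi|$, then moving planes in $x_1$ (using that $f$ is even and non-increasing on $[0,1]$) and in the radial direction (using $|R_\mu\xi|\le|\xi|$ and that $\mathcal{O}$ is a radial ball bundle, so the weight moves the right way) --- is a sound global PDE argument. It has the advantage of working directly for $f\in\mathbb{M}_0$, avoiding the paper's approximation step ``by density of $\mathbb{M}_1$'' whose details are omitted; its drawback is that it is purely qualitative, whereas the paper's computation produces the representation \eqref{eqt:Fx2_estimate_step} that is reused for the quantitative bounds in Lemmas \ref{lem:Fx2_estimate} and \ref{lem:Fx2_estimate_2}.

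Two loose ends you should tighten. First, strictness on $\partial\Omega$: restricting Hopf's lemma to points off $\partial\mathcal{O}$ is unnecessary --- the boundary-point lemma needs only that $w$ is continuous and distributionally subharmonic in the half-space, $w<0$ inside, and $w$ is differentiable at the boundary point, which holds because $F\in C^1(\{x_2>0\})$; by contrast your proposed patch (``continuity and analyticity off $\Gamma$'') only yields $F_{x_1}\le 0$, $F_{x_2}\le 0$ on $\Gamma$, not the strict inequalities. Second, if you use the $\R^4$ lift you must justify that $-\Delta_{\R^4}\mathcal{F}=\mathbf{1}_{\mathcal{O}}/|\xi|$ holds across the axis $\{\xi=0\}$ (a removability/capacity argument using that $\mathcal{F}$ is bounded and continuous there), or else run the $x_1$-reflection in two dimensions on $\Sigma_\lambda\cap\{x_2>0\}$ and control the extra boundary piece $\{x_2=0\}$ by Lemma \ref{lem:F_onR}; relatedly, ``locally Lipschitz'' up to $x_2=0$ is more than you can claim from $\phi\in C^{1,\alpha}_{loc}$ (only $C^{0,\alpha}$ there) and more than you need. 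With these repairs the proof goes through.
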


\begin{proof}
Write $F(\vx)=F(\vx;f)$ and $\phi(\vx)=\phi(\vx;f)$. It is clear that $\phi_{x_1}(\vx)$ and $\phi_{x_2}(\vx)$ are continuous in $\vx$ by the elliptic regularity theory applied to \eqref{eqt:phi_definition} (or simply by their integral expressions in \eqref{eqt:phi_x1} and \eqref{eqt:phi_x2}). The continuity of $F_{x_1}(\vx)$ and $F_{x_2}(\vx)$ in $\vx$ for $x_1\in\R, x_2>0$ thus follows. 

Since the set $\mathbb{M}_1$ \eqref{eqt:M1_definition} is dense in $\mathbb{M}_0$ \eqref{eqt:M0_definition}, we only need to prove the theorem for $f\in \mathbb{M}_1$. For any $f\in \mathbb{M}_1$, since $f'(x)<0$ for $x\in (0,1]$, its inverse function $f^{-1}(s)$ is well-defined for $s\in [0,f(0)]$.

Recall the definition \eqref{eqt:F_definition} of $F$. To prove $F_{x_1}(x_1,x_2)\leq 0$, we only need to show that $\phi_{x_1}(x_1,x_2)\leq 0$. In fact, for $x_1\geq 0$, $x_2>0$, we can invoke \eqref{eqt:phi_x1_inverse} to obtain
\begin{align*}
\phi_{x_1}(x_1,x_2) &= -\frac{1}{2\pi}\int_0^{f(0)}\frac{1}{2}\ln\frac{\big((x_1-f^{-1}(y))^2+(x_2+y)^2\big)\big((x_1+f^{-1}(y))^2+(x_2-y)^2\big)}{\big((x_1-f^{-1}(y))^2+(x_2-y)^2\big)\big((x_1+f^{-1}(y))^2+(x_2+y)^2\big)}\idiff y\\
&= -\frac{1}{4\pi}\int_0^{f(0)}\ln\left(1+\frac{16x_1x_2f^{-1}(y)y}{\big((x_1-f^{-1}(y))^2+(x_2-y)^2\big)\big((x_1+f^{-1}(y))^2+(x_2+y)^2\big)}\right)\idiff y\\
&\leq 0.
\end{align*}
The last inequality above is an equality if and only if $x_1=0$. Hence, $F_{x_1}(x_1,x_2)<0$ for all $x_1,x_2>0$. For general $f\in \mathbb{M}_0$, the same claim follows by approximation or by generalization of $f^{-1}$ and \eqref{eqt:phi_x1_inverse} for monotone functions. We omit the details.

As for $F_{x_2}$, we first compute that 
\begin{align*}
\frac{\phi(x_1,x_2)}{x_2} &= \frac{1}{2\pi}\int_{-1}^1\idiff y_1\int_0^{f(y_1)}\frac{1}{y_2}\cdot \frac{y_2}{x_2}\ln\frac{|\bar{\vx}-\vy|}{|\vx-\vy|}\idiff y_2\\
&= \frac{1}{2\pi}\int_{-1}^1\idiff y_1\int_0^{f(y_1)}\frac{1}{y_2}\cdot \left(-\partial_{y_2}K(\vx;\vy)\right) \idiff y_2\\
&= -\frac{1}{2\pi}\int_{-1}^1 \frac{K(\vx;y_1,f(y_1))}{f(y_1)}\idiff y_1 - \frac{1}{2\pi}\int_{-1}^1\idiff y_1\int_0^{f(y_1)}\frac{1}{y_2^2}\cdot K(\vx;\vy)\idiff y_2,
\end{align*}
where 
\begin{align*}
K(\vx;\vy) &:= \frac{x_2^2 - y_2^2 - (x_1-y_1)^2}{2x_2}\ln\frac{|\bar{\vx}-\vy|}{|\vx-\vy|} - y_2 \\
&\qquad + (x_1-y_1)\arctan\left(\frac{x_2+y_2}{x_1-y_1}\right) - (x_1-y_1)\arctan\left(\frac{x_2-y_2}{x_1-y_1}\right).
\end{align*}
Note that 
\[\frac{K(\vx;\vy)}{y_2^3} = -\frac{2}{3}\cdot\frac{1}{(x_1-y_1)^2+x_2^2} + O(y_2),\]
and 
\begin{align*}
\partial_{x_2}K(\vx;\vy) &= \frac{(x_1-y_1)^2+x_2^2+y_2^2}{2x_2^2}\ln\frac{|\bar{\vx}-\vy|}{|\vx-\vy|} -\frac{y_2}{x_2}\\
&= \frac{y_2}{x_2}\left(\frac{(x_1-y_1)^2+x_2^2+y_2^2}{4x_2y_2}\ln\frac{1+\frac{2x_2y_2}{(x_1-y_1)^2+x_2^2+y_2^2}}{1-\frac{2x_2y_2}{(x_1-y_1)^2+x_2^2+y_2^2}}-1\right).
\end{align*}
Define 
\[\eta(a) = \frac{1}{2a}\ln\frac{1+a}{1-a} - 1,\quad a\in[0,1).\]
It holds that $\eta(a)>0$ for $a\in (0,1)$, since
\[
\frac{1}{2a}\ln\frac{1+a}{1-a} = \frac{1}{2a}\int_{1-a}^{1+a} \frac{1}{s}\idiff s > 1.
\]
Hence, for any $x_2,y_2>0$,
\[\partial_{x_2}K(\vx;\vy) = \frac{y_2}{x_2}\cdot \eta\left(\frac{2x_2y_2}{(x_1-y_1)^2+x_2^2+y_2^2}\right)>0\]
Then, for any $x_2>0$, we have
\[F_{x_2}(x_1,x_2) = -\frac{1}{2\pi}\int_{-1}^1 \frac{\partial_{x_2}K(\vx;y_1,f(y_1))}{f(y_1)}\idiff y_1 - \frac{1}{2\pi}\int_{-1}^1\idiff y_1\int_0^{f(y_1)}\frac{\partial_{x_2}K(\vx;\vy)}{y_2^2}\idiff y_2<0.\]
This completes the proof.
\end{proof}

With the monotone behavior of $F$ in hand, we can now justify that the implicit relation \eqref{eqt:implicit_relation} properly defines a map on $\mathbb{M}_0$.

\begin{proposition}\label{prop:implicit_existence}
For any $f\in \mathbb{M}_0$, there exists a unique function $\tilde{f}\in \mathbb{M}_1$ such that $F(x,\tilde f(x);f)=0$ for all $x\in [-1,1]$.
\end{proposition}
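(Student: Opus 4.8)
The plan is to define $\tilde f(x)$, for each $x\in[-1,1]$ \emph{separately}, as the unique zero of the one‑variable function $t\mapsto F(x,t;f)$ on $[0,\infty)$, and then to verify that the function obtained this way lies in $\mathbb{M}_1$. Fix $f\in\mathbb{M}_0$ once and for all and write $F(\vx)=F(\vx;f)$, $\phi(\vx)=\phi(\vx;f)$, $c=c(f)$; note $c>0$, since by \eqref{eqt:c_definition_2} it is the integral over $\Omega$ of a function positive off a null set and $\Omega$ has positive area. Since $\phi\in C^1(\mathbb{H})$ and $\phi(x_1,0)\equiv0$, one may write $\phi(\vx)/x_2=\int_0^1\phi_{x_2}(x_1,sx_2)\,\mathrm{d}s$, which is continuous on all of $\mathbb{H}$; hence $F$, originally defined for $x_2>0$, extends continuously to $\mathbb{H}$ with $F(x_1,0)=\phi_{x_2}(x_1,0)-c$, consistently with Lemma~\ref{lem:F_onR}. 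I will use this continuity up to $\{x_2=0\}$ freely.

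The first step is to bracket the range of $t\mapsto F(x,t)$. At $t=0$, Lemma~\ref{lem:F_onR} gives $F(x,0)>0$ for $x\in(-1,1)$ and $F(\pm1,0)=0$. For large $t$ I will show $F(x,t)<0$: since $|\bar{\vx}-\vy|\ge|\vx-\vy|$ on $\mathbb{H}$ we have $\phi\ge0$, while for $t$ large the integrand in \eqref{eqt:phi_definition_2} obeys
\[
\ln\frac{(x-y_1)^2+(t+y_2)^2}{(x-y_1)^2+(t-y_2)^2}=\ln\!\Big(1+\tfrac{4ty_2}{(x-y_1)^2+(t-y_2)^2}\Big)\le\frac{C_f}{t}
\]
uniformly in $x\in[-1,1]$ and $\vy\in\Omega$, with $C_f$ depending only on the bounded set $\Omega$; integrating gives $0\le\phi(x,t)/t\le C_f/t^2$, hence $F(x,t)\le C_f/t^2-c$. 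Thus there is a constant $T=T(f)>0$ with $F(x,t)\le-c/2<0$ for all $x\in[-1,1]$, $t\ge T$, which incidentally yields the a priori bound $\|\tilde f\|_\infty\le T$.

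Next comes the zero. By Lemma~\ref{lem:F_partial} — more precisely, by the computation of $\partial_{x_2}K$ in its proof, which imposes no condition on $x_1$ — we have $F_{x_2}(x,t)<0$ for every $x\in\R$ and $t>0$; together with continuity at $t=0$ this makes $t\mapsto F(x,t)$ strictly decreasing on $[0,\infty)$. Consequently: for $x=\pm1$ the unique zero is $t=0$, and we set $\tilde f(\pm1):=0$; for $x\in(-1,1)$, since $F(x,0)>0>F(x,T)$, the intermediate value theorem gives a zero in $(0,T)$, unique by strict monotonicity, and we let $\tilde f(x)$ be this value. Uniqueness of $\tilde f$ within $\mathbb{M}_1$ is then automatic: any $g\in\mathbb{M}_1$ has $g(x)\ge0$, so $F(x,g(x))=0$ forces $g(x)=\tilde f(x)$ for all $x$.

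It remains to verify $\tilde f\in\mathbb{M}_1$. Evenness of $\tilde f$ follows from evenness of $F(\cdot,t)$ in its first slot (a consequence of evenness of $f$), and $\tilde f>0$ on $(-1,1)$ from $F(x,0)>0=F(x,\tilde f(x))$ with monotonicity; with $\tilde f(\pm1)=0$ this puts $\tilde f\in\mathbb{V}$. For $x_0\in(-1,1)$ I apply the implicit function theorem at $(x_0,\tilde f(x_0))$: there $F$ is $C^1$ (as $\tilde f(x_0)>0$, Lemma~\ref{lem:F_partial}) and $F_{x_2}<0\neq0$, so there is a local $C^1$ solution of $F(x,g(x))=0$, which coincides with $\tilde f$ by uniqueness of the zero; thus $\tilde f\in C^1(-1,1)$ with $\tilde f'(x)=-F_{x_1}(x,\tilde f(x))/F_{x_2}(x,\tilde f(x))$. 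Since $F_{x_1}(0,t)=0$ this gives $\tilde f'(0)=0$, and since $F_{x_1},F_{x_2}<0$ at $(x,\tilde f(x))$ for $x\in(0,1)$ (Lemma~\ref{lem:F_partial}) it gives $\tilde f'(x)<0$ there; in particular $\tilde f$ is non‑increasing on $[0,1)$. The one remaining point — continuity of $\tilde f$ at the endpoints, where the implicit function theorem is unavailable — is the step I expect to be the main (mild) obstacle: monotonicity gives $\ell:=\lim_{x\to1^-}\tilde f(x)\in[0,\infty)$, and letting $x\to1^-$ in $F(x,\tilde f(x))=0$, using continuity of $F$ on $\mathbb{H}$, yields $F(1,\ell)=0$; but $F(1,t)<0$ for all $t>0$, so $\ell=0=\tilde f(1)$, and by evenness $\tilde f$ is continuous at $-1$ as well. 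Hence $\tilde f\in C[-1,1]$, is non‑increasing on $[0,1]$, lies in $\mathbb{M}_0\cap C^1(-1,1)$ and satisfies $\tilde f'(0)=0$, $\tilde f'|_{(0,1)}<0$, i.e. $\tilde f\in\mathbb{M}_1$. Besides this endpoint argument, the only genuine input beyond Lemmas~\ref{lem:F_onR}–\ref{lem:F_partial} is the large‑$t$ decay estimate $\phi(x,t)/t\to0$ forcing $F<0$ far out; neither is deep.
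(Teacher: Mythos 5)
Your proposal is correct and follows essentially the same route as the paper: combine the sign information of Lemma~\ref{lem:F_onR} with the monotonicity $F_{x_2}<0$ from Lemma~\ref{lem:F_partial} to get, for each $x$, a unique zero of $t\mapsto F(x,t;f)$, and then use the implicit function theorem for the $C^1$ properties, $\tilde f'(0)=0$, and $\tilde f'<0$ on $(0,1)$. The only difference is that you spell out two points the paper leaves implicit — the quantitative decay $\phi(x,t)/t\to 0$ behind the paper's statement $F(x_1,+\infty)=-c(f)<0$, and the continuity of $\tilde f$ at $x=\pm1$ via the monotone-limit argument — both of which are correct.
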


\begin{proof}
Write $F(x_1,x_2) = F(x_1,x_2;f)$. By Lemma \ref{lem:F_onR} and Lemma \ref{lem:F_partial}, $F(x_1,x_2) < 0$ for all $|x_1|>1$ and $x_2\geq 0$. Hence, $F(x_1,x_2)=0$ can only happen for $x_1\in[-1,1]$. 

When $x_1=1$, we have $F(1,0) = 0$ and $F_{x_2}(1,x_2)<0$ for $x_2>0$. Hence, $(1,0)$ is the unique solution of $F(x_1,x_2)=0$ on the line $x_1=1$. The same happens for $x_1=-1$.

For any fixed $x_1\in(-1,1)$, we have $F(x_1,0) >0$, $F(x_1,+\infty) = -c(f)<0$, and $F_{x_2}(x_1,x_2) <0$ for $x_2>0$. Hence, there is a unique point $(x_1,x_2)$ with $x_2>0$ such that $F(x_1,x_2)=0$. 

Therefore, we can uniquely determine a function $\tilde f(x_1)$ by the relation $F(x_1,\tilde f(x_1))=0$ for any $x_1\in [-1,1]$. Such $\tilde f$ satisfies $\tilde f(\pm 1)=0$ and $\tilde f(x)>0$ for $x\in(-1,1)$. Moreover, the Implicit Function Theorem and Lemma \ref{lem:F_partial} together guarantee that $\tilde f\in C^1(-1,1)$, 
\[\tilde f'(0) =-\frac{F_{x_1}(0,\tilde f(0))}{F_{x_2}(0,\tilde f(0))}=0,\quad \text{and}\quad \tilde f'(x) = -\frac{F_{x_1}(x,\tilde f(x))}{F_{x_2}(x,\tilde f(x))} <0, \quad x\in(0,1).\]
The lemma is thus proved. 
\end{proof}

From now on, we will denote by $\mtx{R}$ the implicit mapping $f\mapsto \tilde f$ determined by \eqref{eqt:implicit_relation}:  
\[\tilde f = \mtx{R}(f),\quad f\in \mathbb{M}_0.\]
Our task is to show that this nonlinear map $\mtx{R}$ admits a fixed point in some proper subset of $\mathbb{M}_0$, as it gives a \textit{nontrivial} solution to \eqref{eqt:solution_condition_2}. Note that the function $f\equiv0$ is obviously a trivial but undesirable solution to \eqref{eqt:solution_condition_2}. This is why we purposely exclude $f\equiv0$ from $\mathbb{M}_0$.

\section{Existence of a fixed point}\label{sec:existence}
We will use the Schauder fixed-point theorem to prove the existence of a nontrivial fixed point of $\mtx{R}$. Ideally, we should construct a suitable invariant set of $\mtx{R}$ that is convex, closed, and compact in the $L^\infty$-topology of the underlying Banach space $\mathbb{V}$. Then, the existence of a fixed point should follow by the Schauder fixed-point theorem. However, this is not how we approach this problem, as we find it difficult to construct such an invariant set due to the non-local nature of $\mtx{R}$. Instead, we choose to work with a continuous-time dynamical system induced by $\mtx{R}$,
\begin{equation}\label{eqt:dynamic_preview}
\partial_t f = \mtx{R}(f) - f,
\end{equation}
whose equilibria are fixed points of $\mtx{R}$. Firstly, we still construct a suitable function set $\mathbb{D}\subset \mathbb{V}$ that is convex, closed, and compact in the $L^\infty$-topology, based on some crucial estimates of $\mtx{R}$. Secondly, we show that the solution $f(x,t)$ of \eqref{eqt:dynamic_preview} stays in $\mathbb{D}$ for all $t\geq0$ if the initial state $f(x,0)$ lies in $\mathbb{D}$, which relies on the well constrained behavior of $\mtx{R}$ on $\mathbb{D}$. In particular, we construct an upper barrier and a lower barrier for $f(x,t)$ that are preserved by \eqref{eqt:dynamic_preview}. In view of \eqref{eqt:implicit_relation}, the choices of these barrier functions are motivated by derivative estimates for $F$ near the end points. Finally, we use the Schauder fixed-point theorem to show that \eqref{eqt:dynamic_preview} admits time-periodic solutions for any positive period, and we obtain a steady state of \eqref{eqt:dynamic_preview} (and thus a fixed point of $\mtx{R}$ in $\mathbb{D}$) by taking the period to zero. We remark that, since $f\equiv 0$ is a trivial fixed point of $\mtx{R}$, it is important to make $\mathbb{D}$ well separated from $f\equiv 0$ by construction.

\subsection{Derivative estimates of $F$} In this subsection, we work out some useful bounds for the spatial derivatives of $F(\vx;f)$ for $f\in\mathbb{M}_0$, extending the monotonicity results in Lemma \ref{lem:F_partial} to some quantitative estimates that are essential for understanding the behavior of the map $\mtx{R}$.

\begin{lemma}\label{lem:Fx1_estimate}
There is some uniform constant $C>0$ such that, for any $f\in\mathbb{M}_0$ and any $x_1\in\R, x_2>0$,
\[|F_{x_1}(x_1,x_2;f)|\leq C\min\left\{1 + \ln\left(1+\frac{f(0)}{x_2}\right)\,,\,\frac{|x_1|}{x_2}\right\}.\]
\end{lemma}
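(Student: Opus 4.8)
The starting observation is that the division by $x_2$ in $F$ is harmless. Since $F(\vx;f)=\phi(\vx;f)/x_2-c(f)$ by \eqref{eqt:F_definition}, we have $F_{x_1}(\vx;f)=\phi_{x_1}(\vx;f)/x_2$, and, by the cancellation
\[
\frac{1}{|\bar{\vx}-\vy|^2}-\frac{1}{|\vx-\vy|^2}=\frac{-4x_2y_2}{|\bar{\vx}-\vy|^2\,|\vx-\vy|^2},
\]
the factor $x_2$ is absorbed outright:
\[
F_{x_1}(x_1,x_2;f)=-\frac{2}{\pi}\int_\Omega\frac{(x_1-y_1)\,y_2}{|\bar{\vx}-\vy|^2\,|\vx-\vy|^2}\idiff\vy .
\]
Because $\phi(\cdot;f)$ is even in $x_1$, $F_{x_1}$ is odd in $x_1$, so it suffices to treat $x_1\ge0$; also recall $F_{x_1}(0,x_2;f)=0$ from Lemma \ref{lem:F_partial}. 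As in the proof of Lemma \ref{lem:F_partial}, for $f\in\mathbb{M}_1$ one may also swap the roles of $y_1$ and $f^{-1}$ (and then pass to general $f\in\mathbb{M}_0$ by approximation / a generalized inverse) to obtain the equivalent one–dimensional representation $F_{x_1}=-\tfrac{1}{4\pi x_2}\int_0^{f(0)}\ln\!\big(1+\tfrac{16x_1x_2\,f^{-1}(y_2)\,y_2}{((x_1-f^{-1}(y_2))^2+(x_2-y_2)^2)((x_1+f^{-1}(y_2))^2+(x_2+y_2)^2)}\big)\idiff y_2$, which is often more convenient. I would establish the two bounds in the minimum separately.

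For the bound by $C\big(1+\ln(1+f(0)/x_2)\big)$, split the domain of integration into a ``far'' part and a ``near'' part (e.g.\ according to whether the argument of the logarithm above is $\le1$ or $>1$, or whether $|\vx-\vy|$ exceeds a suitable multiple of $x_2$). On the far part I would use $|\bar{\vx}-\vy|^2\ge(x_2+y_2)^2\ge4x_2y_2$ to peel off a factor $\tfrac1{4x_2}$, together with $|x_1-y_1|/|\vx-\vy|^2\le\min\{1/|\vx-\vy|,\,1/|x_1-y_1|\}$, and carry out the resulting (at worst logarithmic) $y$–integral over $\Omega\subset[-1,1]\times[0,f(0)]$; the $\ln(1+f(0)/x_2)$ appears precisely from the $y_2$–range $[0,f(0)]$ weighed against $x_2$. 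On the near part, replace $\ln(1+t)$ by $C(1+\ln t)$ and use that $\int_\Omega\ln(1/|\vx-\vy|)\idiff\vy$ is only mildly (logarithmically) singular; since this region is small, it contributes a bounded amount.

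For the bound by $C|x_1|/x_2$, I would symmetrize the integral in $y_1\mapsto-y_1$ (using $f(-y_1)=f(y_1)$): the odd part of the kernel cancels exactly, and the surviving combination carries an explicit prefactor $2x_1$ — concretely, with $\psi_{x_2,y_2}(u):=u\big[(u^2+(x_2+y_2)^2)(u^2+(x_2-y_2)^2)\big]^{-1}$ one has $\psi(x_1-y_1)+\psi(x_1+y_1)=2x_1\cdot(\text{a kernel with an integrable singularity})$. After pulling out this $x_1$, the remaining integral must be shown to be $\le C/x_2$, again peeling $\tfrac1{4x_2}$ off $|\bar{\vx}-\vy|^2$ and treating the contribution of $\vy$ near $\vx$ with care. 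Finally, for $|x_1|$ large (say $|x_1|\ge x_2\big(1+\ln(1+f(0)/x_2)\big)$) the $|x_1|/x_2$ bound is immediate from the logarithmic bound just proved, so the two estimates combine into $|F_{x_1}|\le C\min\{\cdots\}$.

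The part I expect to be delicate is the local analysis near the set $\{\vx\in\Gamma\}$, i.e.\ the region $\vy\approx\vx$: there the integrand of $F_{x_1}$ has only a $|\vx-\vy|^{-1}$ singularity (hence is integrable in the plane), but the crude pointwise majorizations lose too much and produce either an extra factor $1/x_2$ in the first estimate or a superlinear (in $x_1$) rate in the second. The cancellation between $|\bar{\vx}-\vy|^{-2}$ and $|\vx-\vy|^{-2}$, and the oddness exploited in the symmetrization, have to be used here with the $y_2$–weight kept in place (rather than bounded by $\tfrac1{4x_2}$) so that the logarithmic, respectively linear-in-$x_1$, behaviour is recovered; one also has to keep track of the $f(0)$–dependence throughout to see that the constant $C$ is indeed absolute.
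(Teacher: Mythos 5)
Your proposal sets up the right objects (the identity $F_{x_1}=\phi_{x_1}/x_2$ with the $x_2$-factor absorbed through the cancellation of the two kernels, oddness in $x_1$, and the one-dimensional representation via $f^{-1}$), but the actual estimates — which are the entire content of the lemma — are not carried out, and the steps you do describe would not deliver the stated bounds. For the first bound, peeling $|\bar{\vx}-\vy|^2\ge 4x_2y_2$ off the kernel and then majorizing $|x_1-y_1|/|\vx-\vy|^2$ pointwise produces, as you yourself concede in the last paragraph, an extra factor $1/x_2$ (already on an annulus $x_2\le|\vx-\vy|\le 1$ this route gives $\sim 1/x_2$ rather than $1+\ln(1+f(0)/x_2)$), and you offer no concrete mechanism for ``keeping the $y_2$-weight in place'' to repair this. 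The paper's proof does not need any far/near splitting: working with the one-dimensional formula \eqref{eqt:phi_x1_inverse}, it combines the integrand into a single $\ln(1+A)$ with $A=16x_1x_2yf^{-1}(y)/\big[((x_1-f^{-1})^2+(x_2-y)^2)((x_1+f^{-1})^2+(x_2+y)^2)\big]$, cancels the numerator against the \emph{second} denominator factor via $(x_1+f^{-1})^2+(x_2+y)^2\ge 4x_1f^{-1}(y)$ to get $A\le 4x_2y/(x_2-y)^2$, and then computes the resulting $y$-integral exactly after the substitution $s=y/x_2$, which is precisely where $1+\ln(1+f(0)/x_2)$ comes from. This algebraic cancellation inside the logarithm is the missing idea in your plan.

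For the second bound there is a genuine error in the one step you do assert: after symmetrizing in $y_1\mapsto -y_1$, the quantity $\psi(x_1-y_1)+\psi(x_1+y_1)$ is $2x_1$ times an averaged derivative of $\psi$, and that kernel has a $|\vx-\vy|^{-2}$-type singularity at $\vy=\vx$, which is \emph{not} absolutely integrable in the plane; so ``$2x_1\cdot(\text{a kernel with an integrable singularity})$'' is false as stated, and the remaining claim that the residual integral is $\le C/x_2$ is exactly the nontrivial estimate, left unproven. A repair along your lines would require excising a ball of radius $\sim x_1$ around $\vx$ (where the original kernel is $\lesssim|\vx-\vy|^{-1}/(4x_2)$ and contributes $\lesssim x_1/x_2$ directly) and symmetrizing only outside it — none of which appears in the proposal. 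By contrast, the paper extracts the linear-in-$x_1$ factor with no symmetrization at all: in the same $\ln(1+A)$ form it uses $(x_1+f^{-1})^2+(x_2+y)^2\ge 2x_2f^{-1}(y)$ so that $A\le 8x_1y/\big((x_1-f^{-1})^2+(x_2-y)^2\big)$, i.e.\ the prefactor $x_1$ survives from the numerator itself, and the estimate again reduces to a one-dimensional integral. In short, your write-up correctly locates where the difficulty sits (the region $\vy\approx\vx$ and the bookkeeping of the $y_2$-weight and of $f(0)$), but it neither resolves it nor substitutes the paper's cancellation trick, so as it stands the proof is incomplete.
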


\begin{proof}
We may assume $f\in\mathbb{M}_1$. We only need to consider $x_1\geq 0$, where $F_{x_1}(\vx;f) = \phi_{x_1}(\vx;f)/x_2 \leq 0$. Using \eqref{eqt:phi_x1_inverse} we obtain
\begin{align*}
-\phi_{x_1}(x_1,x_2;f) &= \frac{1}{2\pi}\int_0^{f(0)}\frac{1}{2}\ln\frac{\big((x_1-f^{-1}(y))^2+(x_2+y)^2\big)\big((x_1+f^{-1}(y))^2+(x_2-y)^2\big)}{\big((x_1-f^{-1}(y))^2+(x_2-y)^2\big)\big((x_1+f^{-1}(y))^2+(x_2+y)^2\big)}\idiff y\\
&= \frac{1}{4\pi}\int_0^{f(0)}\ln\left(1+\frac{16x_1x_2yf^{-1}(y)}{\big((x_1-f^{-1}(y))^2+(x_2-y)^2\big)\big((x_1+f^{-1}(y))^2+(x_2+y)^2\big)}\right)\idiff y.
\end{align*}
To obtain the first upper bound, we compute that 
\begin{align*}
-\frac{\phi_{x_1}(x_1,x_2;f)}{x_2} &\leq \frac{1}{4\pi}\int_0^{f(0)}\frac{1}{x_2}\ln\left(1+\frac{4x_2y}{(x_1-f^{-1}(y))^2+(x_2-y)^2}\right)\idiff y\\
&\leq  \frac{1}{4\pi}\int_0^{f(0)}\frac{1}{x_2}\ln\left(1+\frac{4x_2y}{(x_2-y)^2}\right)\idiff y\\
&=  \frac{1}{4\pi}\int_0^{f(0)/x_2}\ln\left(1+\frac{4s}{(1-s)^2}\right)\idiff s\\
&= \frac{1}{2\pi}\left(\left(1+\frac{f(0)}{x_2}\right)\ln\left(1+\frac{f(0)}{x_2}\right)+\left(1-\frac{f(0)}{x_2}\right)\ln\left|1-\frac{f(0)}{x_2}\right|\right)\\
&\leq C\left(1+ \ln\left(1+\frac{f(0)}{x_2}\right)\right).
\end{align*}
As for the second bound, we use a different estimate for the integrand to get
\begin{align*}
-\frac{\phi_{x_1}(x_1,x_2;f)}{x_2} &\leq \frac{1}{4\pi}\int_0^{f(0)}\frac{1}{x_2}\ln\left(1+\frac{8x_1y}{(x_1-f^{-1}(y))^2+(x_2-y)^2}\right)\idiff y\\
&\leq  \frac{1}{4\pi}\int_0^{f(0)}\frac{1}{x_2}\ln\left(1+\frac{8x_1y}{(x_2-y)^2}\right)\idiff y\\
&\leq  \frac{1}{4\pi}\cdot \frac{x_1}{x_2}\int_{\R}\ln\left(1+\frac{8}{s^2}\right)\idiff s \leq C\frac{x_1}{x_2}.
\end{align*}
The claimed bounds thus hold.
\end{proof}

\begin{lemma}\label{lem:Fx1_estimate_2}
Given any $f\in\mathbb{M}_0$, if there is some constant $d>0$ such that $f(x)\geq d(1-|x|)$ on $[-1,1]$, then for all $x_1\in[-1,1]$, $x_2> 0$, 
\[|F_{x_1}(x_1,x_2;f)|\geq C_d \frac{|x_1|}{1+(x_2+f(0))^2} \ln\left(1 + \frac{1}{(1-|x_1|)^2+x_2^2}\right),\]
where $C_d>0$ is a constant that only depends on $d$.
\end{lemma}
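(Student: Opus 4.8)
The plan is to start from the inverse-function representation of $\phi_{x_1}$ already used in the proof of Lemma~\ref{lem:Fx1_estimate}, convert its logarithmic integrand into a rational one via the elementary bound $\ln(1+t)\ge t/(1+t)$, and then exploit the linear lower bound $f(x)\ge d(1-|x|)$ to extract the logarithmic factor $\ln\bigl(1+1/((1-|x_1|)^2+x_2^2)\bigr)$. First note that $F(\cdot\,;f)$ is even in $x_1$ (since $\Omega$ is symmetric about the $x_2$-axis), hence so is $|F_{x_1}(\cdot\,;f)|$, and at $x_1=0$ both sides of the claimed inequality vanish; so I would assume $x_1\in(0,1]$, where $F_{x_1}=\phi_{x_1}/x_2\le 0$ by Lemma~\ref{lem:F_partial}, i.e.\ $|F_{x_1}|=-\phi_{x_1}/x_2$. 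As in Lemma~\ref{lem:Fx1_estimate} I would first take $f\in\mathbb{M}_1$ so that $g:=f^{-1}$ is a genuine decreasing function on $[0,f(0)]$ with $g(0)=1$; the hypothesis then reads $1-y/d\le g(y)\le 1$, and also $f(0)\ge d$. For general $f\in\mathbb{M}_0$ one replaces $g$ by the generalized inverse $g(y)=\sup\{x\in[0,1]:f(x)>y\}$, exactly as alluded to in the proof of Lemma~\ref{lem:Fx1_estimate}. Then \eqref{eqt:phi_x1_inverse} gives
\[
|F_{x_1}(x_1,x_2;f)|=\frac{1}{4\pi x_2}\int_0^{f(0)}\ln\left(1+\frac{16x_1x_2\,g(y)\,y}{A(y)B(y)}\right)\idiff y,
\]
where $A(y)=(x_1-g(y))^2+(x_2-y)^2$ and $B(y)=(x_1+g(y))^2+(x_2+y)^2$.

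Next I would use $\ln(1+t)\ge t/(1+t)$ for $t\ge 0$ together with the identity $A(y)B(y)+16x_1x_2g(y)y=A'(y)B'(y)$ — the same identity behind the simplification in the proofs of Lemmas~\ref{lem:F_partial} and~\ref{lem:Fx1_estimate} — where $A'(y)=(x_1-g(y))^2+(x_2+y)^2$ and $B'(y)=(x_1+g(y))^2+(x_2-y)^2$. This replaces the logarithmic integrand by $16x_1x_2g(y)y/(A'(y)B'(y))$, yielding
\[
|F_{x_1}(x_1,x_2;f)|\ge\frac{4x_1}{\pi}\int_0^{f(0)}\frac{g(y)\,y}{A'(y)B'(y)}\idiff y\ge\frac{2x_1}{\pi}\int_0^{\delta}\frac{y}{A'(y)B'(y)}\idiff y,
\]
where $\delta:=\tfrac12\min\{d,1\}\le\tfrac12 d\le f(0)$, and I restricted to $[0,\delta]$ using positivity of the integrand and $g(y)\ge 1-y/d\ge\tfrac12$ there.

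The remaining step is a one-variable estimate. On $[0,\delta]$, since $x_1\in(0,1]$, $g(y)\in[\tfrac12,1]$, and $y\le\tfrac12$, one has $B'(y)\le 4+(x_2-y)^2\le C(1+x_2^2)$ for a universal $C$; and from $|x_1-g(y)|\le (1-x_1)+y/d$ together with $y\le x_2+y$ one gets $A'(y)\le 2(1-x_1)^2+(1+2/d^2)(x_2+y)^2\le C_d\bigl((1-x_1)^2+(x_2+y)^2\bigr)$ with $C_d=2+2/d^2$. Hence
\[
|F_{x_1}(x_1,x_2;f)|\ge\frac{c\,x_1}{C_d(1+x_2^2)}\int_0^{\delta}\frac{y\idiff y}{(1-x_1)^2+(x_2+y)^2}.
\]
Using $(x_2+y)^2\le 2(x_2^2+y^2)$ and $\int_0^\delta y\,(w+y^2)^{-1}\idiff y=\tfrac12\ln(1+\delta^2/w)$ with $w:=(1-x_1)^2+x_2^2$, the integral is $\ge\tfrac14\ln(1+\delta^2/w)$; and since $w\mapsto\ln(1+\delta^2/w)/\ln(1+1/w)$ is continuous on $(0,\infty)$ with positive limits at $0$ and $\infty$, it is bounded below by some $c_\delta>0$ depending only on $d$, so this is $\ge\tfrac14 c_\delta\ln(1+1/w)$. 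Combining the three displays and using $1+(x_2+f(0))^2\ge 1+x_2^2$ gives the stated bound with $C_d>0$ depending only on $d$ (the factor $f(0)$ in the statement, in fact, only weakens what this argument proves); the case $x_1\in[-1,0)$ follows by evenness.

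I expect no serious obstacle: the only mildly delicate point is justifying \eqref{eqt:phi_x1_inverse} and the translation of $f(x)\ge d(1-|x|)$ into $g(y)\ge 1-y/d$ for general $f\in\mathbb{M}_0$ (handled by the generalized inverse, as in Lemma~\ref{lem:Fx1_estimate}), while everything else is the routine rational-integral estimate sketched above.
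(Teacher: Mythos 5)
Your proposal is correct and takes essentially the same route as the paper: both start from the inverse-function representation \eqref{eqt:phi_x1_inverse}, linearize the logarithm (you via $\ln(1+t)\ge t/(1+t)$ together with the exact identity $AB+16x_1x_2\,g(y)\,y=A'B'$, the paper via $\ln(1+a)\ge a/2$ after a crude upper bound on the denominator), exploit $f^{-1}(y)\ge 1-y/d$, and integrate a kernel of the form $y/(y^2+c\,w)$ to generate the logarithmic factor. Your variant is sound (the product identity and the restriction to $y\in[0,\tfrac12\min\{d,1\}]$ check out) and in fact yields the slightly stronger bound with $1+x_2^2$ in place of $1+(x_2+f(0))^2$, which implies the stated lemma.
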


\begin{proof}
We may assume $f\in\mathbb{M}_1$. Again, we use \eqref{eqt:phi_x1_inverse} to obtain, for $x_1\in[0,1]$, $x_2>0$,  
\begin{align*}
-\phi_{x_1}(x_1,x_2;f) &= \frac{1}{4\pi}\int_0^{f(0)}\ln\left(1+\frac{16x_1x_2yf^{-1}(y)}{\big((x_1-f^{-1}(y))^2+(x_2-y)^2\big)\big((x_1+f^{-1}(y))^2+(x_2+y)^2\big)}\right)\idiff y\\
&\geq \frac{1}{4\pi}\int_{0}^{f(0)}\ln\left(1+\frac{4x_1x_2yf^{-1}(y)}{\big((1-x_1)^2+(1-f^{-1}(y))^2+x_2^2+y^2\big)\big(1+(x_2+f(0))^2\big)}\right)\idiff y\\
&\geq \frac{1}{4\pi}\int_{0}^{f(0)}\frac{2x_1x_2yf^{-1}(y)}{\big((1-x_1)^2+(1-f^{-1}(y))^2+x_2^2+y^2\big)\big(1+(x_2+f(0))^2\big)}\idiff y\\
&\geq \frac{1}{2\pi}\int_{0}^{f(0)}\frac{x_1x_2yf^{-1}(y)}{\big((1-x_1)^2+(y/d)^2+x_2^2+y^2\big)\big(1+(x_2+f(0))^2\big)}\idiff y\\
&= \frac{d^2}{1+d^2}\cdot \frac{x_1x_2}{2\pi\big(1+(x_2+f(0))^2\big)}\int_{0}^{f(0)}\frac{yf^{-1}(y)}{y^2 + \frac{d^2}{1+d^2}\big((1-x_1)^2+x_2^2\big)}\idiff y.
\end{align*}
We have used the fact $\ln(1+a)\geq a/2$ for $a\in[0,1]$. Besides, the last inequality above holds because $1\geq f^{-1}(y)\geq 1-y/d$, which follows from the assumption $f(x)\geq d(1-x)$ for $x\in[0,1]$. Then, since $f^{-1}(y)\geq 1/2$ for $y\in[0,f(1/2)]$, we find
\begin{align*}
-F_{x_1}(x_1,x_2;f) &= -\frac{\phi_{x_1}(x_1,x_2;f)}{x_2}\\
&\geq \frac{d^2}{1+d^2}\cdot \frac{x_1}{4\pi\big(1+(x_2+f(0))^2\big)}\int_{0}^{f(1/2)}\frac{y}{y^2 + \frac{d^2}{1+d^2}\big((1-x_1)^2+x_2^2\big)}\idiff y\\
&= \frac{d^2}{1+d^2}\cdot \frac{x_1}{8\pi\big(1+(x_2+f(0))^2\big)} \ln\left(1 + \frac{(1+d^2)f(1/2)^2}{d^2\big((1-x_1)^2+x_2^2\big)}\right)\\
&\geq \frac{d^2}{1+d^2}\cdot \frac{x_1}{8\pi\big(1+(x_2+f(0))^2\big)} \ln\left(1 + \frac{1/4}{(1-x_1)^2+x_2^2}\right)\\
&\geq C_d \frac{x_1}{1+(x_2+f(0))^2} \ln\left(1 + \frac{1}{(1-x_1)^2+x_2^2}\right).
\end{align*}
The lemma is thus proved.
\end{proof}

\begin{lemma}\label{lem:Fx2_estimate}
Given any $f\in\mathbb{M}_0$, if there is some constant $d>0$ such that $f(x)\geq d(1-|x|)$ on $[-1,1]$, then for all $x_1\in[-1,1], x_2>0$, 
\[-F_{x_2}(x_1,x_2;f)\geq \frac{C_d}{\max\{1,x_2^3\}}\, ,\]
where $C_d>0$ is a constant that only depends on $d$.
\end{lemma}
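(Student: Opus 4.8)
The plan is to reduce the bound to an explicit one-variable integral inequality and then treat the regimes $x_2\gtrsim 1$ and $x_2\lesssim 1$ separately. Recall from the proof of Lemma~\ref{lem:F_partial} the identity, valid for $f\in\mathbb{M}_0$,
\[
-F_{x_2}(\vx;f)=\frac{1}{2\pi}\int_{-1}^1\frac{\partial_{x_2}K(\vx;y_1,f(y_1))}{f(y_1)}\idiff y_1+\frac{1}{2\pi}\int_{-1}^1\idiff y_1\int_0^{f(y_1)}\frac{\partial_{x_2}K(\vx;\vy)}{y_2^2}\idiff y_2,
\]
with $\partial_{x_2}K(\vx;\vy)=\frac{y_2}{x_2}\,\eta\bigl(\frac{2x_2y_2}{(x_1-y_1)^2+x_2^2+y_2^2}\bigr)$ and both summands positive. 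Discarding the first, using $\eta(a)\ge a^2/3$ on $[0,1)$ (immediate from $\frac{1}{2a}\ln\frac{1+a}{1-a}=\sum_{k\ge0}\frac{a^{2k}}{2k+1}$), and evaluating $\int_0^b\frac{y\idiff y}{(a+y^2)^2}=\frac{b^2}{2a(a+b^2)}$, I obtain
\[
-F_{x_2}(\vx;f)\ \ge\ \frac{x_2}{3\pi}\int_{-1}^1\frac{f(y_1)^2\idiff y_1}{\bigl((x_1-y_1)^2+x_2^2\bigr)\bigl((x_1-y_1)^2+x_2^2+f(y_1)^2\bigr)}.
\]
Since $t\mapsto t/(a(a+t))$ is increasing and $f(y_1)^2\ge d^2(1-|y_1|)^2$, I may replace $f(y_1)^2$ by $d^2(1-|y_1|)^2$; hence it suffices to show the explicit bound, uniform in $x_1\in[-1,1]$, $x_2>0$,
\[
\mathcal{G}(x_1,x_2):=x_2\int_{-1}^1\frac{d^2(1-|y_1|)^2\idiff y_1}{\bigl((x_1-y_1)^2+x_2^2\bigr)\bigl((x_1-y_1)^2+x_2^2+d^2(1-|y_1|)^2\bigr)}\ \ge\ \frac{C_d}{\max\{1,x_2^3\}}.
\]

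For $x_2\ge 1/2$ this is routine: since $|x_1-y_1|\le 2$ and $(1-|y_1|)^2\le 1$, both denominator factors are at most $(17+4d^2)x_2^2$, so $\mathcal{G}(x_1,x_2)\ge\frac{2d^2}{3(17+4d^2)^2}\,x_2^{-3}$, and $x_2^{-3}\ge\max\{1,x_2^3\}^{-1}$ for every $x_2>0$, which gives the claim.

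The substance is $x_2<1/2$, where $\max\{1,x_2^3\}=1$ and I need $\mathcal{G}(x_1,x_2)\ge C_d$ uniformly in $x_1$. Using that $F_{x_2}$ is even in $x_1$, take $x_1\in[0,1]$, and bound $\mathcal{G}$ below by restricting the integral to a subinterval $J\subset[0,1]$ of length of order $x_2$ located near the point $x_1^\sharp:=\min\{x_1,\,1-x_2\}$. Then $1-x_1^\sharp=\max\{1-x_1,\,x_2\}\ge x_2$, so on $J$ the quantity $1-y_1$ stays comparable to $\max\{1-x_1,x_2\}$, bounded away from the zero of $1-|y_1|$; and $|x_1-x_1^\sharp|\le x_2$ keeps $J$ within $O(x_2)$ of $x_1$, so $(x_1-y_1)^2+x_2^2\le Cx_2^2$ there. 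Consequently, on $J$ the numerator $d^2(1-|y_1|)^2$ and the second denominator factor are both comparable, up to constants depending only on $d$, to $\max\{x_2^2,(1-x_1)^2\}$; dividing, the factor $\max\{x_2^2,(1-x_1)^2\}$ cancels and the integrand is $\ge c_d\,x_2^{-2}$ throughout $J$, whence $\mathcal{G}(x_1,x_2)\ge x_2\cdot|J|\cdot c_d\,x_2^{-2}\ge C_d$. A brief case split settles the placement of $J$: whether $x_1$ is within $O(x_2)$ of the endpoint $1$ (then $x_1^\sharp=1-x_2$ and one puts $J$ just to the left of it) or not (then $x_1^\sharp=x_1$), and in the latter case whether $J$ fits to the left or to the right of $x_1$ inside $[0,1]$.

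I expect the one genuinely delicate point to be exactly this uniformity in $x_1$ for small $x_2$. Near the endpoints $x_1\approx\pm1$ the hypothesis $f(x)\ge d(1-|x|)$ degenerates, so the integrand of $\mathcal{G}$ cannot be controlled on a fixed-size neighborhood of $x_1$; the argument must instead capture the thin sliver of width $\sim x_2$ on which both $(x_1-y_1)^2+x_2^2$ and $d^2(1-|y_1|)^2$ are of size $\sim x_2^2$. The reduction to $\mathcal{G}$, the monotonicity step, and the $x_2\ge 1/2$ estimate are all straightforward.
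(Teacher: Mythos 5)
Your proof is correct and follows essentially the same route as the paper: the same identity for $-F_{x_2}$ from the proof of Lemma \ref{lem:F_partial}, dropping the positive boundary term, the bound $\eta(a)\ge a^2/3$, and then extracting mass at scale $x_2$ from the region guaranteed by $f(x)\ge d(1-|x|)$. The only difference is the elementary endgame — you integrate exactly in $y_2$ and localize the resulting one-dimensional integral on an interval of length $\sim x_2$ near $\min\{x_1,1-x_2\}$ (with a case split at $x_2=1/2$), whereas the paper restricts the double integral to the rectangle $[x_1-x_2,x_1+x_2]\times[0,x_2]$, passes to the worst case $x_1=1$, and integrates over the triangle there; both yield the stated bound with a constant depending only on $d$.
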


\begin{proof}
Recall in the proof of Lemma \ref{lem:F_partial} we have showed that
\begin{equation}\label{eqt:Fx2_estimate_step}
\begin{split}
- F_{x_2}(x_1,x_2;f) &= \frac{1}{2\pi}\int_{-1}^1 \frac{1}{x_2}\cdot \eta\left(\frac{2x_2f(y_1)}{(x_1-y_1)^2+x_2^2+f(y_1)^2}\right)\idiff y_1 \\
&\quad + \frac{1}{2\pi}\int_{-1}^1\idiff y_1\int_0^{f(y_1)}\frac{1}{x_2y_2}\cdot \eta\left(\frac{2x_2y_2}{(x_1-y_1)^2+x_2^2+y_2^2}\right)\idiff y_2,
\end{split}
\end{equation}
where 
\[\eta(a) = \frac{1}{2a}\ln\frac{1+a}{1-a} - 1,\quad a\in[0,1).\]
Since $\eta(a)\geq a^2/3$, we get 
\begin{equation}\label{eqt:Fx2_estimate_step2}
\begin{split}
- F_{x_2}(x_1,x_2;f) &\geq C\int_{-1}^1\idiff y_1\int_0^{f(y_1)}\frac{x_2y_2}{\big((x_1-y_1)^2+x_2^2+y_2^2\big)^2}\idiff y_2\\
&\geq C\int_{\Omega_f\cap R_{x_1,x_2}}\frac{y_2}{x_2^3}\idiff \vy,
\end{split}
\end{equation}
where $\Omega_f = \{(y_1,y_2): y_1\in[-1,1],\ y_2\in[0,f(y_1)]\}$ and $R_{x_1,x_2}:= [x_1-x_2,x_1+x_2]\times[0,x_2]$. Provided that $f(x)\geq d(1-|x|)$ on $[-1,1]$, we further obtain
\[- F_{x_2}(x_1,x_2;f)\geq C\int_{\Omega_d\cap R_{x_1,x_2}}\frac{y_2}{x_2^3}\idiff \vy,\]
where $\Omega_d = \{(y_1,y_2): y_1\in[-1,1],\ y_2\in[0,d(1-|y_1|)]\}$. In order to prove the lemma, it then suffices to show that
\begin{equation}\label{eqt:Fx2_estimate_step3}
\int_{\Omega_d\cap R_{x_1,x_2}}\frac{y_2}{x_2^3}\idiff \vy \geq \frac{C_d}{\max\{1,x_2^3\}}
\end{equation}
for some constant $C_d>0$ that only depends on $d$. Without loss of generality, we may assume $d\leq 1$. Observe that, for any $x_1\in[-1,1]$, $x_2>0$, it holds 
\[\int_{\Omega_d\cap R_{x_1,x_2}}\frac{y_2}{x_2^3}\idiff \vy\geq \int_{\Omega_d\cap R_{1,x_2}}\frac{y_2}{x_2^3}\idiff \vy.\]
Also note that, since $1-x_2/d\leq 1-x_2$, we always have
\[\{(y_1,y_2): \max\{1-x_2,0\}\leq y_1\leq 1,\ 0\leq y_2\leq d(1-y_1)\} \subset \Omega_d\cap R_{1,x_2}.\]
Therefore, 
\[\int_{\Omega_d\cap R_{1,x_2}}\frac{y_2}{x_2^3}\idiff \vy\geq \int_{\max\{1-x_2,0\}}^1\idiff y_1\int_0^{d(1-y_1)}\frac{y_2}{x_2^3} \idiff y_2 = \frac{d^2}{6\max\{1,x_2^3\}}.\]
This implies that \eqref{eqt:Fx2_estimate_step3} holds with $C_d= d^2/6>0$, and thus the lemma is proved. 
\end{proof}

\begin{lemma}\label{lem:Fx2_estimate_2}
There is some uniform constant $C>0$ such that, for any $f\in \mathbb{M}_0$ and any $x_1\in \R$, $x_2> 0$, 
\[-F_{x_2}(x_1,x_2;f)\leq C.\]
\end{lemma}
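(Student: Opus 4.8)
The plan is to reuse the exact representation of $-F_{x_2}(x_1,x_2;f)$ derived in the proof of Lemma \ref{lem:F_partial} (equation \eqref{eqt:Fx2_estimate_step}), and bound the two nonnegative pieces separately by elementary estimates on the function $\eta$. Recall that
\[
- F_{x_2}(x_1,x_2;f) = \frac{1}{2\pi}\int_{-1}^1 \frac{1}{x_2}\,\eta\!\left(\frac{2x_2f(y_1)}{(x_1-y_1)^2+x_2^2+f(y_1)^2}\right)\idiff y_1 + \frac{1}{2\pi}\int_{-1}^1\!\!\idiff y_1\!\int_0^{f(y_1)}\!\!\frac{1}{x_2 y_2}\,\eta\!\left(\frac{2x_2 y_2}{(x_1-y_1)^2+x_2^2+y_2^2}\right)\idiff y_2,
\]
where $\eta(a) = \frac{1}{2a}\ln\frac{1+a}{1-a}-1$ for $a\in[0,1)$. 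The key elementary fact I would establish first is the pointwise bound $\eta(a)\leq C a^2$ for $a$ bounded away from $1$, together with the observation that here the argument $a = 2x_2 y_2/((x_1-y_1)^2+x_2^2+y_2^2)$ always satisfies $a\leq \frac{2x_2y_2}{x_2^2+y_2^2}\leq 1$, so we never approach the singularity of $\eta$; in fact for the first integral the argument is at most $\frac{2x_2 f(y_1)}{x_2^2+f(y_1)^2}\le 1$ as well. Since $\eta(a)\le Ca^2$ uniformly on, say, $[0,1]$ (one checks $\eta(1)=\infty$ is not an issue because $\eta(a)/a^2$ is increasing and the relevant $a$ stay $\le 1$, or more safely one uses $\eta(a)\le \frac{a^2}{3(1-a^2)}$ and notes $1-a^2\ge \frac{(x_2-y_2)^2}{x_2^2+y_2^2}$... actually the cleanest route is the bound below).

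Plugging $\eta(a)\le C a^2$ into the first integral gives a contribution bounded by
\[
\frac{C}{x_2}\int_{-1}^1 \frac{x_2^2 f(y_1)^2}{\big((x_1-y_1)^2+x_2^2+f(y_1)^2\big)^2}\idiff y_1 \le C x_2 \int_{-1}^1 \frac{f(y_1)^2}{\big(x_2^2+f(y_1)^2\big)^2}\idiff y_1 \le C x_2\int_{-1}^1 \frac{1}{x_2^2+f(y_1)^2}\idiff y_1,
\]
using $t/(x_2^2+t)^2\le 1/(x_2^2+t)$ at $t=f(y_1)^2$; since $f(y_1)^2\ge 0$ this last quantity is $\le C x_2 \cdot \frac{2}{x_2^2} = C/x_2$ — which is unbounded as $x_2\to 0$. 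So the naive split is too lossy near the axis, and I must instead keep the $y_2$-structure: bound the first integrand by $\frac{C}{x_2}\cdot\frac{x_2^2 f(y_1)^2}{(x_2^2+f(y_1)^2)^2}\le \frac{C x_2 f(y_1)^2}{(x_2^2+f(y_1)^2)^2}$ and then, crucially, recognize this as comparable to $\partial_{y_1}$ of an arctangent-type primitive, OR — the approach I will actually take — dominate $-F_{x_2}$ directly by the value at a reference configuration. The clean statement is: since $\eta$ is increasing and the argument $a$ is maximized (over the patch data) in a controllable way, one shows $-F_{x_2}(x_1,x_2;f)\le -F_{x_2}(x_1,x_2;f_{\max})$ is not available because $f$ is unbounded in $\mathbb M_0$; instead I note that the integrand $\frac{1}{x_2 y_2}\eta\big(\frac{2x_2y_2}{(x_1-y_1)^2+x_2^2+y_2^2}\big)$, as a function of the integration region, is largest when the patch fills a neighborhood of $(x_1,0)$, and then one does the explicit integral over $\R^2_+$: $\int_{\R}\idiff y_1\int_0^\infty \frac{1}{x_2 y_2}\eta\big(\frac{2x_2 y_2}{(x_1-y_1)^2+x_2^2+y_2^2}\big)\idiff y_2$, which by the substitution $y_1\mapsto x_1 + x_2 u$, $y_2\mapsto x_2 v$ becomes $x_2\int_\R\idiff u\int_0^\infty \frac{1}{v}\eta\big(\frac{2v}{u^2+1+v^2}\big)\idiff v$ — wait, the $1/x_2$ scales as $x_2^{0}$, let me just assert: \emph{after this rescaling the double integral is a finite absolute constant} (the integrand decays like $(u^2+v^2)^{-2}$ at infinity since $\eta(a)\sim a^2/3$, and like $v$ near $v=0$, so it is integrable), and the boundary term integral scales similarly to a constant. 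Hence $-F_{x_2}(x_1,x_2;f)\le C$. The main obstacle is making the domain-monotonicity rigorous — i.e. justifying that enlarging $\Omega_f$ (or replacing it by all of $\mathbb H$) only increases $-F_{x_2}$, which is immediate from \eqref{eqt:Fx2_estimate_step} once one checks $\eta\ge 0$ so each integrand is nonnegative, so $-F_{x_2}(x_1,x_2;f)\le \frac{1}{2\pi}\big(\text{full-}\mathbb H\text{ boundary-free version}\big)$ provided we also drop the first (boundary) integral or bound it by the same type of $\mathbb H$-integral; then the whole thing reduces to the single scale-invariant constant integral.

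Concretely, the steps in order: (i) recall \eqref{eqt:Fx2_estimate_step} and $\eta\ge 0$ on $[0,1)$, so both terms are $\ge 0$; (ii) for the bulk term, extend the $y_1$-integral to $\R$ and the $y_2$-integral to $[0,\infty)$, giving an upper bound $I_{\mathrm{bulk}}\le \frac{1}{2\pi}\int_\R\idiff y_1\int_0^\infty \frac{1}{x_2 y_2}\eta\big(\tfrac{2x_2 y_2}{(x_1-y_1)^2+x_2^2+y_2^2}\big)\idiff y_2$ and rescale $y_i = x_1\delta_{i1}+ x_2 w_i$ to see this equals an absolute constant $C_0 := \frac{1}{2\pi}\iint \tfrac{1}{w_2}\eta\big(\tfrac{2w_2}{w_1^2+1+w_2^2}\big)\idiff w_1\idiff w_2$, finite by the $\eta(a)\le Ca^2$ tail bound and $\eta(a)\le Ca$ (hence integrand $\le C$) near $w_2=0$; (iii) for the boundary term $\frac{1}{2\pi}\int_{-1}^1\frac{1}{x_2}\eta(\cdots)\idiff y_1$, use $\eta(a)\le C\min\{a,1\}$ to bound it by $\frac{C}{x_2}\int_\R \min\{\tfrac{2x_2 f(y_1)}{(x_1-y_1)^2+x_2^2},1\}\idiff y_1$, split the $y_1$-range according to whether $(x_1-y_1)^2+x_2^2 \lessgtr 2x_2 f(y_1)$, and check both pieces are $O(1)$ uniformly in $f\in\mathbb M_0$ (this uses only $f\ge 0$, not any lower bound, because we are seeking an \emph{upper} bound and can afford to replace $f(y_1)$ in the numerator region by its local value; alternatively integrate the original $\partial_{x_2}K/f$ expression directly and note it telescopes). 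The hard part is step (iii): unlike the bulk term it does not scale cleanly, and one must genuinely use that $\eta$ is bounded by $1$ times a decaying kernel and that the $y_1$-domain has length $2$, so the measure of $\{y_1: (x_1-y_1)^2+x_2^2\le 2x_2 f(y_1)\}$ times $1/x_2$, plus the tail $\int \frac{2 f(y_1)}{(x_1-y_1)^2}\,\mathbf 1\idiff y_1$, are both controlled — but since $f$ can be large this requires care, and the correct fix is to not bound $f$ pointwise but to recombine with the bulk term (the boundary term is exactly the "missing" top slice of the bulk integral, so $I_{\mathrm{boundary}}+I_{\mathrm{bulk}}$ together equal $\frac{1}{2\pi}\int_{-1}^1\idiff y_1\int_0^{f(y_1)}(\partial_{x_2}K/y_2^2)$ plus the evaluated term, which is cleaner to bound as a single object dominated by the full-$\mathbb H$ constant). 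I will present it via this recombination so that the whole estimate collapses to the single scale-invariant constant $C_0$ from step (ii).
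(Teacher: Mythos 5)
Your starting point --- the representation \eqref{eqt:Fx2_estimate_step}, the positivity of $\eta$, and the reduction of the bulk term to a scale-invariant constant by enlarging the domain to all of $\mathbb{H}$ and rescaling $y_1=x_1+x_2w_1$, $y_2=x_2w_2$ --- is sound and is in the same spirit as the paper's proof, but the two key analytic steps are not correct as written. First, the claim that the argument $a=\frac{2x_2y_2}{(x_1-y_1)^2+x_2^2+y_2^2}$ ``never approaches the singularity of $\eta$'' is false: $a=1$ exactly at $(y_1,y_2)=(x_1,x_2)$, which lies in the integration region whenever $(x_1,x_2)$ is inside the patch, so $\eta$ does blow up there. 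Consequently the bounds $\eta(a)\le Ca^2$ and $\eta(a)\le Ca$ fail near $a=1$, and your finiteness argument for the rescaled constant $C_0$ ignores precisely the singular point $(w_1,w_2)=(0,1)$. The fallback $\eta(a)\le\frac{a^2}{3(1-a^2)}$ is true but too lossy: since $1-a=\frac{(x_1-y_1)^2+(x_2-y_2)^2}{(x_1-y_1)^2+x_2^2+y_2^2}$, the resulting majorant behaves like $|\vy-\vx|^{-2}$ near $(x_1,x_2)$ and its integral diverges. What is needed (and what the paper uses) is the sharper bound $\eta(a)\le Ca|\ln(1-a)|$, which turns the blow-up into an integrable logarithmic singularity; with it, $C_0<\infty$ does follow and your bulk estimate is fine.

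Second, your treatment of the boundary term by ``recombination'' with the bulk is not justified. The boundary term $\frac{1}{2\pi}\int_{-1}^1\frac{1}{x_2}\eta\big(\frac{2x_2f(y_1)}{(x_1-y_1)^2+x_2^2+f(y_1)^2}\big)\,dy_1$ comes from the evaluated term $\partial_{x_2}K(\vx;y_1,f(y_1))/f(y_1)$ in the integration by parts; it is not the ``missing top slice'' $\int_{f(y_1)}^{\infty}\partial_{x_2}K(\vx;\vy)\,y_2^{-2}\,dy_2$, because in the top slice $\partial_{x_2}K$ is evaluated at the running height $y_2$ rather than at $f(y_1)$, and $y_2\mapsto\partial_{x_2}K=\frac{y_2}{x_2}\eta(a(y_2))$ is not monotone (it decays like $1/y_2$ at infinity), so no slice-wise domination holds; indeed, if $f(x_1)=x_2$ then at $y_1=x_1$ the boundary integrand equals $\frac{1}{x_2}\eta(1)=+\infty$ while the corresponding top-slice integral is finite. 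So the boundary term requires its own estimate. The paper handles it directly (its term $I_1$): using $\eta(a)\le Ca|\ln(1-a)|$, the boundary integrand is dominated by $\frac{C}{((x_1-y_1)^2+x_2^2)^{1/2}}\ln\big(1+\frac{x_2}{|x_1-y_1|}+\frac{2x_2^2}{(x_1-y_1)^2}\big)$, and the substitution $y_1=x_1+x_2s$ yields an absolute constant. If you repair step (ii) with the logarithmic bound on $\eta$ and replace step (iii) by such a direct one-dimensional estimate, your argument becomes essentially the paper's proof.
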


\begin{proof}
Recall \eqref{eqt:Fx2_estimate_step} with 
\[\eta(a) = \frac{1}{2a}\ln\frac{1+a}{1-a} - 1,\quad a\in[0,1).\]
It is not difficult to see that $\eta(a)\leq C a|\ln(1-a)|$ for some absolute constant $C>0$. As a result, 
\begin{align*}
- F_{x_2}(x_1,x_2;f) &\leq  C\int_{-1}^1 \frac{f(y_1)}{(x_1-y_1)^2+x_2^2+f(y_1)^2} \ln\frac{(x_1-y_1)^2+x_2^2+f(y_1)^2}{(x_1-y_1)^2+(x_2-f(y_1))^2}\idiff y_1 \\
&\quad + C\int_{-1}^1\idiff y_1\int_0^{f(y_1)}\frac{1}{(x_1-y_1)^2+x_2^2+y_2^2} \ln\frac{(x_1-y_1)^2+x_2^2+y_2^2}{(x_1-y_1)^2+(x_2-y_2)^2}\idiff y_2\\
&=: C(I_1 + I_2).
\end{align*}
For $I_1$, we compute that
\begin{align*}
I_1 &= \int_{-1}^1 \frac{f(y_1)}{(x_1-y_1)^2+x_2^2+f(y_1)^2} \ln\left(1 + \frac{2x_2(f(y_1)-x_2) + 2x_2^2}{(x_1-y_1)^2+(x_2-f(y_1))^2}\right)\idiff y_1\\
&\leq \int_{-1}^1 \frac{1}{\big((x_1-y_1)^2+x_2^2\big)^{1/2}} \ln\left(1 + \frac{x_2}{|x_1-y_1|} + \frac{2x_2^2}{(x_1-y_1)^2}\right)\idiff y_1\\
&\leq \int_{\R} \frac{1}{\big(1+s^2\big)^{1/2}} \ln\left(1 + \frac{1}{|s|} + \frac{2}{s^2}\right)\idiff s\\
&\leq C.
\end{align*}
As for $I_2$, we similarly compute that
\begin{align*}
I_2 &=  \int_{-1}^1\idiff y_1\int_0^{f(y_1)}\frac{1}{(x_1-y_1)^2+x_2^2+y_2^2} \ln\left(1 + \frac{2x_2y_2}{(x_1-y_1)^2+(x_2-y_2)^2}\right)\idiff y_2\\
&\leq \int_{\R\times \R_+} \frac{1}{1 + s_1^2 + s_2^2} \ln\left(1 + \frac{2s_2}{s_1^2+(1-s_2)^2}\right) \idiff s_1 \idiff s_2\\
&\leq C.
\end{align*}
The claimed uniform upper bound for $-F_{x_2}(x_1,x_2;f)$ thus follows.
\end{proof}

\subsection{Bounds of $\mathbf{R}$} Based on the preceding estimates of $F(\vx;f)$, we can establish some quantitative bounds for $\mtx{R}(f)$ under extra assumptions on $f\in \mathbb{M}_0$. These estimates of $\mtx{R}(f)$ will be our guidance for constructing a suitable function set $\mathbb{D}$ such that the dynamical system \eqref{eqt:dynamic_preview} is well constrained for any initial state from $\mathbb{D}$.

\begin{lemma}\label{lem:Constant_upper_bound}
There is some uniform constant $M > 1$ such that, for any $f\in \mathbb{M}_0$, $f(0)\leq M$ implies $\mtx{R}(f)(0)< M$. (For example, we can choose  $M=3\times 2^{8/\pi}$.)
\end{lemma}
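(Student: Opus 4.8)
The plan is to turn the statement into a single scalar inequality at $x_1=0$. Writing $\tilde f:=\mtx{R}(f)$, the defining relation $F(0,\tilde f(0);f)=0$ together with $\tilde f(0)>0$, the positivity $F(0,0^+;f)>0$ from Lemma~\ref{lem:F_onR}, the limit $F(0,+\infty;f)=-c(f)<0$, and the strict monotonicity $F_{x_2}(0,\cdot;f)<0$ from Lemma~\ref{lem:F_partial} show that $\mtx{R}(f)(0)<M$ is equivalent to $F(0,M;f)<0$, i.e.\ to $\phi(0,M;f)<c(f)\,M$. So it suffices to bound $\phi(0,M;f)$ above and $c(f)M$ below, and I will write both as $\tfrac{M}{2\pi}\int_{-1}^{1}(\cdots)\idiff y_1$ so that a pointwise comparison of the two integrands closes the argument.

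For the lower bound I take the closed form $c(f)M=\tfrac{M}{2\pi}\int_{-1}^{1}\ln\!\big(1+\tfrac{f(y_1)^2}{(1-y_1)^2}\big)\idiff y_1$ from Lemma~\ref{lemma:Gamma_c}. For the upper bound, in $\phi(0,M;f)=\tfrac1{2\pi}\int_{-1}^{1}\idiff y_1\int_0^{f(y_1)}\ln\tfrac{y_1^2+(M+y_2)^2}{y_1^2+(M-y_2)^2}\idiff y_2$ I discard the nonnegative $y_1^2$ from the numerator and denominator of the logarithm's argument; since $\tfrac{y_1^2+A}{y_1^2+B}\le\tfrac{A}{B}$ for $A\ge B>0$ with equality only at $y_1=0$, and $\Omega_f$ has positive area, this \emph{strictly} decreases the integral. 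The inner integral then becomes $\int_0^{f(y_1)}2\ln\tfrac{M+y_2}{M-y_2}\idiff y_2$, which after the substitution $y_2=Ms$ and the antiderivative recorded in the proof of Lemma~\ref{lem:Fx1_estimate} equals $M\,\Psi(f(y_1)/M)$ with $\Psi(a):=2\big[(1+a)\ln(1+a)+(1-a)\ln(1-a)\big]$ (so $\Psi(1)=\ln 16$); this is legitimate because $f(y_1)\le f(0)\le M$. Hence $\phi(0,M;f)<\tfrac{M}{2\pi}\int_{-1}^{1}\Psi(f(y_1)/M)\idiff y_1$.

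It now suffices to prove the pointwise bound $\Psi\!\big(f(y_1)/M\big)\le\ln\!\big(1+\tfrac{f(y_1)^2}{(1-y_1)^2}\big)$ for every $y_1\in[-1,1]$. Using $(1-y_1)^2\le 4$, it is enough to show $\Psi(a)\le\ln\!\big(1+\tfrac{M^2}{4}a^2\big)$ for all $a\in[0,1]$. I would first record the elementary estimate $\Psi(a)\le(\ln 16)\,a^2$ on $[0,1]$ (it holds because $g(a):=\Psi(a)-(\ln16)a^2$ satisfies $g(0)=g(1)=0$, $g'(0)=0$, and $g''$ changes sign exactly once on $(0,1)$, from negative to positive, so $g$ is concave-then-convex and $g\le 0$), which reduces the goal to $16^{\,t}-1\le\tfrac{M^2}{4}\,t$ for $t:=a^2\in[0,1]$. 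Since $t\mapsto 16^{\,t}-1$ is convex and vanishes at $t=0$, the ratio $(16^{\,t}-1)/t$ is nondecreasing, so this holds on all of $[0,1]$ once it holds at $t=1$, i.e.\ once $M^2\ge 60$; the stated value $M=3\times 2^{8/\pi}$ satisfies this with room to spare. Combining the strict inequality for $\phi$ with the (weak) pointwise comparison yields $\phi(0,M;f)<c(f)M$, hence $F(0,M;f)<0$ and $\mtx{R}(f)(0)<M$.

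The step I expect to be the crux is the choice of upper bound for $\phi(0,M;f)$. A crude bound of the form $\phi(0,M;f)\le CM$ with $C$ absolute is useless, since $c(f)$ can be made arbitrarily small (e.g.\ for $f$ uniformly small, or for a thin tall spike), so the bound must be matched in shape to the integrand of $c(f)$; the expression $\tfrac{M}{2\pi}\int\Psi(f(y_1)/M)\idiff y_1$ achieves exactly this and collapses the lemma to the one-variable inequality $\Psi(a)\le\ln(1+\tfrac{M^2}{4}a^2)$. The regime where $f(0)$ is comparable to $M$ (so $y_2$ can approach $M$ and $\ln(1+t)\le t$ is far too lossy) is precisely what forces the use of the exact antiderivative for $\Psi$ rather than a one-line $\ln(1+t)\le t$ estimate.
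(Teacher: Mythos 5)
Your proof is correct, but it takes a genuinely different route from the paper's. The paper argues in two steps with an intermediate constant $M_1$: first, if $f(0)\geq M_1$ then $\mtx{R}(f)(0)<f(0)$, proved by passing to the inverse-function representations \eqref{eqt:phi_inverse} and \eqref{eqt:cf_inverse} (hence reducing to $f\in\mathbb{M}_1$ by density, with a forward appeal to the continuity of $\mtx{R}$ in Proposition \ref{prop:R_continuity}) and comparing the two resulting integrals weighted by the decreasing function $g(s)=f^{-1}(sf(0))$ through a sign-change argument for the kernel difference $m(s)$; second, if $f(0)\leq M_1$ then $\mtx{R}(f)(0)<M:=3M_1$ by cruder bounds; the two cases are then combined. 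You instead verify in a single stroke that $f(0)\leq M$ forces $F(0,M;f)<0$, staying entirely in the graph representation \eqref{eqt:phi} and \eqref{eqt:cf}: you evaluate the inner $y_2$-integral in closed form after discarding $y_1^2$, obtaining $M\Psi(f(y_1)/M)$, and compare pointwise in $y_1$ with the integrand of $c(f)M$ via $(1-y_1)^2\leq 4$, reducing the lemma to the one-variable inequality $\Psi(a)\leq\ln\bigl(1+\tfrac{M^2}{4}a^2\bigr)$ on $[0,1]$, which your estimate $\Psi(a)\leq(\ln 16)a^2$ together with the convexity of $t\mapsto 16^t-1$ settles once $M^2\geq 60$; the strictness needed for $\mtx{R}(f)(0)<M$ comes from the strict discard of $y_1^2$ on a positive-measure part of $\Omega_f$, and the monotonicity of $F(0,\cdot\,;f)$ used for the equivalence is exactly what the paper itself invokes in Proposition \ref{prop:implicit_existence}. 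Your route buys a shorter, single-case argument that needs neither $f^{-1}$ nor the density/continuity detour (it works directly for every $f\in\mathbb{M}_0$, only using that $f$ attains its maximum at $0$), at the cost of the extra information in the paper's first step (the contraction $\mtx{R}(f)(0)<f(0)$ when $f(0)\geq M_1$), which the paper does not use elsewhere. One slip to fix: by \eqref{eqt:phi} the integrand of $\phi(0,M;f)$ carries a factor $\tfrac12$ in front of the logarithm, so the expression you wrote equals $2\phi(0,M;f)$; since you only use it as an upper bound and your comparison is proved for this doubled quantity, the conclusion is unaffected (you in fact prove a slightly stronger inequality), but the identity as stated should be corrected.
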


\begin{proof}
We may only prove the lemma for $f\in \mathbb{M}_1$ since $\mathbb{M}_1$ is dense in $\mathbb{M}_0$, as we will show later that $\mtx{R}(f)$ is continuous in $f$ with respect to the $L^\infty$ topology (see Proposition \ref{prop:R_continuity} below). 

We first show that there exists some constant $M_1>1$ such that, if $f(0)\geq M_1$, then $\mtx{R}(f)(0)<f(0)$. Recall that $\mtx{R}(f)$ is determined via $F(x,\mtx{R}(f)(x);f)=0$ for $x\in[-1,1]$. Thanks to Lemma \ref{lem:F_partial}, it suffices to show that $f(0)\geq M_1$ implies $F(0, f(0);f)<0$, i.e.
\begin{equation}\label{eqt:M_step}
 \frac{\phi(0,f(0);f)}{f(0)} < c(f).
\end{equation}
Write $\phi = \phi(\cdot,\cdot;f)$. For the left-hand side of \eqref{eqt:M_step}, we use \eqref{eqt:phi_inverse} to compute that
\begin{align*}
\frac{\phi(0,f(0))}{f(0)} &=\frac{1}{2\pi f(0)} \int_0^{f(0)}\idiff y_2\int_0^{f^{-1}(y_2)}\ln\frac{y_1^2 + (f(0)+y_2)^2}{y_1^2 + (f(0)-y_2)^2}\idiff y_1\\
&\leq \frac{1}{2\pi f(0)} \int_0^{f(0)}\idiff y_2\int_0^{f^{-1}(y_2)}\ln\frac{(f(0)+y_2)^2}{(f(0)-y_2)^2}\idiff y_1\\
&= \frac{1}{\pi} \int_0^{f(0)}\frac{f^{-1}(y_2)}{ f(0)}\ln\frac{f(0)+y_2}{f(0)-y_2}\idiff y_2\\
&= \frac{1}{\pi} \int_0^{1}f^{-1}(sf(0))\ln\frac{1+s}{1-s}\idiff s.
\end{align*}
For the right-hand side of \eqref{eqt:M_step}, we use \eqref{eqt:cf_inverse} to obtain
\begin{align*}
c(f) &= \frac{1}{\pi}\int_0^{f(0)}\left\{\arctan\left(\frac{f^{-1}(y_2)-1}{y_2}\right) + \arctan\left(\frac{f^{-1}(y_2)+1}{y_2}\right)\right\}\idiff y_2\\
&= \frac{1}{\pi}\int_0^{f(0)} \arctan\left(\frac{2y_2f^{-1}(y_2)}{1-f^{-1}(y_2)^2 + y_2^2}\right)\idiff y_2\\
&\geq \frac{1}{\pi}\int_0^{f(0)} \arctan\left(\frac{2y_2f^{-1}(y_2)}{1 + y_2^2}\right)\idiff y_2.
\end{align*}
We have used the fact that $f^{-1}(y_2)\in [0,1]$, so the two terms in the first line above have opposite signs and thus can be combined into a single term in the second line. Recall that we assume $f(0)\geq M_1\geq 1$. We then bound $c(f)$ from below as
\begin{align*}
c(f) &\geq \frac{1}{\pi}\int_0^1 \arctan\left(y_2f^{-1}(y_2)\right)\idiff y_2 + \frac{1}{\pi}\int_1^{f(0)} \arctan\left(\frac{f^{-1}(y_2)}{y_2}\right)\idiff y_2\\
&\geq \frac{1}{4}\int_0^1 y_2f^{-1}(y_2)\idiff y_2 + \frac{1}{4}\int_1^{f(0)} \frac{f^{-1}(y_2)}{y_2}\idiff y_2\\
&= \frac{1}{4}\left(f(0)^2\int_0^{1/f(0)}sf^{-1}(sf(0))\idiff s + \int_{1/f(0)}^1\frac{f^{-1}(sf(0))}{s}\idiff s\right)\\
&=  \frac{f(0)}{4}\int_0^1f^{-1}(sf(0))\cdot \min\left\{sf(0)\,,\,\frac{1}{sf(0)}\right\}\idiff s.  
\end{align*}
We have used the fact that $\arctan a\geq (\pi/4)a$ for $a\in[0,1]$.

Denote $g(s) := f^{-1}(sf(0))$, which is positive for $s\in[0,1)$ and strictly decreasing in $s\in[0,1]$. Combining the estimates above, in order to prove \eqref{eqt:M_step}, it suffices to show that
\begin{equation}\label{eqt:M_step2}
\int_0^{1}g(s)\ln\frac{1+s}{1-s}\idiff s < \frac{\pi}{4}\int_0^1g(s)\cdot f(0)\min\left\{sf(0)\,,\,\frac{1}{sf(0)}\right\}\idiff s,
\end{equation}
as long as $f(0)\geq M_1$ for some sufficiently large $M_1$. In fact, we can choose $M_1=2^{8/\pi}$. We observe that
\[\int_0^1\ln\frac{1+s}{1-s}\idiff s =\ln 4,\quad \int_0^1 f(0)\min\left\{sf(0)\,,\,\frac{1}{sf(0)}\right\}\idiff s = \frac{1}{2} + \ln f(0).\]
Therefore, if $f(0)\geq M_1$, then
\[\int_0^1\ln\frac{1+s}{1-s}\idiff s  = \ln 4 < \frac{\pi}{4}\left(\frac{1}{2} + \ln M_1\right)\leq \frac{\pi}{4}\left(\frac{1}{2} + \ln f(0)\right) = \frac{\pi}{4}\int_0^1 f(0)\min\left\{sf(0)\,,\,\frac{1}{sf(0)}\right\}\idiff s.\]
That is,
\[\int_0^1 m(s)\idiff s < 0,\]
where 
\[m(s) := \ln\frac{1+s}{1-s} - \frac{\pi}{4}f(0)\min\left\{sf(0)\,,\,\frac{1}{sf(0)}\right\}.\]
Note that $m(0)=0$, $\lim_{s\rightarrow 1}m(s) = +\infty$, and
\[m'(s)=\left\{\begin{array}{ll}
\displaystyle \frac{2}{1-s^2} - \frac{\pi}{4}f(0)^2 \leq f(0)^2\left(\frac{2}{f(0)^2-1} - \frac{\pi}{4}\right)< 0,& s\in[0,1/f(0)),\\
\displaystyle \frac{2}{1-s^2} +  \frac{\pi}{4s^2} > 0, & s\in(1/f(0),1).
\end{array}\right.\]
Here, we have used the assumption that $f(0)^2\geq M_1^2  = 2^{16/\pi}> 1 + 8/\pi$. Hence, there is some $\xi\in(0,1)$ such that $m(\xi)=0$, $m(s)< 0$ for $s\in(0,\xi)$, and $m(s)> 0$ for $s\in(\xi,1]$. Since $g(s)$ is positive for $s\in[0,1)$ and strictly decreasing in $s\in[0,1]$, it then follows that
\[\int_0^1 g(s)m(s)\idiff s = \int_0^1 (g(s)-g(\xi))m(s)\idiff s + g(\xi)\int_0^1 m(s)\idiff s < 0,\]
which is \eqref{eqt:M_step2}.

Next, we show that there is some sufficiently large constant $M\geq M_1$ such that $f(0)\leq M_1$ implies $\mtx{R}(f(0))< M$. Again, it suffices to show that if $f(0)\leq M_1$, then
\begin{equation}\label{eqt:M_step3}
 \frac{\phi(0,M)}{M} < c(f).
\end{equation}
We can similarly derive that 
\begin{align*}
\frac{\phi(0,M)}{M} &=\frac{1}{2\pi M} \int_0^{f(0)}\idiff y_2\int_0^{f^{-1}(y_2)}\ln\frac{y_1^2 + (M+y_2)^2}{y_1^2 + (M-y_2)^2}\idiff y_1\\
&\leq \frac{1}{\pi} \int_0^{f(0)}\frac{f^{-1}(y_2)}{M}\ln\frac{M+y_2}{M-y_2}\idiff y_2\\
&= \frac{1}{\pi} \int_0^{1}\frac{f^{-1}(sf(0))}{M/f(0)}\ln\frac{M/f(0)+s}{M/f(0)-s}\idiff s\\
&\leq \frac{2}{\pi} \int_0^{1}\frac{f^{-1}(sf(0))f(0)^2s}{M(M-sf(0))}\idiff s\\
&\leq \frac{2f(0)^2}{\pi M(M-M_1)} \int_0^{1}sf^{-1}(sf(0))\idiff s,
\end{align*}
and 
\begin{align*}
c(f) &\geq \frac{1}{\pi}\int_0^{f(0)} \arctan\left(\frac{2y_2f^{-1}(y_2)}{1 + y_2^2}\right)\idiff y_2\\
&\geq \frac{1}{\pi}\int_0^{f(0)} \arctan\left(\frac{2y_2f^{-1}(y_2)}{1 + f(0)^2}\right)\idiff y_2\\
&\geq \frac{1}{4}\int_0^{f(0)} \frac{2y_2f^{-1}(y_2)}{1 + f(0)^2}\idiff y_2 \geq \frac{f(0)^2}{2(1+M_1^2)}\int_0^1sf^{-1}(sf(0))\idiff s.
\end{align*}
We have used above the assumption that $f(0)\leq M_1\leq M$. In order to achieve \eqref{eqt:M_step3}, it suffices to choose $M$ sufficiently large so that
\[\frac{2}{\pi}\frac{1}{M(M-M_1)} <  \frac{1}{2(1+M_1^2)}.\]
This can certainly be achieved if we choose, for example, $M = 3M_1$.

In summary, we have shown that, with $M_1=2^{8/\pi}$ and $M=3M_1 = 3\times2^{8/\pi}$, (1) if $f(0)\geq M_1$, then $\mtx{R}(f)(0)< f(0) $, and (2) if $f(0)\leq M_1$, then $\mtx{R}(f)(0)< M$. Combining these two results, we can conclude that if $f(0)\leq M$, then $\mtx{R}(f)(0)< M$. The lemma is thus proved.
\end{proof}

From now on, we will always denote by $M$ the $\mtx{R}$-preserving constant upper bound determined in Lemma \ref{lem:Constant_upper_bound}. The next lemma provides a non-constant upper barrier that constrains the solution of the dynamical system \eqref{eqt:dynamic_preview} from above, especially for $x$ near $\pm1$ where this upper barrier vanishes like $(1-|x|)\log(1+1/(1-|x|))$.

\begin{lemma}\label{lem:upper_barrier}
Let $v_\Lambda$ be defined as 
\begin{equation}\label{eqt:v_definition}
v_\Lambda(x) := M\cdot W^{-1}\left(\frac{\Lambda(1-|x|)}{M}\right),\quad \text{where}\ W(t):= \int_0^{t}\frac{1}{1+\ln(1+1/s)}\idiff s.
\end{equation}
There is some sufficiently large $\Lambda>0$ such that, if $f\in \mathbb{M}_0$ satisfies $f(x)\leq \min\{v_\Lambda(x)\,,M\}$ on $[-1,1]$ and $f(x_*)=\min\{v_\Lambda(x_*)\,,M\}$ at some $x_*\in (-1,1)$, then $\mtx{R}(f)(x_*)< v_\Lambda(x_*)$.
\end{lemma}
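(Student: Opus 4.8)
The plan is to turn the statement into a one-dimensional claim about $F(\cdot,\cdot\,;f)$ and run a barrier argument along the curve $x_2=v_\Lambda(x_1)$. Since, by definition of $\mtx{R}$ via \eqref{eqt:implicit_relation}, $\mtx{R}(f)(x_*)$ is the unique zero of $x_2\mapsto F(x_*,x_2;f)$ on $[0,\infty)$, and this function is continuous and strictly decreasing there by Lemma~\ref{lem:F_partial}, the inequality $\mtx{R}(f)(x_*)<v_\Lambda(x_*)$ is equivalent to $F(x_*,v_\Lambda(x_*);f)<0$. By the evenness of $f$, $v_\Lambda$, and $\mtx{R}(f)$ I may assume $x_*\geq0$. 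I would first dispose of the case $v_\Lambda(x_*)\geq M$: then the touching point lies on the flat part of the barrier, $f(x_*)=M$, so monotonicity of $f$ forces $f(0)=M$; Lemma~\ref{lem:Constant_upper_bound} gives $\mtx{R}(f)(0)<M$, and since $\mtx{R}(f)\in\mathbb{M}_1$ is non-increasing on $[0,1]$, $\mtx{R}(f)(x_*)\leq\mtx{R}(f)(0)<M\leq v_\Lambda(x_*)$. So the real work is the case $v_\Lambda(x_*)<M$, where the touching condition reads $f(x_*)=v_\Lambda(x_*)$.

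Two preliminary facts set up the barrier argument. First, differentiating the defining relation $W(v_\Lambda(x)/M)=\Lambda(1-x)/M$ from \eqref{eqt:v_definition} and using $W'(t)=1/(1+\ln(1+1/t))$ shows that $v_\Lambda$ solves the autonomous ODE
\[
v_\Lambda'(x)=-\Lambda\Big(1+\ln\Big(1+\frac{M}{v_\Lambda(x)}\Big)\Big),\qquad x\in(0,1),
\]
and $v_\Lambda$ is strictly decreasing with $v_\Lambda(1)=0$; note this is exactly the form of the upper bound on $|F_{x_1}|$ in Lemma~\ref{lem:Fx1_estimate}. Second, since $f$ is non-increasing on $[0,1]$ and $f(x_*)=v_\Lambda(x_*)$, one has $f(y_1)\geq v_\Lambda(x_*)$ for $|y_1|\leq x_*$, so the patch $\Omega_f$ contains the solid block $[-x_*,x_*]\times[0,v_\Lambda(x_*)]$. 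I would also record that $v_\Lambda(x_*)<M$ forces $1-x_*=O(1/\Lambda)$, hence $x_*\geq1/2$ once $\Lambda$ is large — this is only needed for a minor edge case later.

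Now introduce $G(x):=F(x,v_\Lambda(x);f)$ on $[x_*,1]$, continuous with $G(1)=F(1,0;f)=0$ by Lemma~\ref{lem:F_onR}. The goal is to show $\int_{x_*}^1G'(x)\idiff x>0$, which gives $G(x_*)=-\int_{x_*}^1G'(x)\idiff x<0$. Differentiating and feeding in $|F_{x_1}(x,v_\Lambda(x);f)|\leq C(1+\ln(1+M/v_\Lambda(x)))$ from Lemma~\ref{lem:Fx1_estimate} (legitimate because $f(0)\leq M$), the sign $F_{x_2}<0$, and the ODE above yields the differential inequality
\[
G'(x)\geq\Big(1+\ln\Big(1+\frac{M}{v_\Lambda(x)}\Big)\Big)\Big[\Lambda\big(-F_{x_2}(x,v_\Lambda(x);f)\big)-C\Big].
\]
Since $\int_{x_*}^1(1+\ln(1+M/v_\Lambda(x)))\idiff x=\frac1\Lambda\int_{x_*}^1|v_\Lambda'(x)|\idiff x=v_\Lambda(x_*)/\Lambda$, the whole thing reduces to a lower bound of the shape $\int_{x_*}^1(1+\ln(1+M/v_\Lambda(x)))(-F_{x_2}(x,v_\Lambda(x);f))\idiff x\geq c\,v_\Lambda(x_*)/\Lambda$ with $c>0$ universal; granting it, $\int_{x_*}^1 G'\geq v_\Lambda(x_*)(c-C/\Lambda)>0$ for $\Lambda$ large.

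The hard part is this lower bound on $-F_{x_2}$ along the barrier. Since $v_\Lambda$ is the \emph{upper} barrier there is no a priori lower bound on $f$, so Lemma~\ref{lem:Fx2_estimate} does not apply directly, and in fact $-F_{x_2}(x,v_\Lambda(x);f)\to0$ as $x\to1$. My plan is to harvest the required mass only from a short interval $[x_*,x_*+\delta]$ to the right of $x_*$, where the touching condition supplies the solid block above. Quantitatively: a bootstrap from $|v_\Lambda'|\leq\Lambda(1+\ln(1+2M/v_\Lambda(x_*)))$ on $\{v_\Lambda\geq v_\Lambda(x_*)/2\}$ shows $v_\Lambda\geq v_\Lambda(x_*)/2$ on $[x_*,x_*+\delta]$ with $\delta\simeq v_\Lambda(x_*)/(\Lambda(1+\ln(1+M/v_\Lambda(x_*))))$; for $x$ in this interval the rectangle $R_{x,v_\Lambda(x)}$ from the proof of Lemma~\ref{lem:Fx2_estimate} still meets the block $[-x_*,x_*]\times[0,v_\Lambda(x_*)]\subseteq\Omega_f$ in a sub-rectangle of area $\gtrsim v_\Lambda(x)^3$ (one checks $v_\Lambda(x)\geq x-x_*$ there, and in the regime $v_\Lambda(x)>x+x_*$ one uses $x_*\geq1/2$ and $v_\Lambda(x)<M$), so the intermediate estimate \eqref{eqt:Fx2_estimate_step2} gives $-F_{x_2}(x,v_\Lambda(x);f)\geq c_*>0$ on $[x_*,x_*+\delta]$. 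Integrating over this interval and using $1+\ln(1+M/v_\Lambda(x))\geq1+\ln(1+M/v_\Lambda(x_*))$ produces the desired lower bound $\gtrsim v_\Lambda(x_*)/\Lambda$, and fixing $\Lambda$ large enough closes the argument. The only remaining bookkeeping is to justify $G(1)-G(x_*)=\int_{x_*}^1G'$: $G$ is $C^1$ on each $[x_*,1-\varepsilon]$, and $|G'|\leq C(1+\ln(1+M/v_\Lambda(x)))$ is integrable on $(x_*,1)$, so the identity follows by letting $\varepsilon\to0$.
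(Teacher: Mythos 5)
Your proposal is correct, and it is a genuinely different implementation of the same underlying mechanism as the paper. The paper argues by contradiction on $\tilde f=\mtx{R}(f)$ itself: assuming $\tilde f(x_*)\geq f(x_*)$, it tracks $\tilde f$ on an interval where it must descend from $f(x_*)$ to $2(1-x_*)$, bounds $-\tilde f'=F_{x_1}/F_{x_2}$ by $C\bigl(1+\ln(1+M/\tilde f)\bigr)$ (first bound of Lemma \ref{lem:Fx1_estimate} plus a lower bound $-F_{x_2}\geq C$ obtained from the solid block $[0,x_*]\times[0,f(x_*)]\subset\Omega_f$ via \eqref{eqt:Fx2_estimate_step2}), and integrates $W(\tilde f/M)$ to contradict the definition of $v_\Lambda$ once $\Lambda$ is large. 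You instead never touch $\tilde f$ beyond the monotonicity reduction $\mtx{R}(f)(x_*)<v_\Lambda(x_*)\Leftrightarrow F(x_*,v_\Lambda(x_*);f)<0$, and you run a supersolution-type computation along the barrier: $G(x)=F(x,v_\Lambda(x);f)$, $G(1)=0$ by Lemma \ref{lem:F_onR}, and $G'\geq(1+\ln(1+M/v_\Lambda))\bigl[\Lambda(-F_{x_2})-C\bigr]$ by the ODE $v_\Lambda'=-\Lambda(1+\ln(1+M/v_\Lambda))$ and Lemma \ref{lem:Fx1_estimate}; the touching condition supplies the block $[-x_*,x_*]\times[0,v_\Lambda(x_*)]\subset\Omega_f$, which through \eqref{eqt:Fx2_estimate_step2} gives $-F_{x_2}\geq c_*$ on a short interval of length $\delta\simeq v_\Lambda(x_*)/\bigl(\Lambda(1+\ln(1+M/v_\Lambda(x_*)))\bigr)$ to the right of $x_*$, which is exactly enough to beat the $-C\int(1+\ln)=-Cv_\Lambda(x_*)/\Lambda$ loss for $\Lambda$ large. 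Both proofs thus rest on the same two quantitative inputs and on the design of $W$; what yours buys is a direct (non-contradiction) argument with the $-F_{x_2}$ lower bound localized near the touching point on the barrier curve itself, while the paper's version avoids differentiating along the barrier and any discussion of $G$ near $x=1$. Two small points to tighten: your case analysis for the block intersection is fine (in the regime $x-v_\Lambda(x)\geq-x_*$ the width is $\geq v_\Lambda(x)-\delta\gtrsim v_\Lambda(x)$, and otherwise $x_*\geq 1/2$, $v_\Lambda(x)<M$ give a bound $\gtrsim 1/M$), but in the final bookkeeping the bound $|G'|\leq C(1+\ln(1+M/v_\Lambda))$ needs either Lemma \ref{lem:Fx2_estimate_2} (uniform upper bound on $-F_{x_2}$, with the constant then depending on $\Lambda$) or, more simply, you can pass to the limit in $\int_{x_*}^{1-\varepsilon}G'\geq c_*v_\Lambda(x_*)-Cv_\Lambda(x_*)/\Lambda$ using only $-F_{x_2}\geq 0$ and the continuity of $G$ at $x=1$, which follows from $\phi\in C^1(\mathbb{H})$ and $\phi|_{x_2=0}=0$.
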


\begin{proof}
There is nothing to prove if $f(x_*)=M\leq v_\Lambda(x_*)$, since Lemma \ref{lem:Constant_upper_bound} guarantees that $\mtx{R}(f)(x_*)\leq \mtx{R}(f)(0)<M$. Hence, we can assume $x_*\in[0,1)$ is such that $f(x_*)=v_\Lambda(x_*)<M$. Observe that $W(t)$ is increasing in $t$ and $W(t)\leq t$ for $t\geq 0$, which means $W^{-1}(s)\geq s$ for $s\geq 0$, and thus
\[
v_\Lambda(x) = M\cdot W^{-1}\left(\frac{\Lambda(1-|x|)}{M}\right)\geq \Lambda(1-|x|),\quad x\in[-1,1].
\]
We thus can make $\Lambda$ large enough, e.g. $\Lambda\geq 5M$, so that for $x\in[0,1)$, $v_\Lambda(x)\geq 4M(1-x)$. Note that $v_\Lambda(x)$ is decreasing in $x\in[0,1]$. This means the point $x_*\in[0,1)$ where $f(x_*)=v_\Lambda(x_*)<M$ can only fall in $(3/4,1)$.

We prove by contradiction. Write $\tilde f=\mtx{R}(f)$. Suppose $\tilde f(x_*)\geq f(x_*)\geq 4M(1-x_*)$. Recall $M>1$. Then by the continuity and monotonicity of $\tilde f$, there exists some unique $a,b\in[x_*,1)$ such that $a<b$, $\tilde f(a) = f(x_*)$, and $\tilde f(b) = 2(1-x_*)$. Note that for any $x\in[a,b]\subset[x_*,1)$, $x-x_*\leq 1-x_*$, and $2(1-x_*)\leq \tilde f(x) \leq f(x_*)$. Since $[0,x_*]\times [0,f(x_*)]\subset\Omega _f:=\{(x_1,x_2): x_1\in[-1,1], x_2\in[0,f(x_1)]\}$, for any $x\in [a,b]$, 
\begin{align*}
& \Omega_f\cap \big([x-\tilde f(x),x+\tilde f(x)]\times [0,\tilde f(x)]\big) \\
&\supset \big([0,x_*]\times [0,f(x_*)]\big)\cap \big([x-\tilde f(x),x+\tilde f(x)]\times [0,\tilde f(x)]\big)\\
&\supset [\max\{0,x-\tilde f(x)\},x_*]\times [0,\tilde f(x)].
\end{align*}
Let us use this to bound $-\tilde f'(x)$ from above for $x\in[a,b]$. Recall \eqref{eqt:Fx2_estimate_step2} in the proof of Lemma \ref{lem:Fx2_estimate}, which now implies that
\begin{align*}
- F_{x_2}(x,\tilde f(x);f) &\geq C\int_{\Omega_f\cap ([x-\tilde f(x),x+\tilde f(x)]\times [0,\tilde f(x)])}\frac{y_2}{\tilde f(x)^3}\idiff \vy\\
&\geq C\int_{\max\{0,x-\tilde f(x)\}}^{x_*}\idiff y_1 \int_0^{\tilde f(x)}\frac{y_2}{\tilde f(x)^3}\idiff y_2\\
&\geq C\cdot \frac{\min\{x_*\,,\,\tilde f(x) +x_*-x\}}{\tilde f(x)}\\
&\geq C\cdot \frac{\min\{x_*\,,\,\tilde f(x)/2\}}{\tilde f(x)}\\
&\geq C\cdot \min\left\{\frac{2x_*}{M}\,,\,1\right\}\geq C,
\end{align*}
for some absolute constant $C>0$ (that only depends on $M$). We have used above the fact that $x_*\geq 3/4$ and $2(x-x_*)\leq \tilde f(x)\leq M$. Besides, we can use the first upper bound in Lemma \ref{lem:Fx1_estimate} to obtain 
\[ - F_{x_1}(x,\tilde f(x);f) \leq C\left(1+\ln\left(1+\frac{f(0)}{\tilde f(x)}\right)\right)\leq C\left(1+\ln\left(1+\frac{M}{\tilde f(x)}\right)\right),\]
for some absolute constant $C>0$. Combining these estimates, we obtain 
\[-\tilde f'(x) = \frac{F_{x_1}(x,\tilde f(x);f)}{F_{x_2}(x,\tilde f(x);f)} \leq C \left(1+\ln\left(1+\frac{M}{\tilde f(x)}\right)\right).\]
Then, by the definition of $W$, 
\[-\frac{\diff\, }{\diff x}W\left(\frac{\tilde f(x)}{M}\right)= W'\left(\frac{\tilde f(x)}{M}\right)\cdot \frac{-\tilde f'(x)}{M} = \frac{1}{1+\ln(1+M/\tilde f(x))}\cdot \frac{-\tilde f'(x)}{M}\leq \frac{C}{M}, \]
and thus
\[W\left(\frac{f(x_*)}{M}\right) - W\left(\frac{2(1-x_*)}{M}\right) = W\left(\frac{\tilde f(a)}{M}\right) - W\left(\frac{\tilde f(b)}{M}\right)\leq \frac{C(b-a)}{M}\leq \frac{C(1-x_*)}{M}.\]
Since $f(x_*)=v_\Lambda(x_*)$, by the definition of $v_\Lambda$, the above can be simplified to 
\[\frac{\Lambda(1-x_*)}{M} - W\left(\frac{2(1-x_*)}{M}\right)\leq \frac{C(1-x_*)}{M},\]
for some constant $C$ that does not depend on $\Lambda$ or $x_*$. Recall $W(t)\leq t$ for $t\geq0$. It follows that
\[\frac{\Lambda(1-x_*)}{M}\leq W\left(\frac{2(1-x_*)}{M}\right) + \frac{C(1-x_*)}{M}\leq (2+C)\frac{(1-x_*)}{M}.\]
However, this cannot happen if we choose $\Lambda$ sufficiently large. The lemma is thus proved.
\end{proof}

Next, we establish a lower barrier property of $\mtx{R}$ in a similar fashion. To this end, we will need to bound $F(x,f(x);f)$ from below as in the following lemma. 

\begin{lemma}\label{lem:F_lower_bound}
Given any $f\in \mathbb{M}_1$, if $f(0)\leq M$, then for any $x\in[0,1)$, 
\begin{align*}
F(x,f(x);f) &\geq \tilde C\int_0^{f(x)}\left(1-\frac{y}{f(x)}\right)\arctan\left(\frac{1-f^{-1}(y)}{y}\right)\idiff y + \tilde C\int_{f(x)}^{f(0)}\frac{(1-x)f^{-1}(y)y}{(1-f^{-1}(y))^2+y^2}\idiff y\\
&\quad - \bar C \frac{f(x)^2}{\max\{1-x\,,f(x)\}} - \bar C \int_0^{f(x)}\frac{f(x)}{x+f(x)+f^{-1}(y)}\idiff y,
\end{align*}
where $\tilde C, \bar C>0$ are some absolute constants.
\end{lemma}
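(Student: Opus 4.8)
We are given $f\in\mathbb{M}_1$, so that $g:=f^{-1}$ is well defined and decreasing on $[0,f(0)]$, with $g(0)=1$, $g(f(0))=0$ and $g(f(x))=x$. The starting point is the identity
\[
F(x,f(x);f)=\frac{\phi(x,f(x);f)}{f(x)}-c(f)=h(x)-E(x),
\]
obtained from $\phi(x,0;f)=0$ by writing $\phi(x,f(x);f)=\int_0^{f(x)}\phi_{x_2}(x,s;f)\,ds$, where
\[
h(x):=\phi_{x_2}(x,0;f)-c(f)=F(x,0;f),\qquad
E(x):=\frac1{f(x)}\int_0^{f(x)}\!\bigl(\phi_{x_2}(x,0;f)-\phi_{x_2}(x,s;f)\bigr)\,ds,
\]
and $h>0$ on $(-1,1)$ by Lemma \ref{lem:F_onR}. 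The plan is to prove $(\mathrm i)$ $h(x)\ge\tilde C$ times the two positive integrals in the statement, and $(\mathrm{ii})$ $0\le E(x)\le\bar C$ times the two error integrals, the hypothesis $f(0)\le M$ being used throughout to keep all constants absolute.

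\emph{Step (i).} Using the formulas of Appendix \ref{apx:formulas} (equivalently, the computation in the proof of Lemma \ref{lem:F_onR}), $\pi\phi_{x_2}(x,0;f)=\int_{\Omega_f}\frac{t\,dy_1\,dt}{(x-y_1)^2+t^2}$ and $\pi c(f)=\int_{\Omega_f}\frac{t\,dy_1\,dt}{(1-y_1)^2+t^2}$; writing $\Omega_f=\{0\le t\le f(0),\ |y_1|\le g(t)\}$ and integrating in $y_1$,
\[
\pi h(x)=\int_0^{f(0)}\!\Bigl(\arctan\tfrac{g(t)-x}{t}+\arctan\tfrac{1-g(t)}{t}\Bigr)dt+\int_0^{f(0)}\!\Bigl(\arctan\tfrac{g(t)+x}{t}-\arctan\tfrac{g(t)+1}{t}\Bigr)dt .
\]
The second integral — the reflected-patch term — has, by the arctangent subtraction identity, magnitude $\int_0^{f(0)}\arctan\frac{(1-x)t}{t^2+(g(t)+x)(g(t)+1)}\,dt$; it is of lower order (there is no singularity, since $|\bar{\vx}-\vy|\ge f(x)$ on $\Omega_f$) and will be absorbed into the error terms of Step (ii). In the first integral I split at $t=f(x)$: for $t\le f(x)$ one has $g(t)\ge x$, so the integrand is $\ge\arctan\frac{1-g(t)}{t}\ge(1-t/f(x))\arctan\frac{1-g(t)}{t}$, giving the first main integral; for $t\ge f(x)$ one has $0\le g(t)\le x\le1$ and the subtraction identity turns the integrand into $\arctan\frac{(1-x)t}{t^2+(1-g(t))(x-g(t))}$, which — using $g\le1$, $x-g\le1-g$, and elementary $\arctan$ inequalities (the second main integrand is $\le\tfrac12$ and no larger than this argument) — is $\gtrsim\frac{(1-x)g(t)\,t}{(1-g(t))^2+t^2}$, giving the second main integral after the substitution $t=f(y)$.

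\emph{Step (ii).} A direct computation from the explicit kernel for $\phi_{x_2}$ gives, with $u=x-y_1$, $P=u^2+s^2+f(y_1)^2$, $\alpha=s^2f(y_1)^2/(u^2P)\ge0$, $\beta=s^2f(y_1)^2/P^2$ and $4\beta\le1$ (since $P\ge2sf(y_1)$),
\[
\phi_{x_2}(x,0;f)-\phi_{x_2}(x,s;f)=\frac1{4\pi}\int_{-1}^1\bigl(2\ln(1+\alpha)-\ln(1-4\beta)\bigr)\,dy_1\ \ge\ 0,
\]
so in particular $E(x)\ge0$. For the upper bound, estimate $\ln(1+\alpha)\le\alpha$ and $-\ln(1-4\beta)=\ln\frac{P^2}{(u^2+(s-f(y_1))^2)(u^2+(s+f(y_1))^2)}\le\ln\!\bigl(1+\frac{2sf(y_1)}{u^2+(s-f(y_1))^2}\bigr)$, integrate in $s\in[0,f(x)]$, and split the $y_1$-integral at $|y_1-x|\sim f(x)$. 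On the far part the logarithm is mild, and reverting to the inverse variable yields the bound $\int_0^{f(x)}\frac{f(x)}{x+f(x)+f^{-1}(y)}\,dy$; on the near part — where the genuine singularity of $-\ln(1-4\beta)$ as $(y_1,s)\to(x,f(x))$ sits — one exploits that $\Omega_f$ lies to the left of $x$ near height $f(x)$ (that is, $f^{-1}(y)\ge x$ for $y\le f(x)$), so that only a limited amount of mass lies within distance $\sim f(x)$ of $(x,f(x))$, and one splits into the two regimes $f(x)\lesssim1-x$ (the patch is locally blunt at $(x,f(x))$) and $f(x)\gtrsim1-x$ ($x$ near an endpoint, the patch is locally steep); in both one arrives at the bound $\frac{f(x)^2}{\max\{1-x,f(x)\}}$. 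The same dichotomy disposes of the leftover reflected-patch term from Step (i).

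\emph{The main obstacle} is this near-diagonal estimate: obtaining precisely $\frac{f(x)^2}{\max\{1-x,f(x)\}}$ requires a careful description of $\Omega_f$ on the scale $f(x)$ about $(x,f(x))$ together with the case split $1-x\gtrless f(x)$, and it is what forces both the splitting point $t=f(x)$ and the exact weights $(1-y/f(x))$ and $(1-f^{-1}(y))^2+y^2$ in the two main integrals. Everything else — the elementary arctangent inequalities, the two changes of variables between $y$ and $f^{-1}(y)$, and the control of the reflected patch — is routine but lengthy.
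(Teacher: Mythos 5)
Your identity $F(x,f(x);f)=h(x)-E(x)$ and the arctangent representation of $\pi h(x)$ are correct, and your pointwise bound of $\arctan\frac{(1-x)t}{t^2+(1-g)(x-g)}$ by the second main integrand is fine. But the two absorption claims on which the whole plan rests are false, and they fail for concrete admissible $f$. First, the reflected-patch term $R=\int_0^{f(0)}\arctan\frac{(1-x)t}{t^2+(g(t)+x)(g(t)+1)}\,dt$ is \emph{not} of lower order relative to the error terms: take $x=1/2$, $f(0)$ comparable to $M$, and $f(x)=\epsilon$ tiny (e.g.\ $f$ dropping from $M$ to $\epsilon$ on $[0,1/2]$). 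Then $R$ is bounded below by an absolute constant (the integrand is $\gtrsim\arctan\frac{t/2}{t^2+1}$ over a range of $t$ of length $\sim M$), while both error terms are $O(\epsilon^2)$. So $R$ cannot be absorbed into the errors; it must be combined with the direct contribution $\arctan\frac{g-x}{t}+\arctan\frac{1-g}{t}$ on $t\ge f(x)$ \emph{before} estimating, because it is precisely the cancellation between these two (producing the factor $2g(t)(1+x)$ after the subtraction identity) that yields the second main integral. This is what the paper's $J_{2,1}$ does; bounding the two pieces separately destroys the estimate, since each is individually $O(1)$ while their difference must be compared with main terms that can be arbitrarily small (e.g.\ when $g(t)$ is small over most of the patch height).

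Second, Step (ii) is also false as stated: $E(x)\le\bar C(\text{error terms})$ fails for a nearly flat patch. Take $f\approx\epsilon$ on $[0,1-\delta]$ with $x=1/2$: then $\phi_{x_2}(x,s)\approx\epsilon-s$ for $0\le s\le\epsilon$ (infinite-strip approximation, with $O(\epsilon^2)$ corrections), so $E(x)\approx\epsilon/2$, whereas $\frac{f(x)^2}{\max\{1-x,f(x)\}}+\int_0^{f(x)}\frac{f(x)}{x+f(x)+f^{-1}(y)}\,dy=O(\epsilon^2)$. The inequality of the lemma is tight to second order in $f(x)$ in this regime: the $O(\epsilon)$ part of $E$ must cancel against the $\arctan\frac{g(t)-x}{t}$ contribution to $h$ on $t\le f(x)$ — exactly the piece your Step (i) discards when you lower-bound the small-$t$ integrand by $(1-t/f(x))\arctan\frac{1-g(t)}{t}$ alone. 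So the decomposition $F=h-E$ with $h$ and $E$ estimated separately cannot produce the stated bound. The paper's proof avoids both problems by never separating these cancelling quantities: it writes $\phi/x_2$ as a $z$-average of arctangents over $[x_2-y_2,x_2+y_2]$, uses convexity of $z\mapsto\arctan(a/z)$ to compare that average with the $c(f)$ integrand at $z=y_2$ (its $J_1$, which retains the $\arctan\frac{f^{-1}(y_2)\pm x}{x_2}$ terms), and combines direct and reflected arctangents into single differences before estimating ($J_{2,1}$, $J_{2,2}$). To salvage your approach you would have to re-introduce both cancellations, at which point you are essentially reproducing the paper's argument.
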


We delay the lengthy proof of Lemma \ref{lem:F_lower_bound} to Appendix \ref{apx:proofs}. We now use this lemma to derive the following lower barrier property for $\mtx{R}$.

\begin{lemma}\label{lem:lower_barrier}
Let $v_\Lambda$ be defined as in \eqref{eqt:v_definition} with $\Lambda$ determined in Lemma \ref{lem:upper_barrier}, and let $u_\lambda$ be defined as
\begin{equation}\label{eqt:u_definition}
u_\lambda(x) := \lambda(1-x^2).
\end{equation}
There is some sufficiently small $\lambda>0$ such that, if $f\in \mathbb{M}_0$ satisfies $u_\lambda(x)\leq f(x)\leq \min\{v_\Lambda(x)\,,M\}$ on $[-1,1]$ and $f(x_*)=u_\lambda(x_*)$ at some $x_*\in (-1,1)$, then $\mtx{R}(f)(x_*) > u_\lambda(x_*)$.
\end{lemma}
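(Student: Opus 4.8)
The plan is to convert the claim into a sign condition on $F$ and then feed Lemma~\ref{lem:F_lower_bound}. Since $F(x,\cdot\,;f)$ is strictly decreasing on $(0,\infty)$ for each fixed $x$ (Lemma~\ref{lem:F_partial}) and $F(x_*,\mtx{R}(f)(x_*);f)=0$ by the very definition of $\mtx{R}$, the desired strict inequality $\mtx{R}(f)(x_*)>u_\lambda(x_*)$ is \emph{equivalent} to $F\big(x_*,u_\lambda(x_*);f\big)>0$. Using the density of $\mathbb{M}_1$ in $\mathbb{M}_0$ and the continuity of $\mtx{R}$ (Proposition~\ref{prop:R_continuity}), together with evenness, it suffices to treat $f\in\mathbb{M}_1$ and $x_*\in[0,1)$; and $f(0)\le M$ is automatic since $f\le\min\{v_\Lambda,M\}$, so Lemma~\ref{lem:F_lower_bound} applies at $x=x_*$ with $h:=f(x_*)=u_\lambda(x_*)=\lambda(1-x_*^2)\le\lambda$. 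I would then translate the two barrier constraints into pointwise bounds on $f^{-1}$ by comparing monotone functions: $f\ge u_\lambda$ gives $f^{-1}(y)\ge u_\lambda^{-1}(y)=\sqrt{1-y/\lambda}$ for $y\in[0,\lambda]$, while $f\le v_\Lambda$ gives $f^{-1}(y)\le v_\Lambda^{-1}(y)$, i.e.\ $1-f^{-1}(y)\ge 1-v_\Lambda^{-1}(y)=\tfrac{M}{\Lambda}W(y/M)$, and hence (from $W(t)\ge t/(2(1+\ln(1+2/t)))$ and $\arctan z\ge\tfrac{\pi}{4}z$ on $[0,1]$) a lower bound $\arctan\!\big(\tfrac{1-f^{-1}(y)}{y}\big)\ge c/(1+\ln(1+M/y))$ with $c>0$ depending only on the fixed $\Lambda$.

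Next I would estimate the four terms on the right-hand side of Lemma~\ref{lem:F_lower_bound}, writing them $P_1,P_2$ (positive) and $N_1,N_2$ (negative). For the negatives: $h\le 2\lambda(1-x_*)$, so for $\lambda<1/2$ one has $\max\{1-x_*,h\}=1-x_*$ and $N_1\le\bar C h^2/(1-x_*)\le 2\bar C\lambda h$; for $N_2$, inserting $f^{-1}(y)\ge\sqrt{1-y/\lambda}$ and substituting $y=\lambda(1-s^2)$ gives $N_2\le 2\bar C\lambda h\int_{x_*}^1\frac{s}{x_*+h+s}\,ds\le 2\bar C\lambda h(1-x_*)$. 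Thus $N_1+N_2\le C_0\lambda h$ — with a \emph{crucial} extra factor $\lambda$. For the first positive term, the arctangent bound above yields, after rescaling $y=ht$ and keeping only $t\in[1/2,1]$, $P_1\ge c_1\, h/\big(1+\ln(1+4M/h)\big)$. Since $C_0\lambda h$ carries an extra $\lambda$, this already gives $P_1>N_1+N_2$ as soon as $h$ is not exponentially small in $1/\lambda$, i.e.\ for every $x_*$ outside a tiny neighborhood $[1-\theta(\lambda),1)$ of $1$, where $\theta(\lambda)\to0$ as $\lambda\to0$.

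It remains to handle $x_*$ extremely close to $1$, where $h$ is so small that the logarithm kills $P_1$. There $1-x_*$ is tiny, so $N_1+N_2$ is only of order $\lambda^2(1-x_*)$ (comparable to $\lambda h$), and the required positivity must come from $P_2$. The move is to change variables $y=f(x)$, writing $P_2=\tilde C(1-x_*)\int_0^{x_*}\frac{x f(x)(-f'(x))}{(1-x)^2+f(x)^2}\,dx$, and then to work on the sub-interval adjacent to $x_*$ on which $f(x)\le 1-x$ — nonempty because $f(x_*)=h\ll 1-x_*$ — where the denominator collapses to order $(1-x)^2$ and $f(x)\ge u_\lambda(x)$ is of order $\lambda(1-x)$; this produces a contribution of order $\lambda^2(1-x_*)$, of the same order as $N_1$ but (for $\lambda$ small) with a strictly larger constant. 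Collecting the estimates gives $F(x_*,u_\lambda(x_*);f)\ge P_1+P_2-N_1-N_2>0$ for all sufficiently small $\lambda>0$, which is exactly $\mtx{R}(f)(x_*)>u_\lambda(x_*)$, and one fixes $\lambda$ at the very end.

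The main obstacle I anticipate is precisely this last regime. The balance among the four terms is not uniform in $x_*$: away from the endpoints $P_1$ alone suffices, but near $x_*\to1$ one cannot afford the crude bound $(1-x)^2+f(x)^2\le 1+M^2$ inside $P_2$ — because $f$ on $[0,x_*]$ may sit far above the lower barrier $u_\lambda$ — and must instead exploit the local collapse of that denominator near $x_*$, after which one still has to verify that the emerging absolute constant genuinely dominates $\bar C$. Arranging these constant comparisons so that they hold simultaneously in the near-$0$, intermediate, and near-$1$ ranges of $x_*$ is the delicate step; once Lemma~\ref{lem:F_lower_bound} is in hand, everything else is bookkeeping.
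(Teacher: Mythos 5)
Your skeleton coincides with the paper's: reduce via Lemma \ref{lem:F_partial} to showing $F(x_*,u_\lambda(x_*);f)>0$, invoke Lemma \ref{lem:F_lower_bound} (legitimate since $f(0)\le M$), and convert the two barriers into $f^{-1}(y)\ge(1-y/\lambda)^{1/2}$ and $1-f^{-1}(y)\ge\frac{M}{\Lambda}W(y/M)$. Your bounds for the two negative terms (order $\lambda h\simeq\lambda^2(1-x_*)$) and for the first positive term (order $h/\ln(1+M/h)$), and the conclusion that the first positive term alone suffices unless $1-x_*$ is exponentially small in $1/\lambda$, are all sound and essentially what the paper does.

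The gap is the regime $x_*$ close to $1$, and it is not mere bookkeeping. Your lower bound there for the second positive term is of order $\lambda^2(1-x_*)$ with an absolute constant assembled from $\tilde C$, the factor $\lambda$ coming from $f\ge u_\lambda$, and the total variation of $f$ (again of size $\lambda$); the negative terms are of order $\lambda^2(1-x_*)$ with a constant proportional to $\bar C$. Both sides scale identically in $\lambda$, so your assertion that the comparison holds ``for $\lambda$ small'' has no mechanism behind it: shrinking $\lambda$ multiplies both sides by the same factor, and nothing forces the constant produced by your subinterval argument to dominate $\bar C$ (you flag this yourself, but the proof cannot close there). What is needed is a lower bound for the second positive term carrying a factor that blows up as $x_*\to1$. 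The paper gets exactly this: using only $1-f^{-1}(y)\le y/\lambda$ and integrating over the window $y\in[\lambda(1-x_*^2),\lambda(1-x_*^2/2)]$ gives a bound $C\lambda^2(1-x_*)\,x_*^2\ln\big(1+\frac{x_*^2}{1-x_*}\big)$, whose logarithm beats the fixed constant $C_*=C_1/C_2$ once $x_*$ exceeds a $\lambda$-independent threshold $\delta_*$; the smallness of $\lambda$ is then used only for $x_*\le\delta_*$, through the first positive term. Your change of variables $y=f(x)$ could in principle recover such a logarithmic factor (for instance, on the set near $x_*$ where $f\le 1-x$, writing $\int\frac{-f'(x)}{1-x}\,dx$ via integration by parts against the weight $(1-x)^{-2}$ and using $f\ge\lambda(1-x)$ produces $\lambda\ln\frac{1}{1-x_*}$ up to boundary terms, which take over when that set is short), but as written your proposal does not extract it, and without it the final constant-versus-constant comparison is unjustified.
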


\begin{proof}
Without loss of generality, we may assume that $\lambda\leq 1/2$ and $x_*\in[0,1)$. We only need to prove the Lemma for $f\in \mathbb{M}_1$. 

In view of Lemma \ref{lem:F_partial}, it suffices to show that $F(x_*,f(x_*);f)> 0$. By Lemma \ref{lem:F_lower_bound}, this further boils down to verifying, for sufficiently small $\lambda$,  
\begin{equation}\label{eqt:lower_barrier_step}
\begin{split}
&\tilde C\int_0^{f(x_*)}\left(1-\frac{y}{f(x_*)}\right)\arctan\left(\frac{1-f^{-1}(y)}{y}\right)\idiff y + \tilde C\int_{f(x_*)}^{f(0)}\frac{(1-x_*)f^{-1}(y)y}{(1-f^{-1}(y))^2+y^2}\idiff y\\
&>  \bar C \frac{f(x_*)^2}{\max\{1-x_*\,,f(x_*)\}} + \bar C \int_0^{f(x_*)}\frac{f(x_*)}{x_*+f(x_*)+f^{-1}(y)}\idiff y,
\end{split}
\end{equation}
where $\tilde C,\bar C>0$ are the absolute constants given in Lemma \ref{lem:F_lower_bound}. 

We first handle the right-hand side above. By the assumption that $f(x)\geq u_\lambda(x)$ for $x\in[0,1]$, we have
\[f^{-1}(y) \geq u_\lambda^{-1}(y) = \left(1-\frac{y}{\lambda}\right)^{1/2}.\]
Hence, with $f(x_*) = u_\lambda(x_*) = \lambda(1-x_*^2)$, we obtain
\begin{align*}
\int_0^{f(x_*)}\frac{f(x_*)}{x_*+f(x_*)+f^{-1}(y)}\idiff y &\leq f(x_*)\int_0^{f(x_*)}\frac{1}{\left(1-\frac{y}{\lambda}\right)^{1/2}}\idiff y\\
&= \lambda f(x_*) \int_0^{f(x_*)/\lambda }(1-s)^{-1/2}\idiff s \\
&=  2\lambda f(x_*)(1-x_*) \leq C\lambda^2(1-x_*)^2.
\end{align*}
Besides, $f(x_*) = \lambda(1-x_*^2)$ implies $\max\{1-x_*\,,f(x_*)\} = 1-x_*$. Thus, the right-hand side of \eqref{eqt:lower_barrier_step} can be bounded from above as
\begin{align*}
&\bar C \frac{f(x_*)^2}{\max\{1-x_*\,,f(x_*)\}} + \bar C \int_0^{f(x_*)}\frac{f(x_*)}{x_*+f(x_*)+f^{-1}(y)}\idiff y\\
&\leq \frac{Cf(x_*)^2}{1-x_*} + C\lambda^2(1-x_*)^2\leq  C_1\lambda^2(1-x_*).
\end{align*}

Next, we control the left-hand side of \eqref{eqt:lower_barrier_step} from below. By the assumption $f(x)\leq v_\Lambda(x)$, we have
\[1-f^{-1}(y)\geq 1- v_\Lambda^{-1}(y) =\frac{M}{\Lambda}W\left(\frac{y}{M}\right). \]
Since $\Lambda\geq 1$ and 
\[t\geq W(t)=\int_0^t\frac{1}{1+\ln(1+1/s)}\idiff s\geq \frac{t/2}{1+\ln(1+2/t)},\]
we obtain 
\begin{align*}
\tilde C\int_0^{f(x_*)}\left(1-\frac{y}{f(x_*)}\right)\arctan\left(\frac{1-f^{-1}(y)}{y}\right)\idiff y&\geq C\int_0^{f(x_*)}\left(1-\frac{y}{f(x_*)}\right)\arctan\left(\frac{W(y/M)}{\Lambda y/M}\right)\idiff y \\
&\geq C\int_{f(x_*)/3}^{2f(x_*)/3}\frac{W(y/M)}{\Lambda y/M}\idiff y \\
&\geq  \frac{C}{\Lambda}\int_{f(x_*)/3}^{2f(x_*)/3}\frac{1}{1+\ln(1+2M/y)}\idiff y \\
&\geq \frac{C}{\Lambda}\cdot \frac{f(x_*)}{1+\ln(1+6M/f(x_*))}\\
&\geq C_2 \cdot \frac{\lambda(1-x_*)}{\Lambda\ln\left(1+\frac{M}{\lambda(1-x_*)}\right)}.
\end{align*}
Furthermore, under the assumption $f(x)\geq u_\lambda(x)=\lambda (1-x^2)$,
\[1-f^{-1}(y)\leq 1-u_\lambda^{-1}(y)\leq 1-\left(1-\frac{y}{\lambda}\right)^{1/2}\leq \frac{y}{\lambda}.\]  
It follows that
\begin{align*}
\tilde C \int_{f(x_*)}^{f(0)}\frac{(1-x_*)f^{-1}(y)y}{(1-f^{-1}(y))^2+y^2}\idiff y &\geq C (1-x_*)\int_{\lambda(1-x_*^2)}^{\lambda(1-x_*^2/2)}\frac{(1-y/\lambda)y}{y^2/\lambda^2+y^2}\idiff y\\
&= C \frac{\lambda^2}{1+\lambda^2}(1-x_*)\int_{1-x_*^2}^{1-x_*^2/2}\frac{1-s}{s}\idiff s\\
&\geq C \frac{\lambda^2}{1+\lambda^2}(1-x_*)\cdot \frac{x_*^2}{2} \int_{1-x_*^2}^{1-x_*^2/2}\frac{1}{s}\idiff s\\
&= C \frac{\lambda^2}{1+\lambda^2}(1-x_*)\cdot \frac{x_*^2}{2} \ln\left(1+\frac{x_*^2/2}{1-x_*^2}\right)\\
&\geq C_2 \lambda^2(1-x_*) \cdot x_*^2\ln\left(1+\frac{x_*^2}{1-x_*}\right).
\end{align*}

With all the estimates above, in order to prove \eqref{eqt:lower_barrier_step}, it now suffices to show that, when $\lambda$ is sufficiently small,
\[I_\lambda:= \frac{1}{\Lambda\lambda \ln\left(1+\frac{M}{\lambda(1-x_*)}\right)} + x_*^2\ln\left(1+\frac{x_*^2}{1-x_*}\right)- C_* > 0, \]
where $C_*=C_1/C_2$ is some absolute constant. In fact, if $x_*\geq \delta_*:=\max\{1/2\,,1-\econst^{-4C_*}/4\}$, then
\[I_\lambda \geq x_*^2\ln\left(1+\frac{x_*^2}{1-x_*}\right)- C_* \geq \frac{1}{4}\ln\left(1+\econst^{4C_*}\right)- C_* >0.\]
Otherwise, if $x_*\leq \delta_*<1$, then
\[I_\lambda\geq \frac{1}{\Lambda\lambda \ln\left(1+\frac{M}{\lambda(1-x_*)}\right)} - C_*\geq \frac{1}{\Lambda\lambda \ln\left(1+\frac{M}{\lambda(1-\delta_*)}\right)} - C_* >0,\]
provided that $\lambda$ is sufficiently small, with the smallness only depending on $M$, $\Lambda$, and $C_*$. The Lemma is thus proved.
\end{proof}

In what follows, $v_{\Lambda}$ and $u_\lambda$ will always be defined as in \eqref{eqt:v_definition} and \eqref{eqt:u_definition} with the constants $\Lambda$ and $\lambda$ determined in Lemma \ref{lem:upper_barrier} and \ref{lem:lower_barrier}, respectively.

\subsection{$C^{1/2}$-estimate}
We provide a simple $C^{1/2}$-estimate for $\mtx{R}(f)$ provided that $f\in \mathbb{M}_0$ is lower bounded by some appropriate profile. In fact, we can establish a uniform $C^\alpha$-bound for $\mtx{R}(f)$ in a similar way for any $\alpha\in(0,1)$, but we only need, for example, a uniform $C^{1/2}$-estimate to conclude the compactness of $\mtx{R}$ in the $L^\infty$-topology.

\begin{proposition}\label{prop:C_alpha}
Given any $f\in\mathbb{M}_0$, if $d(1-|x|)\leq f(x)\leq M$ on $[-1,1]$ for some constant $d>0$, then $|\mtx{R}(f)|_{C^{1/2}}\leq C_{M,d}$ for some constant $C_{M,d}$ that only depends on $M$ and $d$.
\end{proposition}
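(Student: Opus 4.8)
The plan is to control $\tilde f := \mtx{R}(f)$ through its derivative, using a change of variables that absorbs the logarithmic blow-up of $\tilde f'$ near the endpoints $x=\pm1$. Recall first that $\tilde f\in\mathbb{M}_1$ by Proposition \ref{prop:implicit_existence}, so $\tilde f$ is even, continuous on $[-1,1]$ with $\tilde f(\pm1)=0$, non-increasing on $[0,1]$ and $C^1$ on $(-1,1)$; and since $f(0)\le M$, Lemma \ref{lem:Constant_upper_bound} gives $\tilde f(x)\le \tilde f(0)<M$ on $[-1,1]$. As in the proof of Proposition \ref{prop:implicit_existence}, on $(0,1)$ one has $\tilde f'(x)=-F_{x_1}(x,\tilde f(x);f)/F_{x_2}(x,\tilde f(x);f)$. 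Plugging the first bound of Lemma \ref{lem:Fx1_estimate} (with $f(0)\le M$) into the numerator and Lemma \ref{lem:Fx2_estimate} (using the hypothesis $f(x)\ge d(1-|x|)$ together with $\tilde f(x)<M$, so that $\max\{1,\tilde f(x)^3\}\le M^3$) into the denominator yields a pointwise bound of the form
\[
|\tilde f'(x)|\;\le\; C_{M,d}\Bigl(1+\ln\bigl(1+M/\tilde f(x)\bigr)\Bigr),\qquad x\in(0,1),
\]
with $C_{M,d}$ depending only on $M$ and $d$. Since $\tilde f(x)\to0$ as $x\to1^-$, this is not a Lipschitz bound, which is why we pass to a new variable.

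Next, set $g(x):=W\bigl(\tilde f(x)/M\bigr)$ with $W$ the function from \eqref{eqt:v_definition}, so that $W'(t)=\bigl(1+\ln(1+1/t)\bigr)^{-1}$. Then $g$ is even, continuous on $[-1,1]$, and $C^1$ on $(-1,1)$ with
\[
|g'(x)|\;=\;\frac{|\tilde f'(x)|}{M\bigl(1+\ln(1+M/\tilde f(x))\bigr)}\;\le\;\frac{C_{M,d}}{M}
\]
by the previous display. A function that is $C^1$ on the open interval with bounded derivative and continuous on $[-1,1]$ is Lipschitz there, so $g$ is Lipschitz on $[-1,1]$ with a constant depending only on $M$ and $d$.

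It remains to transfer this back to $\tilde f=M\,W^{-1}(g)$. The key point is that $W'$ is increasing, so $W$ is convex with $W(0)=0$, hence superadditive: $W(b)-W(a)\ge W(b-a)$ whenever $0\le a\le b\le1$. Combined with the elementary lower bound $W(t)\ge \tfrac{t/2}{1+\ln(1+2/t)}\ge t^2/(2K)$ on $[0,1]$, where $K:=\sup_{t\in[0,1]}t\bigl(1+\ln(1+2/t)\bigr)<\infty$ (this bound already appears in the proof of Lemma \ref{lem:lower_barrier}), we obtain $b-a\le\bigl(2K\,(W(b)-W(a))\bigr)^{1/2}$. Applying this with $a=\tilde f(x)/M$ and $b=\tilde f(x')/M$ for $0\le x'\le x\le1$ and invoking the Lipschitz bound on $g$ gives $|\tilde f(x)-\tilde f(x')|\le C_{M,d}\,|x-x'|^{1/2}$. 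The case of general $x,x'\in[-1,1]$ reduces to this by evenness of $\tilde f$ together with $\bigl||x|-|x'|\bigr|\le|x-x'|$. Since also $\|\tilde f\|_\infty\le M$, this establishes the claimed $C^{1/2}$-bound for $\mtx{R}(f)$.

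I expect the only genuinely subtle point to be the behavior near $x=\pm1$: there $\tilde f'$ diverges, but precisely at a logarithmic rate, and the auxiliary function $W$ — already introduced for the upper barrier in Lemma \ref{lem:upper_barrier} — is tailored so that $W(\tilde f/M)$ has bounded derivative, while its convexity and the lower bound above keep it close enough to linear that Lipschitz control of $W(\tilde f/M)$ upgrades to Hölder-$1/2$ control of $\tilde f$ itself. Everything else — substituting Lemmas \ref{lem:Fx1_estimate} and \ref{lem:Fx2_estimate} into the formula for $\tilde f'$ and applying the chain rule — is routine.
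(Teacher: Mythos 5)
Your proof is correct, and it follows the same skeleton as the paper's (implicit-function formula $\tilde f'=-F_{x_1}/F_{x_2}$, estimated via Lemmas \ref{lem:Fx1_estimate} and \ref{lem:Fx2_estimate}, then an integration step), but the two key choices differ. The paper takes the \emph{second} branch of the min in Lemma \ref{lem:Fx1_estimate}, namely $|F_{x_1}|\le x_1/x_2\le 1/\tilde f(x)$, which gives $-(\tilde f^2)'\le C_{M,d}$ and hence the H\"older-$1/2$ bound immediately upon integrating; no auxiliary function is needed. You instead take the \emph{first} (logarithmic) branch, obtaining $|\tilde f'|\le C_{M,d}\bigl(1+\ln(1+M/\tilde f)\bigr)$, which is sharper near the tips (it is essentially the same bound used inside the proof of Lemma \ref{lem:upper_barrier}), and then you must spend extra work converting Lipschitz control of $W(\tilde f/M)$ back into H\"older-$1/2$ control of $\tilde f$; your convexity/superadditivity argument for $W$ together with the quadratic lower bound $W(t)\ge t^2/(2K)$ does this correctly, and the evenness reduction at the end is fine. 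What your route buys is a stronger intermediate statement (a log-Lipschitz-type modulus for $\mtx{R}(f)$, consistent with the sharp behavior $f(x)\simeq(1-x)\ln(1+1/(1-x))$ established later in Theorem \ref{thm:C1_regularity}), at the cost of the $W^{-1}$ transfer step; the paper's choice of the cruder $1/x_2$ bound is tailored so that the quantity $\tilde f^2$ is exactly Lipschitz, which is the shortest path to the stated $C^{1/2}$ estimate and keeps the constants elementary. Both proofs use the hypotheses in the same way ($f\ge d(1-|x|)$ only through Lemma \ref{lem:Fx2_estimate}, and $f(0)\le M$ through Lemma \ref{lem:Constant_upper_bound} to keep $\tilde f\le M$), so your argument is a valid, slightly longer alternative.
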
  

\begin{proof}
Let $f\in \mathbb{M}_0$ be such that $d(1-|x|)\leq f(x)\leq M$ on $[-1,1]$, and denote $\tilde f=\mtx{R}(f)$. By Lemma \ref{lem:Fx1_estimate} and Lemma \ref{lem:Fx2_estimate}, for all $x_1\in[0,1], x_2\in[0,M]$, we have
\[|F_{x_1}(x_1,x_2;f)| \leq \frac{x_1}{x_2} \leq \frac{1}{x_2},\]
and
\[|F_{x_2}(x_1,x_2;f)|\geq \frac{C_d}{\max\{1,x_2^3\}}\geq \frac{C_d}{M^3}.\]
By Lemma \ref{lem:Constant_upper_bound}, $f(0)\leq M$ implies $\tilde f(x)\leq M$ on $[-1,1]$. Also recall that $\tilde f'(x)\leq 0$ on $[0,1]$. Hence, for all $x\in[0,1]$, 
\[-\tilde {f}'(x) =|\tilde {f}'(x)| = \frac{|F_{x_1}(x,\tilde f(x);f)|}{|F_{x_2}(x,\tilde f(x);f)|}\leq \frac{C_{M,d}}{\tilde f(x)},\]
for some uniform constant $C_{M,d}>0$ that only depends on $M$ and $d$. That is, 
\[-\left(\tilde f(x)^2\right)' = -2\tilde f(x) \tilde f(x)'\leq C_{M,d}. \]
Then, for any $x,y\in[0,1]$, $x<y$, it holds that
\[\tilde f(x)^2 - \tilde f(y)^2 = \int_x^y \left(-\tilde f(s)^2\right)'\idiff s \leq C_{M,d} (y-x),\]
which implies 
\[\tilde f(x)-\tilde f(y) \leq \left(C_{M,d} (y-x) + \tilde f(y)^2\right)^{1/2} - \tilde f(y)\leq C_{M,d} (y-x)^{1/2}.\]
The claimed $C^{1/2}$-estimate of $\mtx{R}(f)$ thus follows.
\end{proof}

\subsection{Continuity of $\mathbf{R}$}
To employ the Schauder fixed-point theorem, we also need to show that the map $\mtx{R}$ is continuous in the $L^\infty$-topology. To achieve this, we first demonstrate how $F(x_1,x_2;f)$ continuously depends on $f\in \mathbb{M}_0$.

\begin{lemma}\label{lem:F_continuity}
For any $f_1,f_2\in\mathbb{M}_0$, and for any $x_1\in[-1,1], x_2\geq0$, 
\[|F(x_1,x_2;f_2) -F(x_1,x_2;f_1)|\leq  C \|f_1-f_2\|_{L^\infty}\ln\left(2+\frac{1}{\|f_1-f_2\|_{L^\infty}}\right),\]
for some absolute constant $C$.
\end{lemma}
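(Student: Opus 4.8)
The plan is to use the linearity of $\phi(\cdot;f)$ and $c(f)$ in the indicator $\mathbf{1}_{\Omega_f}$ so as to reduce the whole estimate to bounding an integral of a mild (at worst $|\vx-\vy|^{-1}$) kernel over the symmetric difference $A:=\Omega_{f_1}\triangle\Omega_{f_2}$, which is a thin set because $f_1$ and $f_2$ differ pointwise by at most $\delta:=\|f_1-f_2\|_{L^\infty}$. Concretely, for $x_2>0$ I would write
\[
F(\vx;f_2)-F(\vx;f_1)=\frac{1}{x_2}\big(\phi(\vx;f_2)-\phi(\vx;f_1)\big)-\big(c(f_2)-c(f_1)\big)
\]
and use the representations $\phi(\vx;f)=\frac{1}{2\pi}\int_{\Omega_f}\ln\frac{|\bar{\vx}-\vy|}{|\vx-\vy|}\idiff\vy$ and $c(f)=\frac{1}{\pi}\int_{\Omega_f}\frac{y_2}{(1-y_1)^2+y_2^2}\idiff\vy$. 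Both kernels are nonnegative on the upper half-plane, so the triangle inequality gives
\[
|F(\vx;f_2)-F(\vx;f_1)|\le \frac{1}{2\pi}\int_A \frac{1}{x_2}\ln\frac{|\bar{\vx}-\vy|}{|\vx-\vy|}\idiff\vy+\frac{1}{\pi}\int_A\frac{y_2}{(1-y_1)^2+y_2^2}\idiff\vy .
\]
The geometric point is that for each $y_1\in[-1,1]$ the vertical slice of $A$ is an interval in $y_2$ of length exactly $|f_1(y_1)-f_2(y_1)|\le\delta$, since each $\Omega_{f_i}$ is the region under the graph of $f_i$ starting from the $x_1$-axis. The case $x_2=0$ I would dispose of by continuity, letting $x_2\to0^+$ and using $F(x_1,0;f)=\phi_{x_2}(x_1,0;f)-c(f)$; alternatively it is handled directly, since then both kernels are of the second type, centered at $(x_1,0)$ and $(1,0)$ respectively, whose first coordinates lie in $[-1,1]$.

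The core of the argument is a single one-variable estimate: for any $\vq=(q_1,q_2)$ with $q_1\in[-1,1]$, $q_2\ge0$, and any set $A$ whose slices are intervals of length $\le\delta$,
\[
\int_A\frac{\idiff\vy}{|\vq-\vy|}\le C\,\delta\ln\!\Big(2+\tfrac{1}{\delta}\Big).
\]
To prove this I would apply Fubini: the slice integral $\int_{I(y_1)}\frac{\idiff y_2}{\sqrt{(q_1-y_1)^2+(q_2-y_2)^2}}$ has antiderivative $\operatorname{arcsinh}(\cdot/|q_1-y_1|)$, and since $s\mapsto(1+s^2)^{-1/2}$ is unimodal about $s=0$, an interval of length $\le\delta$ contributes at most $2\operatorname{arcsinh}\big(\delta/(2|q_1-y_1|)\big)\le 2\ln\big(1+\delta/|q_1-y_1|\big)$. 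Integrating this over $y_1\in[-1,1]$ and using $q_1\in[-1,1]$ bounds the total by $4\int_0^2\ln(1+\delta/t)\idiff t=4\big(2\ln(1+\delta/2)+\delta\ln(1+2/\delta)\big)$, which is $\le C\delta\ln(2+1/\delta)$ after elementary simplification ($\ln(1+x)\le x$, and $\ln(1+2/\delta)\le 2\ln(2+1/\delta)$). It then remains only to dominate the two kernels in the displayed bound by $|\vq-\vy|^{-1}$ for a suitable $\vq$ with first coordinate in $[-1,1]$: for the $c$-kernel this is immediate, $\frac{y_2}{(1-y_1)^2+y_2^2}\le\frac{1}{|(1,0)-\vy|}$; and for the other, using $|\bar{\vx}-\vy|^2=|\vx-\vy|^2+4x_2y_2$, the bound $y_2\le x_2+|\vx-\vy|$, and $2\ln(1+2\rho)\le4\rho$ with $\rho=x_2/|\vx-\vy|$, one obtains $\frac{1}{x_2}\ln\frac{|\bar{\vx}-\vy|}{|\vx-\vy|}=\frac{1}{2x_2}\ln\!\big(1+\tfrac{4x_2y_2}{|\vx-\vy|^2}\big)\le\frac{2}{|\vx-\vy|}$, uniformly in $x_2>0$, with $x_1\in[-1,1]$. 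Substituting into the auxiliary estimate and summing the two contributions gives the claim with an absolute constant $C$.

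The only real obstacle is obtaining a pointwise bound on the kernel $\frac{1}{x_2}\ln\frac{|\bar{\vx}-\vy|}{|\vx-\vy|}$ that is uniform as $x_2\to0^+$, so as to avoid both a case split and a spurious $\ln(1/x_2)$ loss; the bound $\le 2/|\vx-\vy|$ above does exactly that. The rest is the slicing observation together with the two elementary integrals $\int(p^2+u^2)^{-1/2}\idiff u$ and $\int_0^2\ln(1+\delta/t)\idiff t$.
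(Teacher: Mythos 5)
Your proposal is correct; every step checks out (the identity $|\bar{\vx}-\vy|^2=|\vx-\vy|^2+4x_2y_2$, the uniform pointwise bound $\tfrac{1}{x_2}\ln\tfrac{|\bar{\vx}-\vy|}{|\vx-\vy|}\leq 2|\vx-\vy|^{-1}$ obtained from $y_2\leq x_2+|\vx-\vy|$ and $2\ln(1+2\rho)\leq 4\rho$, the rearrangement/unimodality bound $2\operatorname{arcsinh}(\delta/2p)\leq 2\ln(1+\delta/p)$ for the slice integrals, and the explicit evaluation of $\int_0^2\ln(1+\delta/t)\,\diff t$), and the treatment of $x_2=0$ via $F(x,0;f)=\phi_{x_2}(x,0;f)-c(f)$ is consistent with how the paper uses $F$ on the axis. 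Your route shares the paper's basic reduction — both estimates are ultimately integrals over the thin region between the graphs of $f_1$ and $f_2$ (the paper encodes this with $g_1=\min\{f_1,f_2\}$, $g_2=\max\{f_1,f_2\}$ and slicewise integrals from $g_1$ to $g_2$, you with the symmetric difference $\Omega_{f_1}\triangle\Omega_{f_2}$) — but the execution differs: the paper keeps the logarithmic kernel, splits it into two pieces $I_1+I_2$, and handles $I_1$ and the $c$-difference via the one-dimensional fact $\int_a^b\frac{2|s|}{1+s^2}\,\diff s\leq C\ln(1+(b-a))$ while bounding $I_2$ by $C\delta$ directly, whereas you dominate both kernels at once by $C|\vct{q}-\vy|^{-1}$ with $\vct{q}\in\{(x_1,x_2),(1,0)\}$ and prove a single auxiliary slicing estimate $\int_A|\vct{q}-\vy|^{-1}\idiff\vy\leq C\delta\ln(2+1/\delta)$. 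What your version buys is uniformity in $x_2$ without any splitting of the kernel and a reusable lemma that treats the $\phi/x_2$-term and the $c$-term identically; what the paper's version buys is that it stays entirely within the explicit one-dimensional integrals already tabulated in its appendix, at the cost of the extra decomposition. Both yield the same log-Lipschitz modulus with an absolute constant.
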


\begin{proof}
Given any $f_1,f_2\in \mathbb{M}_0$, define $g_1(x) = \min\{f_1(x),f_2(x)\}$, $g_2(x) = \max\{f_1(x),f_2(x)\}$. We also denote $\|f_2-f_1\|_{L^\infty}=\delta$. 

Let us introduce a useful numeric fact that, for any $a,b\in \R$, $a<b$,  
\begin{equation}\label{eqt:F_continuity_step}
\int_a^b\frac{2|s|}{1+s^2}\idiff s\leq C\ln\big(1+(b-a)\big),
\end{equation}
for some absolute constant $C>0$. One can simply verify this in all three cases: $a<b\leq0$, $0\leq a<b$, or $a<0<b$. Now, we can compute that
\begin{align*}
|c(f_2) - c(f_1)| &=  \left|\frac{1}{2\pi}\int_{-1}^1 \ln \frac{(1-y)^2 + f_2(y)^2}{(1-y)^2+f_1(y)^2}\idiff y\right|\\
&\leq \frac{1}{2\pi}\int_{-1}^1 \ln \frac{(1-y)^2 + g_2(y)^2}{(1-y)^2+g_1(y)^2}\idiff y\\
&=  \frac{1}{2\pi}\int_{-1}^1 \idiff y \int_{g_1(y)/(1-y)}^{g_2(y)/(1-y)} \frac{2s}{1+s^2}\idiff s\\
&\leq C \int_{-1}^1 \ln\left(1+\frac{g_2(y)-g_1(y)}{1-y}\right)\idiff y\\
&\leq C \int_{-1}^1 \ln\left(1+\frac{\delta}{1-y}\right)\idiff y\\
&\leq C\left(\delta \ln\left(1+\frac{1}{\delta}\right) + \ln(1+\delta)\right)\leq C\delta\ln\left(2+\frac{1}{\delta}\right).
\end{align*}
We have used \eqref{eqt:F_continuity_step} for the second inequality above. Next, let $\phi_i := (\cdot\,,\cdot\,;f_i)$, $i=1,2$. We then compute that
\begin{align*}
&\left|\frac{\phi_2(x_1,x_2)}{x_2}-\frac{\phi_1(x_1,x_2)}{x_2}\right|\\
&\leq \frac{1}{2\pi }\int_{-1}^1\idiff y_1\int_{g_1(y_1)}^{g_2(y_1)}\frac{1}{2x_2}\ln\frac{(x_1-y_1)^2+(x_2+y_2)^2}{(x_1-y_1)^2+(x_2-y_2)^2}\idiff y_2\\
&= \frac{1}{2\pi }\int_{-1}^1\idiff y_1\int_{g_1(y_1)}^{g_2(y_1)}\frac{1}{2x_2}\ln\left(1+\frac{4x_2(y_2-x_2)}{(x_1-y_1)^2+(x_2-y_2)^2} + \frac{4x_2^2}{(x_1-y_1)^2+(x_2-y_2)^2}\right)\idiff y_2\\
&\leq \frac{1}{2\pi }\int_{-1}^1\idiff y_1\int_{g_1(y_1)}^{g_2(y_1)}\frac{1}{2x_2}\ln\left(1+\frac{4x_2|y_2-x_2|}{(x_1-y_1)^2+(x_2-y_2)^2}\right)\idiff y_2\\
&\quad + \frac{1}{2\pi }\int_{-1}^1\idiff y_1\int_{g_1(y_1)}^{g_2(y_1)}\frac{1}{2x_2}\ln\left(1+\frac{4x_2^2}{(x_1-y_1)^2+(x_2-y_2)^2}\right)\idiff y_2\\
&=: I_1 + I_2.
\end{align*}
We bound $I_1$ as
\begin{align*}
I_1 &\leq \frac{1}{2\pi }\int_{-1}^1\idiff y_1\int_{g_1(y_1)}^{g_2(y_1)}\frac{2|y_2-x_2|}{(x_1-y_1)^2+(x_2-y_2)^2}\idiff y_2\\
&= \frac{1}{2\pi }\int_{-1}^1\idiff y_1\int_{(g_1(y_1)-x_2)/|x_1-y_1|}^{(g_2(y_1)-x_2)/|x_1-y_1|}\frac{2|s|}{1+s^2}\idiff s\\
&\leq C \int_{-1}^1 \ln\left(1+\frac{g_2(y)-g_1(y)}{|x_1-y_1|}\right)\idiff y_1\\
&\leq C \int_{-1}^1 \ln\left(1+\frac{\delta}{|x_1-y_1|}\right)\idiff y_1 \leq C\delta \ln\left(2+\frac{1}{\delta}\right).
\end{align*}
We have again used \eqref{eqt:F_continuity_step} for the second inequality above. As for $I_2$, we find that
\begin{align*}
I_2 &\leq \frac{1}{2\pi }\int_{-1}^1\idiff y_1\int_{g_1(y_1)}^{g_2(y_1)}\frac{1}{2x_2}\ln\left(1+\frac{4x_2^2}{(x_1-y_1)^2}\right)\idiff y_2\\
&\leq \frac{\delta}{2\pi }\int_{-1}^1\frac{1}{2x_2}\ln\left(1+\frac{4x_2^2}{(x_1-y_1)^2}\right)\idiff y_1\\
&\leq \frac{\delta}{2\pi }\int_{-\infty}^\infty\ln\left(1+\frac{1}{s^2}\right)\idiff s \leq C\delta.
\end{align*}
Putting the preceding estimates together, we obtain
\begin{align*}
\left|F(x_1,x_2;f_2) - F(x_1,x_2;f_1)\right| &\leq \left|\frac{\phi_2(x_1,x_2)}{x_2}-\frac{\phi_1(x_1,x_2)}{x_2}\right| + |c(f_2) - c(f_1)| \\
&\leq  I_1 + I_2  + |c(f_2) - c(f_1)| \leq C\delta\ln\left(2+\frac{1}{\delta}\right),
\end{align*}
as desired.
\end{proof}

It is then not difficult to justify the continuity of $\mtx{R}$ by the continuity of $\mtx{F}(\vx;f)$ in $f$. 

\begin{proposition}\label{prop:R_continuity}
Given any $f_1,f_2\in\mathbb{M}_0$, if $d(1-|x|)\leq f_1(x),f_2(x)\leq M$ on $[-1,1]$ for some constant $d>0$, then
\[\|\mtx{R}(f_2)-\mtx{R}(f_1)\|_{L^\infty} \leq C_{M,d} \|f_2-f_1\|_{L^\infty}\ln\left(2+\frac{1}{\|f_2-f_1\|_{L^\infty}}\right),\]
for some constant $C_{M,d}>0$ that only depends on $M$ and $d$.
\end{proposition}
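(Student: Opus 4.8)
The plan is to run a standard implicit-function-type stability estimate, combining the uniform lower bound on $F_{x_2}$ from Lemma~\ref{lem:Fx2_estimate} with the $f$-continuity of $F$ from Lemma~\ref{lem:F_continuity}. Write $\tilde f_i := \mtx{R}(f_i)$, so that by Proposition~\ref{prop:implicit_existence} we have $F(x,\tilde f_i(x);f_i)=0$ for all $x\in[-1,1]$, and set $\delta := \|f_2-f_1\|_{L^\infty}$. Fix $x\in[-1,1]$; the algebraic heart of the argument is the identity
\[
0 = F(x,\tilde f_2(x);f_2) - F(x,\tilde f_1(x);f_1),
\]
whose right-hand side we split as $\big[F(x,\tilde f_2(x);f_2) - F(x,\tilde f_1(x);f_2)\big] + \big[F(x,\tilde f_1(x);f_2) - F(x,\tilde f_1(x);f_1)\big]$, isolating the dependence on the second spatial argument from the dependence on the profile.

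For the second bracket, Lemma~\ref{lem:F_continuity} applies with $x_1=x$ and $x_2=\tilde f_1(x)\ge 0$ and gives directly $\big|F(x,\tilde f_1(x);f_2) - F(x,\tilde f_1(x);f_1)\big|\le C\,\delta\,\ln\!\big(2+1/\delta\big)$. For the first bracket, I apply the mean value theorem to the function $t\mapsto F(x,t;f_2)$, which is $C^1$ on the closed interval with endpoints $\tilde f_1(x),\tilde f_2(x)$ by Lemma~\ref{lem:F_partial} (the cases $x=\pm1$ and $\tilde f_1(x)=\tilde f_2(x)$ are trivial since then the difference vanishes, so we may assume $x\in(-1,1)$ and the interval lies in $(0,M]$). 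This yields a point $\xi$ strictly between $\tilde f_1(x)$ and $\tilde f_2(x)$ with $F(x,\tilde f_2(x);f_2) - F(x,\tilde f_1(x);f_2) = F_{x_2}(x,\xi;f_2)\big(\tilde f_2(x)-\tilde f_1(x)\big)$. Since $f_2\in\mathbb{M}_0$ with $f_2(x)\ge d(1-|x|)$ by hypothesis, Lemma~\ref{lem:Fx2_estimate} gives $|F_{x_2}(x,\xi;f_2)|\ge C_d/\max\{1,\xi^3\}$; and because $f_i(0)\le M$, Lemma~\ref{lem:Constant_upper_bound} forces $\tilde f_i(x)<M$ for all $x$, hence $0<\xi<M$ and $\max\{1,\xi^3\}\le M^3$ (using $M>1$), so $|F_{x_2}(x,\xi;f_2)|\ge C_d/M^3$, a bound depending only on $M$ and $d$. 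Combining the three displays and solving for $\tilde f_2(x)-\tilde f_1(x)$ gives $|\tilde f_2(x)-\tilde f_1(x)|\le (M^3 C/C_d)\,\delta\,\ln(2+1/\delta)$; taking the supremum over $x\in[-1,1]$ finishes the proof.

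I do not expect a genuine obstacle here — the estimate is a routine consequence of the quantitative lemmas already established. The only points needing care are ensuring the intermediate value $\xi$ is simultaneously bounded away from $0$ and bounded above by an absolute constant, so that the lower bound on $|F_{x_2}|$ is uniform in $x$ and in the pair $f_1,f_2$: this is precisely where the hypotheses $d(1-|x|)\le f_i(x)\le M$ and the a priori barrier $\mtx{R}(f_i)(0)<M$ of Lemma~\ref{lem:Constant_upper_bound} enter. The degenerate situations ($x=\pm1$, or $\tilde f_1(x)=\tilde f_2(x)$) should be dispatched first so that the mean value theorem is applied only on a compact subinterval of $(0,\infty)$.
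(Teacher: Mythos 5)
Your proposal is correct and follows essentially the same route as the paper: the pivot quantity $F(x,\tilde f_1(x);f_2)$ is bounded above via Lemma~\ref{lem:F_continuity} and below via the uniform lower bound $|F_{x_2}|\geq C_d/M^3$ on $[-1,1]\times(0,M]$ coming from Lemmas~\ref{lem:Fx2_estimate} and~\ref{lem:Constant_upper_bound}, exactly as in the paper's argument (your explicit mean value theorem step is just the paper's inequality $|F(x,\tilde f_1(x);f_2)-F(x,\tilde f_2(x);f_2)|\geq \tilde C_{M,d}|\tilde f_2(x)-\tilde f_1(x)|$ spelled out). No gaps; the handling of the degenerate cases and the confinement of $\xi$ to $(0,M)$ is exactly the care the paper implicitly takes.
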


\begin{proof}
Denote $\tilde f_i = \mtx{R}(f_i)$, $i=1,2$. Recall that $\tilde f_i$ is defined via 
\[F(x,\tilde f_i(x);f_i) = 0,\quad x\in[-1,1].\]
By Lemma \ref{lem:Constant_upper_bound}, $f_i(x)\leq M$ on $[-1,1]$ implies $\tilde f_i(x)\leq M$ on $[-1,1]$.

Now fix an $x\in[-1,1]$. On the one hand, by Lemma \ref{lem:F_continuity}, we have
\[|F(x,\tilde f_1(x);f_2)| = |F(x,\tilde f_1(x);f_2) - F(x,\tilde f_1(x);f_1)|\leq \bar C \|f_1-f_2\|_{L^\infty} \ln(2+\|f_1-f_2\|_{L^\infty}^{-1})\]
for some uniform constant $\bar C>0$. On the other hand, by Lemma \ref{lem:Fx2_estimate}, there is some constant $\tilde C_{M,d}>0$ depending only on $M,d$ such that $|F_{x_2}(x_1,x_2;f_2)|\geq \tilde C_{M,d}$ for all $x_1\in[-1,1], x_2\in[0,M]$, which leads to
\[|F(x,\tilde f_1(x);f_2)|=|F(x,\tilde f_1(x);f_2) - F(x,\tilde f_2(x);f_2)| \geq \tilde C_{M,d} |\tilde f_2(x)-\tilde f_1(x)|.\]
Therefore, 
\[|\tilde f_2(x)-\tilde f_1(x)|\leq \frac{\bar{C}}{\tilde C_{M,d}} \|f_1-f_2\|_{L^\infty} \ln\left(2+\frac{1}{\|f_1-f_2\|_{L^\infty}}\right).\]
This proves the claim.
\end{proof}

\subsection{The convex, closed, and compact function set} Now, we are ready to define the smaller function set $\mathbb{D}\subset \mathbb{M}_0$, in which we will prove the existence of a fixed point $f=\mtx{R}(f)$ by studying a continuous-time dynamical system induced by $\mtx{R}$. Guided by the preceding results, we define
\begin{equation}\label{eqt:D_definition}
\mathbb{D} := \{f\in \mathbb{M}_0: u_\lambda(x)\leq f(x)\leq \min\{v_{\Lambda}(x),M\}\ \text{for}\ x\in[-1,1],\ |f|_{C^{1/2}}\leq \gamma\}.
\end{equation}
Here, the constant $M$ is determined in Lemma \ref{lem:Constant_upper_bound}; $v_{\Lambda}$ and $u_\lambda$ are defined in \eqref{eqt:v_definition} and \eqref{eqt:u_definition} with $\Lambda$ and $\lambda$ determined in Lemma \ref{lem:upper_barrier} and \ref{lem:lower_barrier}, respectively; the constant $\gamma=\gamma_{M,\lambda}$ is determined by $M$ and $\lambda$ the same as how $C_{M,d}$ is determined by $M$ and $d$ in Proposition \ref{prop:C_alpha}. Apparently, $\mathbb{D}$ is a convex, closed, and compact subset of $\mathbb{V}$ in the $L^\infty$-topology.

By the definition \eqref{eqt:D_definition} of $\mathbb{D}$, we immediately have the following corollaries.

\begin{corollary}\label{cor:C_alpha_D}
For any $f\in \mathbb{D}$, $|\mtx{R}(f)|_{C^{1/2}}\leq \gamma$. 
\end{corollary}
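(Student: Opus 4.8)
The plan is to deduce this directly from Proposition \ref{prop:C_alpha}, since the set $\mathbb{D}$ was defined in \eqref{eqt:D_definition} precisely so that every $f\in\mathbb{D}$ meets the hypotheses of that proposition. First I would fix $f\in\mathbb{D}$ and read off the two-sided bound built into \eqref{eqt:D_definition}, namely $u_\lambda(x)\le f(x)\le\min\{v_\Lambda(x),M\}\le M$ for all $x\in[-1,1]$. The only elementary point to record is that the quadratic lower barrier dominates a linear one: since $1-x^2=(1-|x|)(1+|x|)\ge 1-|x|$ on $[-1,1]$, we have $f(x)\ge u_\lambda(x)=\lambda(1-x^2)\ge\lambda(1-|x|)$. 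Thus $f$ satisfies $d(1-|x|)\le f(x)\le M$ on $[-1,1]$ with the specific choice $d=\lambda$.

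Then I would simply invoke Proposition \ref{prop:C_alpha} with this $d=\lambda$ to conclude $|\mtx{R}(f)|_{C^{1/2}}\le C_{M,\lambda}$. By construction, the constant $\gamma=\gamma_{M,\lambda}$ appearing in the definition of $\mathbb{D}$ (written $D$ in the statement) is chosen to be exactly this $C_{M,\lambda}$, so the claimed bound $|\mtx{R}(f)|_{C^{1/2}}\le D$ follows at once; in particular, $\mtx{R}(f)$ itself obeys the $C^{1/2}$-constraint used to define $\mathbb{D}$.

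There is essentially no obstacle here: the corollary is a bookkeeping consequence of Proposition \ref{prop:C_alpha} together with the way $\mathbb{D}$ and the constant $\gamma$ were set up. The only (trivial) thing worth spelling out is the inequality $\lambda(1-x^2)\ge\lambda(1-|x|)$, which guarantees that the quadratic lower barrier $u_\lambda$ of $\mathbb{D}$ is in particular an admissible linear lower bound for the purposes of Proposition \ref{prop:C_alpha}, so that the uniform constant there depends only on $M$ and $\lambda$.
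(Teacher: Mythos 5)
Your proposal is correct and follows the paper's own argument: the corollary is deduced from Proposition \ref{prop:C_alpha} using the elementary observation that $f\ge u_\lambda(x)=\lambda(1-x^2)\ge\lambda(1-|x|)$ and $f\le M$ on $\mathbb{D}$, so one may take $d=\lambda$, and the resulting constant $C_{M,\lambda}$ is exactly the $\gamma$ built into the definition of $\mathbb{D}$. Nothing is missing.
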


\begin{proof}
The claim follows from Proposition \ref{prop:C_alpha}, since $\lambda(1-|x|)\leq u_\lambda(x) \leq f(x)\leq M$ for all $f\in \mathbb{D}$.
\end{proof}

\begin{corollary}\label{cor:R_continuity_D}
The map $\mtx{R}$ is uniformly log-Lipschitz continuous on $\mathbb{D}$. More precisely, there is some uniform constant $C>0$ such that, for any $f_1,f_2\in\mathbb{D}$,
\[\|\mtx{R}(f_2)-\mtx{R}(f_1)\|_{L^\infty} \leq C \|f_2-f_1\|_{L^\infty}\ln\left(2+\frac{1}{\|f_2-f_1\|_{L^\infty}}\right).\]
\end{corollary}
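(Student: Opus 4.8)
The plan is to obtain this as an immediate consequence of Proposition \ref{prop:R_continuity}; the only thing to verify is that every element of $\mathbb{D}$ satisfies the hypotheses of that proposition with one fixed choice of the constant $d$.

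First I would record the elementary inequality $1-|x| \le 1-x^2$ for $x\in[-1,1]$ (equivalently $|x|\le 1$), so that the parabolic lower barrier built into the definition \eqref{eqt:D_definition} of $\mathbb{D}$ yields, for every $f\in\mathbb{D}$,
\[ \lambda(1-|x|) \;\le\; u_\lambda(x) \;\le\; f(x) \;\le\; \min\{v_\Lambda(x),M\} \;\le\; M, \qquad x\in[-1,1]. \]
Thus any $f\in\mathbb{D}$ meets the two-sided bound $\lambda(1-|x|)\le f(x)\le M$ required in Proposition \ref{prop:R_continuity} with $d=\lambda$.

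Next, for arbitrary $f_1,f_2\in\mathbb{D}$ I would apply Proposition \ref{prop:R_continuity} with this value $d=\lambda$ to conclude
\[ \|\mtx{R}(f_2)-\mtx{R}(f_1)\|_{L^\infty} \;\le\; C_{M,\lambda}\,\|f_2-f_1\|_{L^\infty}\ln\!\left(2+\frac{1}{\|f_2-f_1\|_{L^\infty}}\right). \]
Because the constants $M$ and $\lambda$ were fixed once and for all in Lemmas \ref{lem:Constant_upper_bound} and \ref{lem:lower_barrier}, the quantity $C_{M,\lambda}$ is an absolute constant, which I would simply rename $C$; this gives the asserted uniform log-Lipschitz bound on $\mathbb{D}$. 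There is no real obstacle: the entire content of the corollary is the observation that passing to the set $\mathbb{D}$ pins down the previously $d$- and $M$-dependent constant of Proposition \ref{prop:R_continuity} to a single universal value, and the one-line inequality $1-|x|\le 1-x^2$ is what lets the parabolic barrier $u_\lambda$ play the role of the tent function $d(1-|x|)$ in that proposition's hypothesis.
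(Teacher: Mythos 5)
Your proposal is correct and coincides with the paper's own argument: the paper likewise deduces the corollary directly from Proposition \ref{prop:R_continuity}, noting that $\lambda(1-|x|)\leq u_\lambda(x)\leq f(x)\leq M$ for every $f\in\mathbb{D}$, so the proposition applies with $d=\lambda$ and the constant $C_{M,\lambda}$ becomes universal once $M$ and $\lambda$ are fixed.
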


\begin{proof}
The claim follows from Proposition \ref{prop:R_continuity}, since $\lambda(1-|x|)\leq u_\lambda(x) \leq f(x)\leq M$ for all $f\in \mathbb{D}$.
\end{proof}

\subsection{An induced dynamical equation}
Consider the following dynamical equation which allows the solution to evolve continuously in time:
\begin{equation}\label{eqt:fixed-point_dynamic}
\partial_t f = \mtx{R}(f) - f,\quad f(x,0) = f_0(x) \in \mathbb{D}.
\end{equation}
Apparently, $f$ is a fixed point of $\mtx{R}$ if and only if it is a steady state of \eqref{eqt:fixed-point_dynamic}. As mentioned at the beginning of the section, we are going to construct a steady state by taking the limit of a sequence of time-periodic solutions of \eqref{eqt:fixed-point_dynamic}. In what follows, for a space-time function $f(x,t)$, we will sometimes use $\mtx{R}(f)(x,t)$ to denote $\mtx{R}(f(\cdot\,, t))(x)$.

We first prove the existence of at least one solution to the initial value problem \eqref{eqt:fixed-point_dynamic} for any time span.

\begin{lemma}\label{lem:dynamic_existence}
For any $f_0\in \mathbb{D}$ and any $T>0$, there exists at least one solution $f(x,t)\in C^1([0,T],\mathbb{M}_0)$ to the initial value problem \eqref{eqt:fixed-point_dynamic} such that $f(x,t)\leq M$ for all $x\in[-1,1]$ and $t\in[0,T]$. 
\end{lemma}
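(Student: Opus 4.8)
\subsection*{Proof proposal}

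The plan is to produce a solution by forward Euler time-discretization and then extract a limit, using the $C^{1/2}$-estimate of Proposition~\ref{prop:C_alpha} for compactness and the continuity of $\mtx{R}$ from Proposition~\ref{prop:R_continuity} to pass to the limit in the equation. Fix $T>0$ and $f_0\in\mathbb{D}$, and for $n$ large let $h:=T/n$ and $t_k:=kh$. Define
\[
f^{(n)}(t_0)=f_0,\qquad f^{(n)}(t_{k+1})=(1-h)\,f^{(n)}(t_k)+h\,\mtx{R}\bigl(f^{(n)}(t_k)\bigr),
\]
and extend $f^{(n)}$ to all of $[0,T]$ by linear interpolation in $t$. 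Since $\mathbb{M}_0$ is convex and $\mtx{R}(\mathbb{M}_0)\subset\mathbb{M}_1\subset\mathbb{M}_0$ by Proposition~\ref{prop:implicit_existence}, unwinding the recursion shows that each $f^{(n)}(t_k)$ is a convex combination of $f_0$ and the iterates $\mtx{R}(f^{(n)}(t_j))$ with $j<k$ (the weights sum to one); hence $f^{(n)}(t)\in\mathbb{M}_0$ for every $t\in[0,T]$.

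I then record three a priori bounds, uniform in $n$. \emph{Upper bound:} by induction, $f^{(n)}(t_k)(0)\le M$ forces $\mtx{R}(f^{(n)}(t_k))(0)<M$ by Lemma~\ref{lem:Constant_upper_bound}, so $f^{(n)}(t_{k+1})(0)<(1-h)M+hM=M$; since $f^{(n)}(t_k)$ is non-increasing on $[0,1]$ this gives $f^{(n)}(t)\le M$ on $[-1,1]\times[0,T]$. \emph{Crude lower bound:} $\mtx{R}\ge0$ on $\mathbb{M}_0$, so the recursion gives $f^{(n)}(t_{k+1})\ge(1-h)f^{(n)}(t_k)$, whence $f^{(n)}(t_k)\ge(1-h)^k f_0\ge(1-h)^n\lambda(1-|x|)\ge d_\ast(1-|x|)$, where $d_\ast:=\lambda\,\econst^{-2T}>0$ (using $f_0\ge u_\lambda\ge\lambda(1-|x|)$ and $(1-T/n)^n\ge\econst^{-2T}$ for $n\ge2T$). \emph{H\"older bound:} with $d_\ast(1-|x|)\le f^{(n)}(t_k)\le M$, Proposition~\ref{prop:C_alpha} yields $|\mtx{R}(f^{(n)}(t_k))|_{C^{1/2}}\le C_{M,d_\ast}$ (note $C_{M,d}$ decreases in $d$), and taking convex combinations with $f_0$, for which $|f_0|_{C^{1/2}}\le\gamma$, shows $|f^{(n)}(t)|_{C^{1/2}}\le\gamma_T:=\max\{\gamma,C_{M,d_\ast}\}$ for all $t$. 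Finally $\|f^{(n)}(t_{k+1})-f^{(n)}(t_k)\|_{L^\infty}=h\|\mtx{R}(f^{(n)}(t_k))-f^{(n)}(t_k)\|_{L^\infty}\le Mh$, so the interpolant is $M$-Lipschitz in $t$, uniformly in $n$.

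Consequently $\{f^{(n)}\}$ is equi-Lipschitz in $t$ with values in the set $K:=\{g\in\mathbb{M}_0:\ d_\ast(1-|x|)\le g\le M,\ |g|_{C^{1/2}}\le\gamma_T\}$, which is closed and compact in $(\mathbb{V},\|\cdot\|_{L^\infty})$; by the Arzel\`a--Ascoli theorem a subsequence $f^{(n_j)}\to f$ in $C([0,T],\mathbb{V})$, with $f(\cdot,t)\in K\subset\mathbb{M}_0$ and $f\le M$. Writing $\bar f^{(n)}(s):=f^{(n)}(t_k)$ for $s\in[t_k,t_{k+1})$, the interpolant satisfies $f^{(n)}(t)=f_0+\int_0^t\bigl(\mtx{R}(\bar f^{(n)}(s))-\bar f^{(n)}(s)\bigr)\idiff s$, and $\|\bar f^{(n)}(s)-f^{(n)}(s)\|_{L^\infty}\le Mh\to0$ gives $\bar f^{(n_j)}\to f$ uniformly too; since $\mtx{R}$ is (log-Lipschitz, hence) uniformly continuous on $K$ by Proposition~\ref{prop:R_continuity}, $\mtx{R}(\bar f^{(n_j)}(\cdot))\to\mtx{R}(f(\cdot))$ uniformly on $[0,T]$. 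Passing to the limit yields $f(t)=f_0+\int_0^t\bigl(\mtx{R}(f(s))-f(s)\bigr)\idiff s$, so $f\in C^1([0,T],\mathbb{V})$ solves \eqref{eqt:fixed-point_dynamic} with $f(\cdot,t)\in\mathbb{M}_0$ and $f\le M$ throughout.

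The main obstacle is that the compactness and continuity inputs, Propositions~\ref{prop:C_alpha} and~\ref{prop:R_continuity}, both require a positive lower bound $d(1-|x|)\le f$, whereas the lower-barrier Lemma~\ref{lem:lower_barrier} only controls the iterates at points where they touch $u_\lambda$, not pointwise, and convexity alone does not keep a discretized scheme above $u_\lambda$ everywhere. This is exactly why the very weak bound $f^{(n)}(t_{k+1})\ge(1-h)f^{(n)}(t_k)$ is the right quantity to track: it degrades the lower profile by at most the factor $\econst^{-2T}$ over $[0,T]$, which is harmless on a fixed finite interval. A secondary point is that $\mtx{R}$ is only log-Lipschitz, so a Picard/Banach argument is unavailable; the discretization-plus-compactness route uses only continuity of $\mtx{R}$, so this is not a difficulty (and uniqueness, if wanted, would follow from the modulus $r\log(2+1/r)$ satisfying Osgood's criterion, though it is not needed here).
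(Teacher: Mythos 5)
Your proposal is correct and follows essentially the same route as the paper: a forward-Euler (Euler polygon) discretization kept in $\mathbb{M}_0$ by convexity, the uniform upper bound from Lemma \ref{lem:Constant_upper_bound}, the decaying-but-harmless lower bound $(1-T/n)^k f_0$ feeding Propositions \ref{prop:C_alpha} and \ref{prop:R_continuity}, equicontinuity in $x$ and $t$, Arzel\`a--Ascoli, and passage to the limit in the integral equation via the continuity of $\mtx{R}$. The only difference is cosmetic (your explicit piecewise-constant comparison function $\bar f^{(n)}$ makes the limit passage slightly more detailed than the paper's).
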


\begin{proof}
We shall construct a solution using the method of Euler polygons. For any $t\in[0,1)$, define the discrete Forward-Euler operator:
\[\mtx{E}_t(f_0) = f_0 + \int_0^t\big(\mtx{R}(f_0)-f_0\big)\idiff s = (1-t)f_0 + t\mtx{R}(f_0),\quad f_0\in \mathbb{M}_0.\]
By Proposition \ref{prop:implicit_existence}, $\mtx{E}_{t}(f_0)$ is non-increasing on $[0,1]$ since $f_0$ and $\mtx{R}(f_0)$ are both non-increasing on $[0,1]$. It also holds that 
\[\mtx{E}_t(f_0)(x)\geq (1-t) f_0(x) >0 ,\quad \text{for $x\in(-1,1)$, $t\in[0,1)$}.\]
Hence, we have $\mtx{E}_t(f_0)\in \mathbb{M}_0$. Moreover, by Lemma \ref{lem:Constant_upper_bound}, $f\in \mathbb{D}$ implies $\mtx{R}(f_0)(x)\leq M$ on $[-1,1]$, which further implies that $\mtx{E}_{t}(f_0)(x)\leq M$ on $[-1,1]$. 

Now, for any $T>0$, let $0=t_0<t_1<\cdots<t_n=T$ be a uniform division of the interval $[0,T]$ with $t_k-t_{k-1} = T/n$ for some sufficiently large integer $n$ such that $T/n<1 $. We then construct the Euler polygon $p_n(x,t)$ on $[-1,1]\times[0,T]$ inductively as $p_n(x,0) = f_0(x)$ and
\[p_n(x,t) = \mtx{E}_{t-t_{k-1}}(p_n(x,t_{k-1})),\quad \text{for $t\in[t_{k-1},t_k]$},\quad k=1,2,\dots,n.\]
It follows that $p_n(\cdot\,,t)\in \mathbb{M}_0$ and $p_n(\cdot\,,t)\leq M$ for all $t\in[0,T]$, and for a sufficiently large $n$,
\[p_n(x,t) \geq \left(1-\frac{T}{n}\right)^k f_0(x)\geq \frac{1}{2}\econst^{-t}f_0(x)>\frac{1}{2}\econst^{-T}f_0(x),\quad \text{for $x\in(-1,1)$ and $t\in[t_{k-1},t_k]$}.\]
Hence, by Lemma \ref{lem:Constant_upper_bound}, we have $\mtx{R}(p_n)(x,t)\leq M$ for all $t\in[0,T]$, and thus
\[|p_n(x,t) - p_n(x,\tilde t)| \leq M|t-\tilde t|,\quad \text{for all $x\in[-1,1]$ and $t,\tilde t\in[0,T]$}\]
This in particular means the functions $\{p_n(x,t)\}_n$ are equicontinuous in $t$ on the entire space-time domain $[-1,1]\times [0,T]$.

Next, we shall prove that the functions $\{p_n(x,t)\}_n$ are also equicontinuous in $x$ on the space-time domain $[-1,1]\times [0,T]$ for all sufficiently large $n$. Firstly, by the definition \eqref{eqt:D_definition} of $\mathbb{D}$, $|f_0|_{C^{1/2}}\leq \gamma$. Secondly, since 
\[M\geq p_n(x,t) \geq \frac{1}{2}\econst^{-T}f_0(x) \geq \frac{1}{2}\econst^{-T}u_\lambda(x)\geq \frac{\lambda}{2}\econst^{-T}(1-|x|),\quad \text{for all $x\in[-1,1]$, $t\in[0,T]$}\},\]
we have by Proposition \ref{prop:C_alpha} that 
\[|\mtx{R}(p_n)(\cdot\,,t)|_{C^{1/2}_x}\leq C_{M,\lambda,T}, \quad \text{for all $t\in[0,T]$},\]
for some constant $C_{M,\lambda,T}>0$ that only depends on $M$, $\lambda$, and $T$. Moreover, by the construction of the Euler polygons, we actually have, for $t\in[t_{k-1},t_k]$,
\begin{align*}
p_n(x,t) &= \big(1-(t-t_{k-1})\big)\left(1-\frac{T}{n}\right)^{k-1}f_0(x) \\
&\quad + \big(1-(t-t_{k-1})\big)\sum_{j=1}^{k-1}\left(1-\frac{T}{n}\right)^{k-j-1}\left(\frac{T}{n}\right)^j\mtx{R}(p_n)(x,t_{j-1}) + (t-t_{k-1})\mtx{R}(p_n)(x,t_{k-1}).
\end{align*}
Therefore, for any $k=1,\dots,n$ and any $t\in[t_{k-1},t_k]$,   
\begin{align*}
|p_n(\cdot\,,t)|_{C^{1/2}} &\leq \big(1-(t-t_{k-1})\big)\left(1-\frac{T}{n}\right)^{k-1} \gamma \\
&\quad + \big(1-(t-t_{k-1})\big)\sum_{j=1}^{k-1}\left(1-\frac{T}{n}\right)^{k-j-1}\left(\frac{T}{n}\right)^jC_{M,\lambda,T} + (t-t_{k-1})C_{M,\lambda,T}\\
&\leq \max\{\gamma\,,\,C_{M,\lambda,T}\}.
\end{align*}
This readily implies that $\{p_n(x,t)\}_n$ are equicontinuous in $x$ on $[-1,1]\times [0,T]$.

In summary, the functions $\{p_n(x,t)\}_{n=1}^\infty$ are equicontinuous in both $x\in[-1,1]$ and $t\in[0,T]$. Therefore, by the classical Arzela--Ascoli theorem, there is a subsequence $\{p_{n_j}(x,t)\}_{j=1}^\infty$ that converges uniformly in $L^\infty_{x,t}$ to some limit function $f(x,t)$ on $[-1,1]\times [0,T]$ such that $f(x,0)=f_0(x)$, and 
\[M\geq f(x,t)\geq \frac{1}{2}\econst^{-t}f_0(x)>0,\quad \text{for all $x\in(-1,1)$ and $t\in[0,T]$}.\]
It is then clear that $f(x,t)\in \mathbb{M}_0$ for each $t\in[0,T]$. Moreover, by the $L^\infty_x$-continuity of $\mtx{R}$ established in Proposition \ref{prop:R_continuity}, it is not hard to show that $f(x,t)$ is a solution to \eqref{eqt:fixed-point_dynamic}, and thus $f(x,t)\in C^1([0,T],\mathbb{M}_0)$. 
\end{proof}

Next, we shall argue that, if $f_0\in \mathbb{D}$, then the solution $f(x,t)$ will remain in $\mathbb{D}$ for all $t\geq 0$, and hence the solution is unique for all time by the uniform log-Lipschitz continuity of $\mtx{R}$ on $\mathbb{D}$.

\begin{lemma}\label{lem:dynamic_uniqueness}
For any $f_0\in \mathbb{D}$, there is a unique solution $f(x,t)\in C^1([0,+\infty),\mathbb{D})$ to the initial value problem \eqref{eqt:fixed-point_dynamic}.
\end{lemma}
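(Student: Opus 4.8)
The plan is to upgrade the local existence of Lemma~\ref{lem:dynamic_existence} to a global solution, to prove that $\mathbb{D}$ is forward invariant under the flow \eqref{eqt:fixed-point_dynamic}, and then to read off uniqueness from the uniform log-Lipschitz bound of Corollary~\ref{cor:R_continuity_D}.

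First I would note that Lemma~\ref{lem:dynamic_existence} produces, on each interval $[0,T]$, a solution taking values in $\mathbb{M}_0$ with $f(x,t)\le M$; concatenating the solutions obtained on $[0,1],[1,2],\dots$ yields $f\in C^1([0,+\infty),\mathbb{M}_0)$ with the same bound $f(x,t)\le M$. Since $f(\cdot,t)\in\mathbb{M}_0$ automatically (evenness and monotonicity are preserved), membership in $\mathbb{D}$ reduces to the two-sided barrier bound $u_\lambda\le f(\cdot,t)\le\min\{v_\Lambda,M\}$ and the seminorm bound $|f(\cdot,t)|_{C^{1/2}}\le\gamma$. It then remains to prove (a) these hold for all $t\ge0$, and (b) uniqueness.

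For (a) I would run a single first-exit-time argument, using barrier (maximum-principle) estimates for the ODE $f_t=\mtx{R}(f)-f$. At any time $t$ and any interior point $x_*\in(-1,1)$ where $f(\cdot,t)$ touches the lower barrier, $f(x_*,t)=u_\lambda(x_*)$, Lemma~\ref{lem:lower_barrier} gives $\mtx{R}(f)(x_*,t)>u_\lambda(x_*)$, so $f_t(x_*,t)=\mtx{R}(f)(x_*,t)-f(x_*,t)>0$, i.e. the velocity points strictly into the region; symmetrically, at any interior point where $f(\cdot,t)$ touches $\min\{v_\Lambda,M\}$, Lemma~\ref{lem:upper_barrier} together with Lemma~\ref{lem:Constant_upper_bound} gives $\mtx{R}(f)(x_*,t)<f(x_*,t)$, hence $f_t(x_*,t)<0$. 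For the $C^{1/2}$-seminorm I would integrate the ODE to the Duhamel identity
\[
f(\cdot,t)=\econst^{-t}f_0+\int_0^t\econst^{-(t-s)}\,\mtx{R}(f(\cdot,s))\idiff s,
\]
whose weights have total mass $1$; since $|\cdot|_{C^{1/2}}$ is a seminorm, $|f_0|_{C^{1/2}}\le\gamma$, and $|\mtx{R}(f(\cdot,s))|_{C^{1/2}}\le\gamma$ by Corollary~\ref{cor:C_alpha_D} (equivalently Proposition~\ref{prop:C_alpha} with $d=\lambda$, valid once $f(\cdot,s)\ge u_\lambda\ge\lambda(1-|x|)$), we obtain $|f(\cdot,t)|_{C^{1/2}}\le\gamma$. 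Putting these together: if $\bar t:=\inf\{t\ge0:\ f(\cdot,t)\notin\mathbb{D}\}$ were finite, then $f(\cdot,\bar t)\in\mathbb{D}$ by continuity of $t\mapsto f(\cdot,t)$ in $L^\infty$ and closedness of $\mathbb{D}$, and one of the barrier bounds would become active at $\bar t$; the corresponding inward-velocity estimate, together with the Duhamel bound for the seminorm, contradicts exit. Hence $f(\cdot,t)\in\mathbb{D}$ for all $t\ge0$.

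For (b): once every solution starting in $\mathbb{D}$ stays in $\mathbb{D}$, Corollary~\ref{cor:R_continuity_D} gives a uniform modulus of continuity for $\mtx{R}$ along such solutions. If $f,g\in C^1([0,\infty),\mathbb{D})$ both solve \eqref{eqt:fixed-point_dynamic} with the same initial datum, set $\rho(t)=\|f(\cdot,t)-g(\cdot,t)\|_{L^\infty}$. The Duhamel identity and Corollary~\ref{cor:R_continuity_D} give $\rho(0)=0$ and
\[
\rho(t)\le\int_0^t\Big(\|\mtx{R}(f(\cdot,s))-\mtx{R}(g(\cdot,s))\|_{L^\infty}+\rho(s)\Big)\idiff s\le C\int_0^t\rho(s)\ln\!\Big(2+\tfrac{1}{\rho(s)}\Big)\idiff s
\]
for some constant $C$; since $\int_{0^+}\frac{\diff r}{r\ln(2+1/r)}=+\infty$, Osgood's lemma forces $\rho\equiv0$, so $f\equiv g$.

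The step I expect to be the main obstacle is the forward invariance in (a), and inside it the behavior near the endpoints $x=\pm1$: there $u_\lambda$, $v_\Lambda$ and $f(\cdot,t)$ all vanish, so ``touching a barrier'' is only meaningful at interior points, and one must rule out that the solution escapes the admissible region through points that approach $\pm1$. The feature to exploit is that $f_t(x,t)=\mtx{R}(f)(x,t)-f(x,t)$ itself tends to $0$ as $x\to\pm1$ (both terms are $O((1-|x|)^{1/2})$ by the $C^{1/2}$ bound and vanish at the endpoints), so a genuine exit can only be triggered at an interior touching point, where Lemmas~\ref{lem:upper_barrier}--\ref{lem:lower_barrier} and Lemma~\ref{lem:Constant_upper_bound} apply; making this quantitative, simultaneously with the coupled Duhamel estimate for $|f(\cdot,t)|_{C^{1/2}}$, is the technical heart of the proof.
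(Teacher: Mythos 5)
Your proposal is correct and follows essentially the same route as the paper: forward invariance of $\mathbb{D}$ is obtained from the barrier statements (Lemmas \ref{lem:upper_barrier}, \ref{lem:lower_barrier}, with Lemma \ref{lem:Constant_upper_bound}) applied at interior touching points, the $C^{1/2}$-bound is propagated using $|\mtx{R}(f(\cdot,s))|_{C^{1/2}}\leq\gamma$ (your Duhamel identity is just the integrated form of the paper's differential inequality $\rho'\leq\gamma-\rho$), global existence follows by continuation via Lemma \ref{lem:dynamic_existence}, and uniqueness follows from the log-Lipschitz bound of Corollary \ref{cor:R_continuity_D} by an Osgood argument, exactly as in the paper. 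Two minor remarks: the concatenation on $[0,1],[1,2],\dots$ can only be performed after invariance has been shown on each interval (Lemma \ref{lem:dynamic_existence} requires data in $\mathbb{D}$), which is the order the paper uses; and the endpoint subtlety you flag is in fact passed over silently in the paper's own touching-point argument, while your proposed justification (that $f_t\to0$ as $x\to\pm1$ at rate $(1-|x|)^{1/2}$) is not by itself decisive, since the barriers themselves vanish at $\pm1$ faster than $(1-|x|)^{1/2}$, so smallness of $f_t$ alone does not preclude a crossing near the endpoints.
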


\begin{proof}
We have already proved the existence of a solution $f(x,t)\in C^1([0,T],\mathbb{M}_0)$ for any time span $T$ with $f(x,t)\leq M$ for $t\in[0,T]$. We need to further show that $f_0\in \mathbb{D}$ implies $f(\cdot\,,t)\in \mathbb{D}$ for all $t\geq 0$.

We first show that $f(x,t)\leq v_\Lambda (x)$ for all $t\geq 0$. Suppose that this claim is not true. Since $f_0(x)\leq v_\Lambda (x)$ and since the solution $f(x,t)$ is $C^1$ in $t$, there must be some $x_*\in (-1,1)$ and $t_*\geq 0$ such that $f(x,t_*)\leq v_\Lambda(x)$ for all $x\in[-1,1]$, $f(x_*,t_*) = v_\Lambda(x_*)$, and $f(x_*,t)>v_\Lambda(x_*)$ for $t\in [t_*,t_*+\epsilon]$  with $\epsilon$ sufficiently small. It the follows that $\partial_t f(x_*,t_*)\geq 0$. However, by Lemma \ref{lem:upper_barrier} we have 
\[\partial_t f(x_*,t_*) = \mtx{R}(f)(x_*,t_*) - f(x_*,t_*) <0.\]
This contradiction implies that $f(x,t)\leq v_\Lambda(x)$ must hold for all $t\geq 0$. Similarly, we can use Lemma \ref{lem:lower_barrier} to show that $f(x,t)\geq u_\lambda(x)$ for all $t\geq 0$. 

Next, we show that $|f(\cdot\,,t)|_{C^{1/2}_x}\leq \gamma$ for all $t\geq 0$ with $\gamma$ given in the definition \eqref{eqt:D_definition} of $\mathbb{D}$. Now that we know $u_\lambda(x)\leq f(x,t)\leq M$ for all $t\geq 0$, we have by Corollary \ref{cor:C_alpha_D} that $|\mtx{R}(f(\cdot\,,t))|_{C^{1/2}_x}\leq \gamma$ for all $t\geq 0$. For any fixed $x,y$ such that $0\leq x<y \leq1$, define
\[\rho(t) = \frac{f(x,t)-f(y,t)}{(y-x)^{1/2}}.\]
Since $f(x,t)\in \mathbb{M}_0$ is non-increasing on $[0,1]$, we have $\rho(t)\geq 0$. We then find that, for $t\geq 0$, 
\begin{align*}
\rho'(t) = \frac{\partial_t f(x,t)-\partial_t f(y,t)}{(y-x)^{1/2}} &= \frac{\mtx{R}(f)(x,t)-\mtx{R}(f)(y,t)}{(y-x)^{1/2}} - \frac{f(x,t)-f(y,t)}{(y-x)^{1/2}} \\
&\leq |\mtx{R}(f(\cdot,t))|_{C^{1/2}_x} - \rho(t) \leq \gamma - \rho(t),
\end{align*}
that is 
\[\rho'(t)\leq \gamma - \rho(t),\quad t\geq 0.\]
This immediately implies that $\rho(t)\leq \gamma$ for all $t\geq 0$ given that $\rho(0)\leq \gamma$, which is true since $\rho(0)\leq |f_0|_{C^{1/2}} \leq \gamma$ for $f_0\in \mathbb{D}$. Since the pair $x,y\in [0,1]$ is arbitrary, we can conclude that $|f(\cdot\,,t)|_{C^{1/2}_x}\leq \gamma$ for all $t\geq 0$. 

Combining the results above, we can conclude that $f(\cdot\,,t)\in\mathbb{D}$ for all $t\in[0,T]$. In particular, $f(\cdot\,,T)\in \mathbb{D}$. Then, by Lemma \ref{lem:dynamic_existence}, the solution $f(x,t)$ can be continued beyond time $T$. Iterating this argument yields the existence of a solution $f(x,t)\in C^1([0,+\infty),\mathbb{D})$.

Finally, the uniqueness of the solution is guaranteed by the uniform log-Lipschitz continuity of $\mtx{R}$ over $\mathbb{D}$ established in Corollary \ref{cor:R_continuity_D}. In fact, if there were two solutions, $f_1(x,t)$ and $f_2(x,t)$, to \eqref{eqt:fixed-point_dynamic} with the same initial data $f_0\in \mathbb{D}$, then their $L^\infty_x$ difference, denoted by $\delta(t) = \|f_1(\cdot,t)-f_2(\cdot,t)\|_{L^\infty_x}$, satisfies 
\begin{align*}
\delta(t) &\leq \delta(0) + \int_0^t\left\|\big(\mtx{R}(f_1)(\cdot\,,s)-f_1(\cdot\,,s)\big) - \big(\mtx{R}(f_2)(\cdot\,,s)-f_2(\cdot\,,s)\big)\right\|_{L^\infty_x}\idiff s\\
&\leq \delta(0) + \int_0^t\left(\|\mtx{R}(f_1)(\cdot\,,s)- \mtx{R}(f_2)(\cdot\,,s)\|_{L^\infty_x} + \|f_1(\cdot\,,s)- f_2(\cdot\,,s)\|_{L^\infty_x}\right)\idiff s\\
&\leq \delta(0) + C\int_0^t\delta(s)\left(1 + \ln\left(2+\frac{1}{\delta(s)}\right)\right)\idiff s\\
&\leq \delta(0) + C\int_0^t\delta(s)\left(1 + \left|\ln\delta(s)\right|\right)\idiff s.
\end{align*}
This readily implies that
\[\delta(t)\leq \delta(0)^{\econst^{-Ct}}\cdot \econst^{1-\econst^{-Ct}},\]
at least for $t\in[0,t_0]$, where $t_0$ is the first time such that the right-hand side above achieves 1. But since $\delta(0)=0$, we obtain $\delta(t)=0$ for all $t\geq 0$. This completes the proof.
\end{proof}

So far, we have proved the global existence and uniqueness of a solution to \eqref{eqt:fixed-point_dynamic} for any $f_0\in \mathbb{D}$. It is then legit to define the forward solution operator
\[\mtx{S}_t(f_0) := f(\cdot,t),\quad f_0\in \mathbb{D},\ t\geq 0,\]
where $f(x,t)$ is the unique solution to the initial value problem \eqref{eqt:fixed-point_dynamic} with the initial state $f_0$. The next proposition shows that the map $\mtx{S}_t$ admits a fixed point in $\mathbb{D}$ for each $t\geq 0$. 

\begin{proposition}\label{prop:S_fixed-point}
Given any $t\geq 0$, $\mtx{S}_t$ maps $\mathbb{D}$ continuously (in the $L^\infty$-topology) into itself. As a result, $\mtx{S}_t$ has a fixed point $f\in \mathbb{D}$, that is, $\mtx{S}_t(f)=f$.
\end{proposition}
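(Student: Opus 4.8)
The plan is to check the two hypotheses of the Schauder fixed-point theorem for $\mtx{S}_t\colon\mathbb{D}\to\mathbb{V}$: that $\mtx{S}_t(\mathbb{D})\subseteq\mathbb{D}$ and that $\mtx{S}_t$ is continuous in the $L^\infty$-topology. Since $\mathbb{D}$ is nonempty, convex, closed, and compact in $\mathbb{V}$ by its definition \eqref{eqt:D_definition}, the existence of a fixed point follows immediately once these two facts are in hand. The invariance $\mtx{S}_t(\mathbb{D})\subseteq\mathbb{D}$ is already contained in Lemma \ref{lem:dynamic_uniqueness}: for $f_0\in\mathbb{D}$, the unique solution $f(x,s)$ of \eqref{eqt:fixed-point_dynamic} satisfies $f(\cdot,s)\in\mathbb{D}$ for all $s\geq0$, so in particular $\mtx{S}_t(f_0)=f(\cdot,t)\in\mathbb{D}$.

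It remains to prove the continuity of $\mtx{S}_t$ on $\mathbb{D}$, which I would obtain by the same log-Gr\"onwall (Osgood) estimate used for uniqueness in the proof of Lemma \ref{lem:dynamic_uniqueness}. Let $f_0,g_0\in\mathbb{D}$, let $f(x,s)$, $g(x,s)$ be the corresponding solutions of \eqref{eqt:fixed-point_dynamic}, which both stay in $\mathbb{D}$, and set $\delta(s):=\|f(\cdot,s)-g(\cdot,s)\|_{L^\infty_x}$. Writing \eqref{eqt:fixed-point_dynamic} in integral form, using the triangle inequality, and invoking the uniform log-Lipschitz bound for $\mtx{R}$ on $\mathbb{D}$ from Corollary \ref{cor:R_continuity_D}, one arrives at
\[
\delta(t)\leq \delta(0) + C\int_0^t\delta(s)\Bigl(1+\bigl|\ln\delta(s)\bigr|\Bigr)\idiff s
\]
for some uniform constant $C>0$, exactly as in the uniqueness argument. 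The Osgood inequality then gives, provided $\delta(0)$ is small enough (a smallness depending only on $t$),
\[
\delta(t)\leq \delta(0)^{\econst^{-Ct}}\cdot\econst^{1-\econst^{-Ct}},
\]
so that $\|\mtx{S}_t(f_0)-\mtx{S}_t(g_0)\|_{L^\infty}\to0$ as $\|f_0-g_0\|_{L^\infty}\to0$. Hence $\mtx{S}_t$ is continuous (indeed uniformly continuous) on $\mathbb{D}$.

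With $\mtx{S}_t\colon\mathbb{D}\to\mathbb{D}$ continuous and $\mathbb{D}$ a nonempty convex compact subset of the Banach space $\mathbb{V}$, the Schauder fixed-point theorem yields $f\in\mathbb{D}$ with $\mtx{S}_t(f)=f$. I do not anticipate a real obstacle: the two nontrivial ingredients---invariance of $\mathbb{D}$ under the flow and the log-Lipschitz modulus of $\mtx{R}$---are already established, so the argument amounts to quoting the Osgood estimate and the Schauder theorem. The only mild point of care is that the continuity bound above is effective only once $\delta(0)$ is below the threshold where the right-hand side of the integral inequality remains bounded on $[0,t]$; but this is precisely what continuity at a point requires, and it suffices for our purposes.
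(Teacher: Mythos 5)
Your proposal is correct and follows essentially the same route as the paper: invariance of $\mathbb{D}$ comes from Lemma \ref{lem:dynamic_uniqueness}, continuity of $\mtx{S}_t$ from the same Osgood/log-Gr\"onwall estimate built on Corollary \ref{cor:R_continuity_D}, and then the Schauder fixed-point theorem is applied on the convex, closed, compact set $\mathbb{D}$. The caveat you note about the smallness threshold on $\delta(0)$ is exactly the qualification the paper makes as well.
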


\begin{proof}
Lemma \ref{lem:dynamic_uniqueness} guarantees that $\mtx{S}_t$ maps $\mathbb{D}$ into itself. We only need to prove the $L^\infty$-continuity of $\mtx{S}_t$ on $\mathbb{D}$. Given any $f_1, f_2\in \mathbb{D}$, denote $\delta(t) = \|\mtx{S}_t(f_1)-\mtx{S}_t(f_2)\|_{L^\infty}$. Using a similar argument as in last part of the proof of Lemma \ref{lem:dynamic_uniqueness}, we find that 
\[\delta(t)\leq \delta(0)^{\econst^{-Ct}}\cdot \econst^{1-\econst^{-Ct}},\]
as long as $\delta(0)^{\econst^{-Ct}}\cdot \econst^{1-\econst^{-Ct}}\leq 1$, i.e. $\delta(0)\leq \econst^{1-\econst^{Ct}}$. This immediately implies that $\mtx{S}_t$ is continuous on $\mathbb{D}$ in the $L^\infty$-norm.

Now, since $\mathbb{D}$ is convex, closed, and compact in the $L^\infty$-topology, it follows from the Schauder fixed-point theorem that $\mtx{S}_t$ admits a fixed point $f=\mtx{S}_t(f)$ in $\mathbb{D}$.
\end{proof}

It is not hard to see that, for any $T>0$, a fixed point of $\mtx{S}_T$ actually gives a time-periodic solution of \eqref{eqt:fixed-point_dynamic} with period $T$. Suppose that $f_T$ is a fixed point of $\mtx{S}_T$, i.e. $\mtx{S}_T(f_T) = f_T$, whose existence is guaranteed by Proposition \ref{prop:S_fixed-point}. By the definition of $\mtx{S}_T$, this implies that, if $f(x,t)$ is the solution of \eqref{eqt:fixed-point_dynamic} with initial data $f_0=f_T$, then for all $t\geq 0$, 
\[f(\cdot\,,t+T) = \mtx{S}_t(f(\cdot\,,T))=\mtx{S}_t(\mtx{S}_T(f_0)) = \mtx{S}_t(f_0) = f(\cdot\,,t),\]
which means $f(x,t)$ is $T$-periodic in time. In fact, this further means that $f(\cdot\,,t)$ is a fixed point of $\mtx{S}_T$ for every $t\geq 0$, as we have shown above that $\mtx{S}_T(f(\cdot\,,t)) = f(\cdot\,,t+T) = f(\cdot\,,t)$.

It is possible that this periodic solution $f(x,t)$ with initial data $f_0=f_T$ is in fact a stationary solution in the sense that $f(x,t) \equiv f_0$ for all $t\geq 0$, that is, $\mtx{R}(f_0) -f_0 = \mtx{R}(f(\cdot\,,t)) -f(\cdot\,,t) = \partial_t f(\cdot\,,t)=0$. In this case, $f_0$ is already a fixed point of $\mtx{R}$, and we are done. Though this may not be the case in general, it inspires us to find a fixed point of $\mtx{R}$ by taking the period $T$ to $0$. In fact, since the time derivative $\partial_t f = \mtx{R}(f) - f$ is uniformly bounded for all $t\geq0$ and all $f_0\in \mathbb{D}$, it is conceivable that a time-periodic solution $f(x,t)$ shall behave like a stationary solution as the period tends to $0$.

\subsection{Existence of a fixed point of $\mathbf{R}$}
Based on the idea explained above, we can finally construct a fixed point of $\mtx{R}$ using the fixed points of $S_{t_n}$ for a vanishing sequence of $t_n$. This proves the existence of the desired domain $\Omega$ (and thus the domain $D$) in our main Theorem \ref{thm:main_formal}.

\begin{theorem}\label{thm:R_fixed-point}
The map $\mtx{R}$ has a fixed point $f\in \mathbb{D}$.
\end{theorem}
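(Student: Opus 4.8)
The plan is to obtain a fixed point of $\mtx{R}$ as a limit of the time-periodic solutions produced by Proposition \ref{prop:S_fixed-point}, taking the period to zero. For each $n\geq 1$, let $t_n = 1/n$, and let $f^{(n)}_0 \in \mathbb{D}$ be a fixed point of $\mtx{S}_{t_n}$, whose existence is guaranteed by Proposition \ref{prop:S_fixed-point}. Denote by $f^{(n)}(x,t) = \mtx{S}_t(f^{(n)}_0)(x)$ the corresponding solution of \eqref{eqt:fixed-point_dynamic}, which by Lemma \ref{lem:dynamic_uniqueness} stays in $\mathbb{D}$ for all $t\geq 0$ and is $t_n$-periodic in time. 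In particular, each $f^{(n)}_0$ lies in the compact set $\mathbb{D}$, so after passing to a subsequence we may assume $f^{(n)}_0 \to f_\infty$ in $L^\infty$ for some $f_\infty\in\mathbb{D}$ (using that $\mathbb{D}$ is closed and compact in the $L^\infty$-topology). The goal is then to show $\mtx{R}(f_\infty) = f_\infty$.

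First I would establish the key quantitative ingredient: a uniform bound on the displacement over one period. Since $f^{(n)}_0\in\mathbb{D}$, Lemma \ref{lem:dynamic_existence} (and the bounds in its proof) give $u_\lambda(x)\leq f^{(n)}(x,t)\leq M$ for all $t$, so by Corollary \ref{cor:C_alpha_D} and the definition \eqref{eqt:F_definition} of $F$ the time derivative is uniformly bounded, $\|f^{(n)}_t(\cdot,t)\|_{L^\infty} = \|\mtx{R}(f^{(n)})(\cdot,t) - f^{(n)}(\cdot,t)\|_{L^\infty}\leq 2M$. Hence, by periodicity,
\[
\|\mtx{R}(f^{(n)}_0) - f^{(n)}_0\|_{L^\infty} = \frac{1}{t_n}\left\|\int_0^{t_n}\big(\mtx{R}(f^{(n)})(\cdot,s) - f^{(n)}(\cdot,s)\big)\idiff s\right\|_{L^\infty},
\]
but this identity alone does not immediately force the left side to zero, since the integrand need not be small. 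The better route is: because $f^{(n)}_0 = f^{(n)}(\cdot,t_n) = f^{(n)}_0 + \int_0^{t_n} f^{(n)}_t(\cdot,s)\idiff s$, we do not get cancellation of $f^{(n)}_0$; instead I would use that $f^{(n)}(\cdot,t)$ is a fixed point of $\mtx{S}_{t_n}$ for every $t$ (noted in the text after Proposition \ref{prop:S_fixed-point}), so $f^{(n)}(\cdot,t)$ stays within $2M t_n$ of $f^{(n)}_0$ uniformly in $t\in[0,t_n]$. Then for any $x$ where $f^{(n)}_0$ is, say, $C^1$ one has $f^{(n)}_t(x,0) = \mtx{R}(f^{(n)}_0)(x) - f^{(n)}_0(x)$, and integrating over a period together with periodicity gives $\int_0^{t_n} f^{(n)}_t(x,s)\idiff s = 0$; combining this with the equicontinuity in $t$ of $f^{(n)}_t$ (itself a consequence of the log-Lipschitz continuity of $\mtx{R}$ in Corollary \ref{cor:R_continuity_D} together with the uniform bound on $f^{(n)}_t$) yields $\|\mtx{R}(f^{(n)}_0) - f^{(n)}_0\|_{L^\infty} = \|f^{(n)}_t(\cdot,0)\|_{L^\infty} \to 0$ as $n\to\infty$.

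Finally I would pass to the limit. By Corollary \ref{cor:R_continuity_D}, $\mtx{R}$ is (log-Lipschitz, hence) continuous on $\mathbb{D}$ in the $L^\infty$-topology, so $f^{(n)}_0 \to f_\infty$ implies $\mtx{R}(f^{(n)}_0) \to \mtx{R}(f_\infty)$ in $L^\infty$. Combined with $\|\mtx{R}(f^{(n)}_0) - f^{(n)}_0\|_{L^\infty}\to 0$, this gives $\mtx{R}(f_\infty) = \lim_n \mtx{R}(f^{(n)}_0) = \lim_n f^{(n)}_0 = f_\infty$. Since $f_\infty\in\mathbb{D}$ and $\mathbb{D}$ is separated from $f\equiv 0$ by the lower barrier $u_\lambda$, this is the desired nontrivial fixed point, completing the proof of Theorem \ref{thm:R_fixed-point}.

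I expect the main obstacle to be the middle step — making rigorous the claim that the one-period displacement $\|\mtx{R}(f^{(n)}_0) - f^{(n)}_0\|_{L^\infty}$ tends to zero. The naive estimate $\|f^{(n)}_t\|_{L^\infty}\leq 2M$ only bounds this displacement by a constant, not by something vanishing; the vanishing must come from averaging $f^{(n)}_t$ over the period (which has zero mean by periodicity) against an equicontinuity-in-time modulus for $f^{(n)}_t$, and one must check that this modulus is uniform in $n$. The uniform modulus follows because $f^{(n)}_t(\cdot,t) = \mtx{R}(f^{(n)}(\cdot,t)) - f^{(n)}(\cdot,t)$, the map $t\mapsto f^{(n)}(\cdot,t)$ is $2M$-Lipschitz into $(L^\infty,\|\cdot\|)$, and $\mtx{R}$ is uniformly log-Lipschitz on $\mathbb{D}$ by Corollary \ref{cor:R_continuity_D}; composing gives a uniform modulus of continuity for $t\mapsto f^{(n)}_t(\cdot,t)$, and combined with $\int_0^{t_n} f^{(n)}_t\idiff s = 0$ this forces $\|f^{(n)}_t(\cdot,0)\|_{L^\infty}\leq \omega(t_n)\to 0$. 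The remaining steps (compactness of $\mathbb{D}$, continuity of $\mtx{R}$) are immediate from results already established.
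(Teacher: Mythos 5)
Your proposal is correct and follows essentially the same route as the paper: fixed points $f_n$ of $\mtx{S}_{t_n}$ from Proposition \ref{prop:S_fixed-point}, the zero-mean identity $\int_0^{t_n}\big(\mtx{R}(\hat f_n)(\cdot,s)-\hat f_n(\cdot,s)\big)\idiff s=0$ from periodicity combined with the $O(Mt_n)$ drift of the trajectory and the uniform log-Lipschitz continuity of $\mtx{R}$ on $\mathbb{D}$ (Corollary \ref{cor:R_continuity_D}) to force $\|\mtx{R}(f_n)-f_n\|_{L^\infty}\to 0$, and then compactness of $\mathbb{D}$ plus continuity of $\mtx{R}$ to pass to the limit. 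The "averaging $f_t$ against a uniform modulus" step you flagged as the main obstacle is exactly the estimate \eqref{eqt:existence_step1} in the paper, so no gap remains.
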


\begin{proof}
Let $t_n = 1/n$ for $n\geq 1$. By Proposition \ref{prop:S_fixed-point}, for each $n$, $S_{t_n}$ has a fixed point $f_n\in\mathbb{D}$, i.e. $\mtx{S}_{t_n}(f_n) = f_n$. We shall first show that 
\begin{equation}\label{eqt:existence_step1}
\|f_n-\mtx{R}(f_n)\|_{L^\infty}\leq C Mt_{n} \left(1 + \ln \left(1+\frac{1}{Mt_n}\right)\right),
\end{equation}
for some uniform constant $C$. Let $\hat f_n(x,t)$ denote the solution of \eqref{eqt:fixed-point_dynamic} with initial data $\hat f_n(x,0) = f_n(x)$. Recall that 
\[\hat f_n(\cdot,t) = f_n + \int_0^t \left(\mtx{R}(\hat f_n)(\cdot,s) - \hat f_n(\cdot,s)\right)\idiff s.\]
Then, for any $t\in [0,t_n]$, 
\[\|\hat f_n(\cdot,t) - f_n\|_{L^{\infty}_x} \leq  \int_0^t \|\mtx{R}(\hat f_n)(\cdot,s) - \hat f_n(\cdot,s)\|_{L^\infty_x}\idiff s \leq M t\leq Mt_n.\]
By Proposition \ref{prop:R_continuity}, we also have
\[\|\mtx{R}(\hat f_n)(\cdot,t) - \mtx{R}(f_n)\|_{L^{\infty}_x} \leq CMt \ln \left(1+\frac{1}{Mt}\right) \leq CMt_n \ln \left(1+\frac{1}{Mt_n}\right).\]
Note that the fixed-point relation $\mtx{S}_{t_n}(f_n) = f_n$ implies 
\[\int_0^{t_n}\left(\mtx{R}(\hat f_n)(\cdot,s)-\hat f_n(\cdot,s)\right)\idiff s = \mtx{S}_{t_n}(f_n) - f_n = 0.\]
Combining these calculations we find that 
\begin{align*}
\|f_n-\mtx{R}(f_n)\|_{L^{\infty}} &= \left\|f_n - \mtx{R}(f_n) + \frac{1}{t_n}\int_0^{t_n}\left(\mtx{R}(\hat f_n)(\cdot,s)-\hat f_n(\cdot,s)\right)\idiff s\right\|_{L^{\infty}_x}\\
&\leq \frac{1}{t_n}\int_0^{t_n}\|\mtx{R}(\hat f_n(\cdot,s))-\mtx{R}(f_n)\|_{L^{\infty}_x}\idiff s + \frac{1}{t_n}\int_0^{t_n}\|\hat f_n(\cdot,s)-f_n\|_{L^{\infty}_x}\idiff s\\
&\leq C Mt_{n} \left(1 + \ln \left(1+\frac{1}{Mt_n}\right)\right),
\end{align*}
which is \eqref{eqt:existence_step1}.

Next, since $\mathbb{D}$ is compact in the $L^\infty$-topology, the sequence $\{f_n\}_{n=1}^{\infty}$ has a subsequence, still denoted by $\{f_n\}_{n=1}^{\infty}$, that converges to a limit function $f\in \mathbb{D}$ in the $L^\infty$-norm. Hence, taking $n\rightarrow +\infty$ on both sides of \eqref{eqt:existence_step1} yields 
\[\|f-\mtx{R}(f)\|_{L^\infty}=0,\]
which means $f$ is a fixed point of $\mtx{R}$ in $\mathbb{D}$. 
\end{proof}

\section{Regularity of a fixed point}\label{sec:regularity}
We establish in this section some regularity estimates for a fixed point $f$ of $\mtx{R}$, thus proving the regularity results in Theorem \ref{thm:main_formal}. Note that, as we have not proved the uniqueness of the fixed point, the following regularity results apply to any fixed point in $\mathbb{D}$.

We first provide some quantitative estimates of the derivative $f'$ and show that the boundary of the whole domain $D$ described in Theorem \ref{thm:main_formal} is a $C^1$ curve.

\begin{theorem}\label{thm:C1_regularity}
Let $f\in \mathbb{D}$ be a fixed point of $\mtx{R}$, i.e. $\mtx{R}(f) = f$. Then, $f\in \mathbb{M}_1 \subset C^1(-1,1)$, and it satisfies 
\[C_1 x \leq  -f'(x)\leq C_2 x,\quad \text{for $x\in[0,1/2]$},\]
and
\[\tilde C_1\ln\left(1+\frac{1}{1-x}\right)\leq -f'(x)\leq \tilde C_2 \ln\left(1+\frac{1}{1-x}\right),\quad \text{for $x\in[1/2,1)$},\]
where the constants $C_1,C_2,\tilde C_1,\tilde C_2>0$ only depend on the (constant) parameters $M,\Lambda,\lambda$ in the definition \eqref{eqt:D_definition} of $\mathbb{D}$. In particular, $\lim_{x\rightarrow 1^-}f'(x) = -\infty$. This means $f^{-1}\in C^1[0,f(0))$, where $f^{-1}(y)$ for $y\in[0,f(0)]$ is the inverse function of $f(x)$ for $x\in[0,1]$. As a consequence, the boundary of the domain $D:=\{(x_1,x_2):x_1\in[-1,1],x_2\in[-f(x_1),f(x_1)]\}$ is a simple closed $C^1$-curve.
\end{theorem}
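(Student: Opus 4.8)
The plan is to read all the estimates off the fixed-point identity. Since $\mtx{R}(f)=f$, Proposition~\ref{prop:implicit_existence} immediately gives $f\in\mathbb{M}_1\subset C^1(-1,1)$ with $f'(0)=0$ and $f'(x)<0$ on $(0,1)$, and the implicit function theorem (applicable because $F_{x_2}\neq0$ by Lemma~\ref{lem:F_partial}) yields $f'(x)=-F_{x_1}(x,f(x);f)/F_{x_2}(x,f(x);f)$ for $x\in(0,1)$. Because $f\in\mathbb{D}$ we have $\lambda(1-|x|)\le u_\lambda(x)\le f(x)\le M$, so along the curve $x_2=f(x)$ Lemmas~\ref{lem:Fx1_estimate}, \ref{lem:Fx1_estimate_2}, \ref{lem:Fx2_estimate}, and \ref{lem:Fx2_estimate_2} all apply with $d=\lambda$. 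First I would dispose of the denominator: since $M>1$, Lemma~\ref{lem:Fx2_estimate} gives $-F_{x_2}(x,f(x);f)\ge C_\lambda/M^3$, while Lemma~\ref{lem:Fx2_estimate_2} gives the matching $O(1)$ upper bound, so $-F_{x_2}(x,f(x);f)\simeq1$ uniformly in $x\in(0,1)$ with constants depending only on $M,\lambda$. Hence $-f'(x)\simeq|F_{x_1}(x,f(x);f)|$ and everything reduces to two-sided bounds on $|F_{x_1}|$ along the curve.

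For $x\in[0,1/2]$ the value $f(x)$ is comparable to $1$ (between $\tfrac34\lambda$ and $M$) and $(1-x)^2+f(x)^2\le1+M^2$: the second bound of Lemma~\ref{lem:Fx1_estimate} gives $|F_{x_1}|\le Cx/f(x)\le C'x$, and Lemma~\ref{lem:Fx1_estimate_2} gives $|F_{x_1}|\ge C''x$ since every remaining factor there is bounded above and below. This proves $-f'(x)\simeq x$ on $[0,1/2]$. For $x\in[1/2,1)$ the upper bound comes from the first bound of Lemma~\ref{lem:Fx1_estimate}: using $f(0)\le M$ and $f(x)\ge u_\lambda(x)\ge\tfrac{3\lambda}{2}(1-x)$ one gets $|F_{x_1}|\le C(1+\ln(1+f(0)/f(x)))\le C(1+\ln(1+C_0/(1-x)))$, and since $\ln(1+1/(1-x))\ge\ln3$ on $[1/2,1)$ the additive constant and the change of constant inside the logarithm are absorbed into a single multiplicative constant, giving $|F_{x_1}|\le\tilde C_2\ln(1+1/(1-x))$.

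The main work is the matching lower bound $|F_{x_1}|\gtrsim\ln(1+1/(1-x))$ on $[1/2,1)$. Here I would use Lemma~\ref{lem:Fx1_estimate_2}: since $x\ge\tfrac12$ and $1+(f(x)+f(0))^2\le1+4M^2$, it remains to show $\ln(1+1/((1-x)^2+f(x)^2))\gtrsim\ln(1+1/(1-x))$, i.e.\ an upper bound $f(x)^2\lesssim1-x$. This is where the precise shape of the upper barrier $v_\Lambda$ matters. Write $t=v_\Lambda(x)/M$, so $W(t)=\Lambda(1-x)/M$. On the portion of $[1/2,1)$ where $v_\Lambda(x)\le M$ (i.e.\ $t\le1$, which, since $\Lambda\ge5M$, is exactly $[x_0,1)$ for some $x_0\in(1/2,1)$ with $v_\Lambda(x_0)=M$) the elementary inequality $W(t)\ge\tfrac{t/2}{1+\ln(1+2/t)}$ from the proof of Lemma~\ref{lem:lower_barrier}, together with $t(1+\ln(1+2/t))\to0$ as $t\to0$, yields $W(t)\ge ct^2$ for $t\in(0,1]$, hence $f(x)^2\le v_\Lambda(x)^2=M^2t^2\le(M^2/c)W(t)=(M\Lambda/c)(1-x)$; on the remaining compact piece $[1/2,x_0]$ one simply uses $f(x)\le M$ and continuity, as $1-x$ is bounded away from $0$ and $\infty$ there. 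Feeding this back into Lemma~\ref{lem:Fx1_estimate_2} gives $|F_{x_1}|\ge\tilde C_1\ln(1+1/(1-x))$, completing the estimates; in particular $-f'(x)\ge\tilde C_1\ln(1+1/(1-x))\to+\infty$ as $x\to1^-$. I expect the only delicate points to be the constant bookkeeping and the repeated passage from ``comparable up to additive constants inside logarithms'' to ``comparable up to multiplicative constants,'' both routine once one works on $[1/2,1)$ where $\ln(1+1/(1-x))$ is bounded below.

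For the consequences: $f\in C^1(-1,1)$ with $f>0$ on $(-1,1)$, $f'<0$ on $(0,1)$, and $f'(x)\to-\infty$ as $x\to1^-$ (and, by evenness of $f$, $f'(x)\to+\infty$ as $x\to-1^+$), so $f\colon[0,1]\to[0,f(0)]$ is a decreasing homeomorphism that is $C^1$ on $(0,1)$; the inverse function theorem gives $f^{-1}\in C^1(0,f(0))$ with $(f^{-1})'(y)=1/f'(f^{-1}(y))$, and since $f'(f^{-1}(y))\to-\infty$ as $y\to0^+$ this extends to $f^{-1}\in C^1[0,f(0))$ with $(f^{-1})'(0)=0$. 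The boundary $\partial D$ is the union of the two graphs $x_2=f(x_1)$ and $x_2=-f(x_1)$, $x_1\in[-1,1]$, which meet only at $(\pm1,0)$ because $f>0$ on $(-1,1)$; away from $(\pm1,0)$ it is locally the $C^1$ graph of $\pm f$ over the $x_1$-axis, and near $(\pm1,0)$ it is the graph $x_1=\pm f^{-1}(|x_2|)$ over the $x_2$-axis, which is $C^1$ at $x_2=0$ because $\tfrac{d}{dx_2}f^{-1}(|x_2|)=\mathrm{sgn}(x_2)\,(f^{-1})'(|x_2|)\to0$. Patching these finitely many local $C^1$ graph representations over the compact connected set $\partial D$ exhibits it as a simple closed $C^1$ curve, which is the claim.
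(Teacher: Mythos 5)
Your proposal is correct and follows essentially the same route as the paper: fixed-point identity plus Proposition \ref{prop:implicit_existence} and the implicit function theorem, then two-sided bounds on $F_{x_1}$ and $F_{x_2}$ along the curve from Lemmas \ref{lem:Fx1_estimate}--\ref{lem:Fx2_estimate_2}, with the barriers $u_\lambda\le f\le v_\Lambda$ supplying the needed geometric input. The only (harmless) deviation is in converting the upper barrier for the $[1/2,1)$ lower bound — you use $W(t)\gtrsim t^2$ to get $f(x)^2\lesssim 1-x$, whereas the paper uses $W^{-1}(s)\le 2s(1+\ln(1+1/s))$ to get $f(x)\lesssim (1-x)\ln(1+1/(1-x))$ — and both feed into Lemma \ref{lem:Fx1_estimate_2} to give the same logarithmic lower bound.
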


\begin{proof}
One should keep in mind that $\Lambda > M > 1 > \lambda>0$.
It is a direct result of Proposition \ref{prop:implicit_existence} that $f=\mtx{R}(f)\in \mathbb{M}_1$. We thus only need to establish the bounds for $f'(x)$. Since $f$ is an even function, we may only consider $x\in[0,1]$. Recall that $f\in \mathbb{D}$ implies 
\[f(x)\geq u_\lambda(x) = \lambda (1-x^2)\geq \lambda (1-x).\] 
Also, with $W$ defined in \eqref{eqt:v_definition}, we have
\[W(t)\geq \frac{t/2}{1+\ln(1+2/t)},\quad t\geq 0,\]
which implies
\[W^{-1}(s) \leq 2s\big(1+\ln(1+1/s)\big),\quad s\geq 0.\]
Hence, 
\begin{align*}
f(x)\leq v_{\Lambda}(x) &= M\cdot W^{-1}\left(\frac{\Lambda (1-x)}{M}\right)\\
&\leq 2\Lambda (1-x)\left(1+\ln\left(1+\frac{M}{\Lambda(1-x)}\right)\right) \leq C_{\Lambda,M} (1-x)\ln\left(1+\frac{1}{1-x}\right).
\end{align*}

For $x\in[0,1/2]$, we invoke the Lemma \ref{lem:Fx1_estimate_2} and the second bound in Lemma \ref{lem:Fx1_estimate} to obtain 
\begin{align*}
-F_{x_1}(x,f(x);f) &\geq C_\lambda \frac{x}{1+f(0)^2}\ln\left(1+\frac{1}{(1-x)^2+f(x)^2}\right) \\
&\geq C_\lambda \frac{x}{1+M^2}\ln\left(1+\frac{1}{1+M^2}\right) \geq C_{\lambda,M} x,
\end{align*}
and 
\[
-F_{x_1}(x,f(x);f) \leq \frac{x}{f(x)}\leq \frac{x}{\lambda (1-x)} \leq C_{\lambda} x.
\]

For $x\in[1/2,1)$, we use the Lemma \ref{lem:Fx1_estimate_2} and the first bound in Lemma \ref{lem:Fx1_estimate} to get
\begin{align*}
-F_{x_1}(x,f(x);f) &\geq C_\lambda \frac{x}{1+f(0)^2}\ln\left(1+\frac{1}{(1-x)^2+f(x)^2}\right) \\
&\geq C_\lambda \frac{1}{1+M^2}\ln\left(1+\frac{1}{C_{\Lambda,M} (1-x)^2\big(\ln(1+1/(1-x))\big)^2}\right)\\
&\geq C_{\lambda,M} \ln\left(1+ \frac{1}{C_{\Lambda,M}(1-x)^2|\ln(1-x)|^2}\right)\\
&\geq C_{\Lambda,\lambda,M} \ln\left(1+\frac{1}{1-x}\right),
\end{align*}
and 
\begin{align*}
-F_{x_1}(x,f(x);f) &\leq C\left(1 + \ln\left(1+\frac{f(0)}{f(x)}\right)\right)\\
&\leq C\left(1 + \ln\left(1+\frac{M}{\lambda(1-x)}\right)\right) \leq C_{\lambda,M} \ln\left(1+\frac{1}{1-x}\right).
\end{align*}

Moreover, we have by Lemma \ref{lem:Fx2_estimate} and Lemma \ref{lem:Fx2_estimate_2} that, for any $x\in[0,1)$, 
\[C\geq -F_{x_2}(x,f(x);f) \geq \frac{C_{\lambda}}{\max\{1,f(x)^3\}} \geq C_{\lambda,M}.\]

Then, combining all the estimates above and using the implicit function theorem that
\[-f'(x) = \frac{F_{x_1}(x,f(x);f)}{F_{x_2}(x,f(x);f)},\]
we obtain the claimed estimates for $-f'(x)$ on $[0,1)$. It then follows that $\lim_{x\rightarrow 1^-}f'(x) = -\infty$. Note that these estimates for $f'$ immediately implies that $(f^{-1})'(y)<0$ for $y\in(0,f(0))$, 
\[-(f^{-1})'(y) \leq \frac{C_{\Lambda,\lambda,M}}{\ln\left(1+\frac{1}{1-f^{-1}(y)}\right)}\leq \frac{C_{\Lambda,\lambda,M}}{\ln(1+\lambda/y)}<+\infty,\quad y\in(0,f(1/2)], \]
\[-(f^{-1})'(y) \leq \frac{C_{\Lambda,\lambda,M}}{f^{-1}(y)} <+\infty,\quad y\in[f(1/2),f(0)),\]
and $\lim_{y\rightarrow 0^+}(f^{-1})'(y) = 0$. Hence, $f^{-1}\in C^1[0,f(0))$. The proof is thus completed.
\end{proof}

We remark that the $C^1$-regularity of $\partial D = \{(x_1,x_2):x_1\in[-1,1],x_2=\pm f(x_1)\}$ is optimal at the two points $(\pm1,0)$, since $f(x)$ behaves like $(1-|x|)\ln(1+1/(1-|x|))$ near $x=\pm1$. However, we can further show that a fixed point $f$ of $\mtx{R}$ is in fact infinitely smooth in $(-1,1)$.

\begin{theorem}\label{thm:analyticity}
Let $f\in \mathbb{D}$ be a fixed point of $\mtx{R}$. Then $f$ is locally analytic in $(-1,1)$.
\end{theorem}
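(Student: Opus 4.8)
The plan is to first improve the $C^1$ regularity of $f$ given by Theorem~\ref{thm:C1_regularity} to $C^\infty$, and then to promote smoothness to real-analyticity, in both cases near an arbitrary interior point of $\Gamma=\{(x_1,f(x_1)):x_1\in(-1,1)\}$. The key structural facts are these: a fixed point $f=\mtx{R}(f)$ means precisely that the overall stream function $\psi:=\phi(\cdot\,;f)-c(f)x_2$, which satisfies $-\Delta\psi=\mathbf{1}_{\Omega}$ in $\mathbb{H}$, vanishes identically on $\Gamma$; moreover, on $\Gamma$ one has $F(x_1,f(x_1);f)=0$, hence $\phi=c(f)x_2$ there, so $F_{x_2}=(\phi_{x_2}x_2-\phi)/x_2^2=(\phi_{x_2}-c(f))/x_2=\psi_{x_2}/x_2$ on $\Gamma$, and since $F_{x_2}<0$ for $x_2>0$ by Lemma~\ref{lem:F_partial}, we get $\psi_{x_2}<0$ along $\Gamma$. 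Thus $\Gamma$ is a regular zero level curve of $\psi$ (with $\nabla\psi\neq0$), and by the maximum principle $\psi>0$ in the interior of $\Omega$ while $\psi<0$ just outside $\Gamma$, so near an interior point of $\Gamma$ we have $-\Delta\psi=\mathbf{1}_{\{\psi>0\}}$ with the interface lying in the non-degenerate part of the free boundary.

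A naive Schauder bootstrap stalls here, because the right-hand side $\mathbf{1}_{\Omega}$ is discontinuous across $\Gamma$: knowing $\Gamma\in C^{k,\alpha}$ and solving $-\Delta\psi=1$ in $\Omega$ (resp.\ $\Delta\psi=0$ outside) with zero Dirichlet data only returns $\psi\in C^{k,\alpha}$ up to $\Gamma$ from each side, hence $f'=-\psi_{x_1}/\psi_{x_2}\in C^{k-1,\alpha}$ on $\Gamma$, with no gain. To break this I would use a hodograph transform. For fixed $x_0\in(-1,1)$ and $p=(x_0,f(x_0))$: since $\psi\in W^{2,p}_{\mathrm{loc}}(\mathbb{H})\subset C^{1,\alpha}_{\mathrm{loc}}$ and $\psi_{x_2}(p)<0$, the map $\Phi(x_1,x_2):=(x_1,\psi(x_1,x_2))$ is a local $C^{1,\alpha}$ diffeomorphism near $p$, smooth from each side of $\Gamma$, sending $\Gamma$ to a flat segment $\{t=0\}$ and $\Omega$ to $\{t>0\}$. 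Writing $\Phi^{-1}(y_1,t)=(y_1,v(y_1,t))$ one has $f(y_1)=v(y_1,0)$, and transcribing $-\Delta\psi=1$ (on $\{t>0\}$) and $\Delta\psi=0$ (on $\{t<0\}$) through $\Phi$ shows that $v$ solves, on each side, a uniformly elliptic quasilinear equation whose coefficients are polynomials (hence real-analytic) in $\nabla v$, while the global $C^{1,\alpha}$ regularity of $\psi$ forces $v,\nabla v$ to be continuous across $\{t=0\}$. This is a nonlinear elliptic transmission problem on the \emph{fixed, flat, analytic} interface $\{t=0\}$; standard elliptic regularity for such problems bootstraps $v$ to $C^\infty$ up to $\{t=0\}$ from each side, hence $f\in C^\infty_{\mathrm{loc}}(-1,1)$, and the analytic regularity theory for nonlinear elliptic problems (Morrey's analyticity theorem; equivalently the free-boundary analyticity results of Kinderlehrer--Nirenberg--Spruck, whose hypotheses — $C^1$ interface, $\nabla\psi\neq0$ on it, analytic equation and data — are exactly what we verified above) then upgrades $v$ to real-analytic up to $\{t=0\}$. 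In particular $f(y_1)=v(y_1,0)$ is real-analytic near $x_0$; since $x_0\in(-1,1)$ was arbitrary, $f$ is locally analytic in $(-1,1)$.

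The main obstacle is the analyticity step and, more precisely, the genuinely transmission-type nature of the problem: because $\Delta\psi$ jumps across $\Gamma$, $\psi$ is only $C^{1,\alpha}$ globally, so one can neither reflect across $\Gamma$ nor straighten $\Gamma$ by a merely $C^\infty$ change of coordinates — the latter would ruin analyticity of the equation. The hodograph transform must therefore be carried out with the original, analytic equations retained on each side, and the interface conditions ($\psi=0$ and continuity of $\nabla\psi$) must be recast as analytic transmission conditions on the now-flat interface before any analytic elliptic regularity theorem can be invoked. Checking ellipticity and analyticity of the transformed system, and that the resulting transmission problem falls within the scope of the Kinderlehrer--Nirenberg--Spruck and Morrey machinery, is the delicate part; by contrast the $W^{2,p}$ and Schauder estimates for $\phi$, the continuity of $\nabla\psi$ across $\Gamma$, and the reduction to an arbitrary interior point are routine.
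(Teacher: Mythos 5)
Your structural setup is the same as the paper's: you correctly identify $\psi=x_2F$, check $\psi_{x_2}=x_2F_{x_2}<0$ on $\Gamma$ via Lemma \ref{lem:F_partial}, recast the problem as $-\Delta\psi=\mathbf{1}_{\{\psi>0\}}$ near a non-degenerate free boundary point, and aim at the Kinderlehrer--Nirenberg--Spruck/Morrey analyticity machinery, which is exactly what the paper invokes (\cite[Theorem 3.1']{kinderlehrer1978regularity}, see also \cite[Theorem 8.1]{10.1215/S0012-7094-07-13624-X}). The divergence is in what it takes to make that machinery applicable, and this is where your proposal has a genuine gap. The cited free-boundary analyticity results do not apply under only ``$C^1$ interface, $\nabla\psi\neq0$, analytic equation and data''; they additionally require the solution to be sufficiently regular up to the free boundary \emph{from each side} (the paper reads this as: $\psi$, equivalently $\phi$, is $C^2$ up to $\Gamma$ on either side). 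Verifying precisely this hypothesis is the bulk of the paper's proof: starting from $f\in C^{1,\alpha}_{loc}$, it writes $\nabla^2\phi$ as a principal-value singular integral, proves an $L^\infty$ bound (hence $f\in C^{1,1}_{loc}$) and then Hölder continuity of $\nabla^2\phi$ up to $\Gamma$ on each side, using the refined geometric estimate of Lemma \ref{lem:Gamma_local_estimate} (in the spirit of \cite{bertozzi1993global}). Your proposal omits this step entirely and instead asserts that, after the partial hodograph transform, ``standard elliptic regularity for transmission problems'' bootstraps the transformed function $v$ from piecewise $W^{2,p}\cap C^{1,\alpha}$ to piecewise $C^\infty$ up to the flat interface. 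That bootstrap is exactly where the analytic content lies: Schauder/ADN theory for the quasilinear transmission system is an a priori estimate for solutions already known to be piecewise $C^{2,\alpha}$ up to the interface, and upgrading a strong $W^{2,p}$ solution whose right-hand side jumps across the interface requires a real argument (a flat-interface potential estimate plus coefficient freezing), which is essentially the same estimate the paper proves directly for $\nabla^2\phi$ with a curved $C^{1,\alpha}$ interface. So while your route is workable in principle (it amounts to re-proving the KNS reduction rather than quoting it), as written the decisive regularity step is neither quoted correctly nor supplied, and your list of ``verified hypotheses'' for KNS/Morrey is incomplete on precisely the point the paper labors over.

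A secondary remark: your instinct that a naive two-sided Schauder bootstrap on the original variables ``stalls'' is right, and your hodograph formulation (coefficients polynomial in $\nabla v$, only the zeroth-order right-hand side jumping across $\{t=0\}$, $v$ and $\nabla v$ continuous across) is a sound description of the transformed problem. If you want to complete your version of the argument, the missing ingredient is a proof that the strong solution $v$ is piecewise $C^{2,\alpha}$ up to $\{t=0\}$ (after which tangential differentiation and Morrey--Nirenberg analytic regularity for the reflected coupled system do finish the job); alternatively, prove the piecewise $C^2$ regularity of $\phi$ in the original variables as the paper does and then cite \cite{kinderlehrer1978regularity} as a black box.
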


\begin{proof}
Write $F(\vx) = F(\vx;f)$ and $\phi(\vx) = \phi(\vx;f)$. As before, denote $\mathbb{H} := \{(x_1,x_2): x_1\in \R,\, x_2\geq 0\}$, $\Omega := \{(x_1,x_2): x_1\in[-1,1],\, x_2\in[0,f(x_1)]\}$, and $\Gamma := \{(x_1,x_2): x_1\in[-1,1],\, x_2=f(x_1)\}$. We also denote $\Gamma^\circ = \{(x_1,x_2): x_1\in(-1,1),\, x_2=f(x_1)\}$.

Recall $\phi$ solves $-\Delta\phi = \mathbf{1}_\Omega$ in $\mathbb{H}$ and $\phi|_{x_2=0}=0$. By the classic elliptic regularity theory, $\phi\in C^{1,\alpha}_{loc}(\mathbb{H})$ for any $\alpha\in(0,1)$. Hence, $F = \phi/x_2 - c(f)\in C^{1,\alpha}_{loc}(\mathbb{H}^\circ)$. Take an arbitrary $x_0\in (-1,1)$. By the definition \eqref{eqt:D_definition} of $\mathbb{D}$, $M\geq f(x_0)\geq u_{\lambda}(x_0) \geq \lambda(1-|x_0|)$. Then, by Lemma \ref{lem:Fx2_estimate}, $|F_{x_2}|\geq C$ in a neighborhood of $(x_0,f(x_0))$ with some universal constant $C>0$. Since $f$ solves $F(x,f(x))=0$, by the implicit function theorem, $f$ is $C^{1,\alpha}$ in a neighborhood of $x_0$, with the $C^{1,\alpha}$-norm depending on $\alpha$ and $1-|x_0|$. Therefore, $f\in C^{1,\alpha}_{loc}((-1,1))$. 

Denote $\psi(\vx) := \phi(\vx) - c(f)x_2$. We may also rewrite the equation for $\phi$ in terms of $\psi$ as 
\[-\Delta\psi = \mathbf{1}_{\{\psi>0\}}.\]
This is in the form of the unstable free boundary problem \cite{10.1215/S0012-7094-07-13624-X}. Moreover, for arbitrary $(x_1,x_2) = (x,f(x))\in\Gamma^\circ$ with $x$ in a suitable neighborhood of $x_0$, one has
\[
|\nabla\psi(x_1,x_2)|\geq |\psi_{x_2}(x_1,x_2)| = |\partial_{x_2}(x_2F(x_1,x_2))| = |x_2F_{x_2}(x_1,x_2)| \geq C(1-|x_0|)>0,
\]
which implies that in the suitable neighborhood of $(x_0,f(x_0))$, points on $\Gamma$ are not singular points. Recall $\psi = 0$ on $\Gamma$. In view of \cite[Theorem 3.1']{kinderlehrer1978regularity} (see also \cite[Theorem 8.1]{10.1215/S0012-7094-07-13624-X}), in order to show the local analyticity of $f$ (namely the local analyticity of $\Gamma$), it suffices to verify that, in a neighborhood of $(x_0,f(x_0))$, $\psi$ (or $\phi$) is $C^2$ up to $\Gamma$ on either side of $\Gamma$.

By a direct calculation, one finds that the Hessian of $\phi$ for $\vx\in \mathbb{H}$ is given by 
\[\nabla^2\phi(\vx) = -\frac{1}{2}\kappa_\Omega(\vx)\cdot \Id + \frac{1}{2\pi}\mathrm{P.V.}\int_{\Omega}G(\vx,\vy)\idiff{\vy},\]
where
\[
\kappa_\Omega(\vx) = \left\{\begin{array}{ll}
1, &\vx\in \Omega^\circ,\\
1/2, & \vx\in \Gamma^\circ,\\
0,& \text{otherwise},
\end{array}\right.
\]
and
\begin{equation}\label{eqt:analyticity_step2}
G(\vx,\vy) = \sigma(\vx-\vy) - \sigma(\vx - \bar\vy),\quad \sigma(\vz) = \frac{1}{|\vz|^4}\left(\begin{array}{cc}
z_1^2-z_2^2 & 2z_1z_2\\
2z_1z_2 & z_2^2-z_1^2
\end{array}\right),
\end{equation}
with $\bar\vy = (y_1,-y_2)$. The $L^\infty$-bound of $\nabla^2\phi$ can be derived by following the argument in \cite[Proposition 1 \& Geometric Lemma]{bertozzi1993global}. Nevertheless, we shall give a self-contained proof using a refined estimate in Lemma \ref{lem:Gamma_local_estimate} below. It suffices to bound $|\nabla^2\phi|$ near $\Gamma$. Given $\vx = (x_1,x_2)\in \Omega^\circ$, suppose $\delta := \mathrm{dist}(\vx,\Gamma)\ll1$ and it is achieved at some $\vx_*\in \Gamma^\circ$. Let $(\cos\beta,\sin\beta)$ be the outward unit normal of $\Gamma$ at $\vx_*$ for some $\beta\in[0,2\pi)$. Note that $|\sigma(\vz)|\leq C|\vz|^{-2}$. Then, with $\mu\in(0,x_2]$, we can use Lemma \ref{lem:Gamma_local_estimate} to find that
\begin{align*}
|\nabla^2\phi(\vx)| &\leq C + C\int_{\Omega}|\sigma(\vx-\bar \vy)|\idiff \vy + C\int_{\Omega\backslash B_\mu(\vx)}|\sigma(\vx-\vy)|\idiff \vy + C\left|\mathrm{P.V.}\int_{\Omega\cap B_\mu(\vx)}\sigma(\vx-\vy)\idiff \vy\right|\\
&\leq C + C\int_\mu^{2f(0)+2}\frac{1}{r}\idiff r + C\left|\int_{\min\{\delta,\mu\}/\mu}^1 \sqrt{1-s^2} \idiff s\left(\begin{array}{cc}
\cos 2\beta & \sin 2\beta\\
\sin2\beta & -\cos2\beta
\end{array}\right)\right| + C\mu^\alpha\\
&\leq C(1 + |\ln\mu| + \mu^\alpha),
\end{align*}
where $C$ depends on $\alpha$ and the local $C^{1,\alpha}$-norm of $f$. Choosing $\mu=x_2$ yields the desired $L^\infty$-estimate for $\nabla^2\phi$ in $\Omega^\circ$. The estimate in $\Omega^c$ can be obtained in a similar way. This allows us to conclude via the implicit function theorem again that $f\in C^{1,1}_{loc}((-1,1))$. In particular, the local $C^{1,1}$-norm of $f$ depends on $\alpha$, $1-|x_0|$, and the local $C^{1,\alpha}$-norm of $\phi$.

To show the desired $C^2$-regularity of $\phi$, it suffices to study the continuity of $\nabla^2\phi$ near $\Gamma$. Take $\vx,\vx'\in \Omega^\circ\cap B_{r_0}((x_0,f(x_0))$ with $r_0\ll \lambda(1-|x_0|)/2$. Denote $l=|\vx-\vx'|$ and $\vz = (\vx + \vx')/2$. We may assume that $l\ll \mu:=l^{1/2}\ll r_0$, so that $B_\mu(\vx),B_\mu(\vx'), B_\mu(\vz)\subset \mathbb{H}^\circ$. We then compute that  
\begin{align*}
|\nabla^2\phi(\vx) - \nabla^2\phi(\vx')| &= \frac{1}{2\pi}\left|\mathrm{P.V.}\int_\Omega G(\vx,\vy)\idiff{\vy} - \mathrm{P.V.}\int_\Omega G(\vx',\vy)\idiff{\vy}\right|\\
&\leq \frac{1}{2\pi}\int_\Omega \left|\sigma(\vx-\bar\vy) - \sigma(\vx'-\bar\vy)\right| \idiff{\vy}\\
&\quad + \frac{1}{2\pi}\int_{\Omega\backslash B_\mu(\vz)} \left|\sigma(\vx-\vy) - \sigma(\vx'-\vy)\right| \idiff{\vy}\\
&\quad +  \frac{1}{2\pi}\int_{\Omega \cap (B_\mu(\vz)\Delta B_\mu(\vx))} \left|\sigma(\vx-\vy)\right| \idiff{\vy}  +  \frac{1}{2\pi}\int_{\Omega \cap (B_\mu(\vz)\Delta B_\mu(\vx'))} \left| \sigma(\vx'-\vy)\right| \idiff{\vy}\\
&\quad + \frac{1}{2\pi}\left|\mathrm{P.V.}\int_{\Omega\cap B_\mu(\vx)} \sigma(\vx-\vy)\idiff{\vy} - \mathrm{P.V.}\int_{\Omega\cap B_\mu(\vx')} \sigma(\vx'-\vy)\idiff{\vy}\right|\\
&=: I_1 + I_2 + I_3 + I_4 .
\end{align*}
Here $A_1\Delta A_2$ denotes the symmetric difference of the sets $A_1$ and $A_2$. Note that $|\sigma(\vz)|\leq C|\vz|^{-2}$ and $|\nabla \sigma(\vz)|\leq C|\vz|^{-3}$. Hence, we can calculate in the polar coordinate to find that 
\[I_1 + I_2 + I_3 \leq C|\vx -\vx'|\int_\mu^\infty\frac{1}{r^2}\idiff r  + C \int_{\mu-l/2}^{\mu+l/2}\frac{1}{r}\idiff r\leq C\frac{l}{\mu} \leq Cl^{1/2}.\]

It remains to control $I_4$. Denote $\delta := \mathrm{dist}(\vx,\Gamma)$ and $\delta' := \mathrm{dist}(\vx',\Gamma)$. By the way we choose $\vx,\vx'$, we have $\delta,\delta'\leq r_0$. There exist some $\vx_*, \vx_*'\in \Gamma^\circ\cap B_{2r_0}((x_0,f(x_0))$ such that $\delta = |\vx - \vx_*|$ and $\delta' = |\vx' -\vx_*'|$. It is not hard to verify that $|\delta-\delta'|\leq |\vx-\vx'| = l$. Let $(\cos\beta,\sin\beta)$ and $(\cos\beta',\sin\beta')$ be the outward normal directions of $\Gamma$ (with respect to $\Omega$) at $\vx_*$ and $\vx_*'$, respectively, for some $\beta,\beta'\in[0,2\pi)$. It follows that
\[|\beta-\beta'|\leq  C |\vx_*-\vx_*'| \leq C(\delta + \delta' + l) \leq C(\max\{\delta\,,\delta'\} + l), \]
where the constant $C$ depends on the local $C^{1,1}$-norm of $f$ in $[x_0-2r_0, x_0+2r_0]$. Recall $\mu = l^{1/2}$, and let $\tilde \delta = \min\{\delta,\mu\}$ and $\tilde \delta' = \min\{\delta',\mu\}$. We then apply Lemma \ref{lem:Gamma_local_estimate} below (with $\alpha=1$) to obtain
\begin{align*}
|I_4| &\leq C \mu + \frac{1}{\pi}\left|\int_{\tilde\delta/\mu}^1 \sqrt{1-s^2}\idiff s\left(\begin{array}{cc}
\cos 2\beta & \sin 2\beta\\
\sin2\beta & -\cos2\beta
\end{array}\right)
-
\int_{\tilde \delta'/\mu}^1 \sqrt{1-s^2} \idiff s\left(\begin{array}{cc}
\cos 2\beta' & \sin 2\beta'\\
\sin2\beta' & -\cos2\beta'
\end{array}\right)
\right|\\
&\leq C\mu + \frac{C|\tilde \delta-\tilde \delta'|}{\mu} + C|\beta-\beta'|\left(1-\frac{\max\{\tilde \delta \,,\tilde \delta'\}}{\mu}\right)\\
&\leq C\mu + \frac{C|\delta-\delta'|}{\mu} + C(\max\{\delta\,,\delta'\}+l)\left(1-\frac{\min\{\max\{\delta\,,\delta'\}\,,\mu\}}{\mu}\right)\\
&\leq C\mu + C\frac{l}{\mu} + Cl\leq Cl^{1/2}.
\end{align*}

Finally, combining the estimates above, we obtain 
\[|\nabla^2\phi(\vx) - \nabla^2\phi(\vx')|\leq I_1+I_2+I_3+I_4\leq C|\vx-\vx'|^{1/2},\]
where $C$ depends on the local $C^{1,1}$-norm of $f$. This implies the continuity of $\nabla^2\phi$ in $\Omega^\circ$ up to $\Gamma$. The continuity of $\nabla^2\phi$ on the other side of $\Gamma$ can be justified similarly. Therefore, Theorem 3.1' in \cite{kinderlehrer1978regularity} applies and the local analyticity of $\Gamma$ follows. 
\end{proof}

The following auxiliary lemma is needed in the proof above.

\begin{lemma}\label{lem:Gamma_local_estimate}
Let $\Gamma\subset \R^2$ be a simple closed $C^1$-curve and let $\Omega$ be its interior domain. Given $\vx\in \Omega^\circ$, suppose $\delta := \mathrm{dist}(\vx,\Gamma)\ll1$ and is achieved at some point $\vx_*\in \Gamma$, i.e. $|\vx-\vx_*|=\delta$. Let $\mu>0$ be sufficiently small such that, either $\mu\leq \delta$, or $\Gamma\cap B_\mu(\vx)$ divides $B_\mu(\vx)$ into two simply connected parts. Furthermore, suppose in the latter case $\Gamma$ is $C^{1,\alpha}$ in $B_\mu(\vx)$ for some $\alpha\in(0,1]$. Let $(\cos\beta,\sin\beta)$ be the outward unit normal of $\Gamma$ at $\vx_*$ for some $\beta\in[0,2\pi)$. Let $\sigma(\vz)$ be defined in \eqref{eqt:analyticity_step2}. Then, if $\mu\leq \delta$, 
\[\mathrm{P.V.}\int_{\Omega\cap B_\mu(\vx)} \sigma(\vx-\vy)\idiff{\vy} = 0;\]
if instead $\mu>\delta$,
\begin{equation}\label{eqt:Gamma_local}
\left|\mathrm{P.V.}\int_{\Omega\cap B_\mu(\vx)} \sigma(\vx-\vy)\idiff{\vy} + 2\int_{\delta/\mu}^1 \sqrt{1-s^2}\idiff s\left(\begin{array}{cc}
\cos 2\beta & \sin 2\beta\\
\sin2\beta & -\cos2\beta
\end{array}\right)\right|\leq C\mu^\alpha,
\end{equation}
where the constant $C$ depends on $\alpha$ and the local $C^{1,\alpha}$-norm of $\Gamma$. 
\end{lemma}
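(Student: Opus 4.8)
\emph{Proof proposal.}

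The proof rests on the elementary fact that $\sigma$ is a Calder\'on--Zygmund kernel with vanishing angular average: in polar coordinates $\vz=r(\cos\theta,\sin\theta)$ one has $\sigma(\vz)=r^{-2}Q(\theta)$ with $Q(\theta):=\left(\begin{smallmatrix}\cos2\theta & \sin2\theta\\ \sin2\theta & -\cos2\theta\end{smallmatrix}\right)$, so that the matrix occurring in \eqref{eqt:Gamma_local} is $Q(\beta)$, and $\int_0^{2\pi}Q(\theta)\idiff\theta=0$. Changing variables $\vz=\vx-\vy$ and integrating the angle first gives the cancellation identity $\mathrm{P.V.}\int_{B_\rho(\vx)}\sigma(\vx-\vy)\idiff\vy=0$ for every $\rho>0$. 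If $\mu\le\delta$ this already settles the lemma: then $B_\mu(\vx)\subseteq B_\delta(\vx)\subseteq\Omega$, hence $\Omega\cap B_\mu(\vx)=B_\mu(\vx)$ and the integral vanishes. From now on assume $\mu>\delta$.

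The plan is to replace $\Omega$ by a half-plane near the nearest point $\vx_*$. After the substitution $\vz=\vx-\vy$ the integral becomes $\mathrm{P.V.}\int_{(\vx-\Omega)\cap B_\mu(0)}\sigma(\vz)\idiff\vz$ with $\vx-\Omega:=\{\vz:\vx-\vz\in\Omega\}$. Since the segment from $\vx$ to $\vx_*$ is normal to $\Gamma$, one has $\vx=\vx_*-\delta(\cos\beta,\sin\beta)$, so $\vx-\vx_*=-\delta(\cos\beta,\sin\beta)$; and by the $C^{1,\alpha}$ hypothesis on $\Gamma\cap B_\mu(\vx)$ the arc $\vx-\Gamma$ is a $C^{1,\alpha}$ graph over its tangent line $\{\vz:\vz\cdot(\cos\beta,\sin\beta)=-\delta\}$, lying at signed normal height $-\delta+g(t)$ with $|g(t)|\le C|t|^{1+\alpha}$ for $|t|\le\mu$, where $t$ is the tangential coordinate and $C$ is the local $C^{1,\alpha}$-norm of $\Gamma$. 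Set $H:=\{\vz:\vz\cdot(\cos\beta,\sin\beta)\ge-\delta\}$; both $\vx-\Omega$ and $H$ contain $B_\delta(0)$, and inside $B_\mu(0)$ they are the same side of $\vx-\Gamma$, resp.\ of its tangent line, so
\[
\Bigl|\,\mathrm{P.V.}\int_{(\vx-\Omega)\cap B_\mu(0)}\sigma(\vz)\idiff\vz-\mathrm{P.V.}\int_{H\cap B_\mu(0)}\sigma(\vz)\idiff\vz\,\Bigr|\le\int_{((\vx-\Omega)\,\Delta\, H)\cap B_\mu(0)}|\sigma(\vz)|\idiff\vz ,
\]
no principal value being needed on the right, as the symmetric-difference region is the thin curved slab between $\vx-\Gamma$ and its tangent line, of transverse width $\le C|t|^{1+\alpha}$ over $|t|\le\mu$, and is disjoint from $B_\delta(0)$. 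There $|\sigma(\vz)|\le C|\vz|^{-2}\le C|t|^{-2}$, so the slab integral is $\le\int_{-\mu}^{\mu}C|t|^{\alpha-1}\idiff t\le C\mu^\alpha/\alpha$, which is the error term in \eqref{eqt:Gamma_local}.

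It remains to evaluate $\mathrm{P.V.}\int_{H\cap B_\mu(0)}\sigma$ exactly. Here $H\cap B_\mu(0)=B_\mu(0)\setminus\{|\vz|<\mu,\ \vz\cdot(\cos\beta,\sin\beta)<-\delta\}$ and the removed set is bounded away from $0$, so the cancellation identity gives $\mathrm{P.V.}\int_{H\cap B_\mu(0)}\sigma=-\int_{\{|\vz|<\mu,\ \vz\cdot(\cos\beta,\sin\beta)<-\delta\}}\sigma(\vz)\idiff\vz$. In polar coordinates with $\psi=\theta-\beta$ the removed set is $\{\,\delta/|\cos\psi|<r<\mu,\ \psi\in(\pi-\psi_0,\pi+\psi_0)\,\}$ with $\psi_0:=\arccos(\delta/\mu)$; substituting $\psi=\pi+\chi$ and using the $\pi$-periodicity of $Q$ turns this into $\int_{-\psi_0}^{\psi_0}\ln\bigl(\tfrac{\mu\cos\chi}{\delta}\bigr)Q(\chi+\beta)\idiff\chi$. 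The weight is even in $\chi$, which annihilates the $\sin2\chi$-component of $Q(\chi+\beta)$ and leaves $\bigl(\int_{-\psi_0}^{\psi_0}\ln(\tfrac{\mu\cos\chi}{\delta})\cos2\chi\idiff\chi\bigr)Q(\beta)$; integrating by parts (the factor $\int\ln\cos\chi\,\cos2\chi\idiff\chi$ produces $\sin^2\chi$) the $\ln(\mu/\delta)$-terms cancel exactly and the scalar collapses to $\arccos(\delta/\mu)-\tfrac{\delta}{\mu}\sqrt{1-\delta^2/\mu^2}=2\int_{\delta/\mu}^1\sqrt{1-s^2}\idiff s$. Hence $\mathrm{P.V.}\int_{H\cap B_\mu(0)}\sigma=-2\bigl(\int_{\delta/\mu}^1\sqrt{1-s^2}\idiff s\bigr)Q(\beta)$, and combining with the slab bound yields \eqref{eqt:Gamma_local}.

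I expect the main obstacle to be the explicit evaluation just sketched: the naive angle-by-angle radial integration of $\sigma$ over $H\cap B_\mu(0)$ is logarithmically divergent at the origin, so one must route through the disk mean-zero identity, and then recognize the exact cancellation of the $\ln(\mu/\delta)$-terms that collapses the answer to the clean ``area'' constant $2\int_{\delta/\mu}^1\sqrt{1-s^2}\idiff s$ carrying the correct matrix factor $Q(\beta)$. The secondary delicate point is the geometric bookkeeping for the error region, in particular checking that $(\vx-\Omega)\,\Delta\, H$ stays outside $B_\delta(0)$ so that the $|\vz|^{-2}$ singularity of $\sigma$ never interferes with the $O(\mu^\alpha)$ bound.
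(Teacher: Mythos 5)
Your proposal is correct and follows essentially the same route as the paper: reduce to the half-plane through the tangent line at $\vx_*$ using the $C^{1,\alpha}$ graph bound on the symmetric difference (giving the $C\mu^\alpha$ error), then use the mean-zero property of $\sigma$ over disks to convert the half-plane principal value into minus the integral over the removed cap, which evaluates to $\bigl(\arccos(\delta/\mu)-\tfrac{\delta}{\mu}\sqrt{1-\delta^2/\mu^2}\bigr)Q(\beta)=2\int_{\delta/\mu}^1\sqrt{1-s^2}\,\mathrm{d}s\,Q(\beta)$. The only (harmless) difference is the order of integration in that last evaluation: the paper integrates the angle first at fixed radius, obtaining $\sin\bigl(2\arccos(\delta/r)\bigr)Q(\beta)/r$ directly, whereas you integrate the radius first and recover the same constant after the $\ln(\mu/\delta)$ cancellation, which you have verified correctly.
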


\begin{proof}
If $\mu\leq\delta$, then  
\[\mathrm{P.V.}\int_{\Omega\cap B_\mu(\vx)} \sigma(\vx-\vy)\idiff{\vy} = \mathrm{P.V.}\int_{B_\mu(\vx)} \sigma(\vx-\vy)\idiff{\vy} = \mathrm{P.V.}\int_{B_\mu(0)} \sigma(\vy)\idiff{\vy} = 0,\]
which is the first claim of the lemma.

Otherwise, if $\mu>\delta$, we shall compare the integral on $\Omega\cap B_\mu(\vx)$ to that on $U\cap  B_\mu(\vx)$, where $U := \{\vy\in \R: \la\vy-\vx_*\,,(\cos\beta,\sin\beta)\ra<0\}$. We can use the $C^{1,\alpha}$-regularity of $\Gamma$ to find that
\begin{equation}\label{eqt:Gamma_local_step}
\begin{split}
\left|\mathrm{P.V.}\int_{\Omega\cap B_\mu(\vx)} \sigma(\vx-\vy)\idiff{\vy} - \mathrm{P.V.}\int_{U\cap B_\mu(\vx)} \sigma(\vx-\vy)\idiff{\vy}\right| &\leq \int_{(\Omega\Delta U)\cap  B_\mu(\vx)} |\sigma(\vx-\vy)|\idiff{\vy}\\
&\leq C\int_{-\mu}^\mu \idiff z_1 \int_{\delta-Cz_1^{1+\alpha}}^{\delta+Cz_1^{1+\alpha}}\frac{1}{z_1^2+z_2^2}\idiff z_2\\
&\leq C\int_{-\mu}^\mu z_1^{\alpha-1}\idiff z_1\leq C\mu^\alpha,
\end{split}
\end{equation}
where $C$ depends on the local $C^{1,\alpha}$-norm of $\Gamma$. Moreover, thanks to the simpler shape of the domain $U\cap B_\mu(\vx)$, we can use the polar coordinate to derive that
\begin{align*}
\mathrm{P.V.}\int_{U\cap B_\mu(\vx)} \sigma(\vx-\vy)\idiff{\vy} &= \mathrm{P.V.}\int_{B_\mu(\vx)} \sigma(\vx-\vy)\idiff{\vy} - \int_{B_\mu(\vx)\backslash U} \sigma(\vx-\vy)\idiff{\vy} \\
&= - \int_{B_\mu(\vx)\backslash U} \sigma(\vx-\vy)\idiff{\vy} \\
&= -\int_\delta^\mu \frac{1}{r} \int_{\beta - \arccos(\delta/r)}^{\beta +  \arccos(\delta/r)} \left(\begin{array}{cc}
\cos 2\theta & \sin 2\theta\\
\sin2\theta & -\cos2\theta 
\end{array}\right)\idiff \theta \idiff r\\
&= -\int_\delta^\mu \frac{1}{2r} \left.\left(\begin{array}{cc}
\sin 2\theta & -\cos 2\theta\\
-\cos2\theta & -\sin 2\theta 
\end{array}\right)\right|_{\beta - \arccos(\delta/r)}^{\beta + \arccos(\delta/r)}\idiff r \\
&= -\int_\delta^\mu \frac{1}{r} \sin\big(2\arccos(\delta/r)\big)\left(\begin{array}{cc}
\cos2\beta & \sin2\beta\\
\sin2\beta & -\cos 2\beta
\end{array}\right)\idiff r \\
&= -\int_\delta^\mu \frac{2\delta}{r^2}\left(1-\frac{\delta^2}{r^2}\right)^{1/2} \idiff r \left(\begin{array}{cc}
\cos2\beta & \sin2\beta\\
\sin2\beta & -\cos 2\beta
\end{array}\right)\\
&= -2\int_{\delta/\mu}^1 \sqrt{1-s^2} \idiff s \left(\begin{array}{cc}
\cos2\beta & \sin2\beta\\
\sin2\beta & -\cos 2\beta
\end{array}\right).
\end{align*}
Combining this and \eqref{eqt:Gamma_local_step} yields \eqref{eqt:Gamma_local}.
\end{proof}

\section{Numerical computation}\label{sec:numerical}
In this final section, we explain how to numerically obtain a solution $f$ of the problem \eqref{eqt:solution_condition_2}-\eqref{eqt:c_definition_2} using a simple fixed-point iteration method.

In the previous sections, we have shown theoretically the existence of a nontrivial solution of \eqref{eqt:solution_condition_2} by proving the existence of a fixed point of the nonlinear map $\mtx{R}$. Then, a natural idea to obtain an approximate numerical solution is by performing the fixed point iteration of $\mtx{R}$ from a suitable initial guess. However, since the map $\mtx{R}$ is determined by \eqref{eqt:solution_condition_2} in an implicit way (Proposition \ref{prop:implicit_existence}), it is not easy to implement its fixed-point iteration numerically with high accuracy. Yet, this implicit fixed-point iteration was employed in \cite{childress2018eroding} to compute Sadovskii's vortex patch, where an area renormalization technique must be applied in every iteration to guarantee convergence of the algorithm.

Instead, we choose to employ a simpler explicit iteration. Let us rewrite \eqref{eqt:solution_condition_2} as
\[\frac{\phi(x,f(x);f)}{c(f)} = f(x),\quad x\in[-1,1].\]
This naturally suggests the explicit iteration scheme
\begin{equation}\label{eqt:explicit_iteration}
f^{(n+1)} = \mtx{P}(f^{(n)}),\quad f^{(0)}\in\mathbb{V}, 
\end{equation}
where
\[\mtx{P}(f) := \frac{\phi(x,f(x);f)}{c(f)}.\]
Apparently, a fixed point of this map $\mtx{P}$ is also a nontrivial solution of \eqref{eqt:solution_condition_2}. 

The explicit iteration scheme \eqref{eqt:explicit_iteration} is much easier to implement numerically, for it only involves the computation of one-dimensional integrations of absolutely integrable functions (see \eqref{eqt:phi} and \eqref{eqt:cf}). Though we have not proved the convergence of this iteration scheme, it preforms excellently well in providing an accurate approximate solution and exhibits convergence to a unique terminal solution for a wide range of initial states $f^{(0)}$. The accuracy of an approximate solution $f^{(n)}$ can be measured by its residual
\[r^{(n)} = \mtx{P}(f^{(n)}) - f^{(n)}.\]
Numerically, the problem is discretized on a suitable adaptive mesh on $[-1,1]$, and the residual is evaluated on the grid points. We omit the routine implementation details. We have performed the iteration scheme numerically for different suitable initial guesses such as $f^{(0)} = (1-x^2)\ln(1+1/(1-x^2))$ and $f^{(0)} = \cos(\pi x/2)$. We observe that the solution $f^{(n)}$ seems to converge to a unique solution in all trials, and the (grid-value) maximum residual $|r^{(n)}|_{\max}$ decreases very quickly. As illustrated in Figure \ref{fig:numerical}, in all our numerical experiments, it only takes dozens of iterations for $f^{(n)}$ to become visually indistinguishable from the same profile (indicated by a dashed curve), with $|r^{(n)}|_{\max}$ dropping from $O(1)$ to below $10^{-4}$. We remark that the solution profile and the corresponding streamlines plotted in Figure \ref{fig:dipole}(a) are generated by an accurate approximate solution $\bar f$ obtained numerically via our fixed-point iteration \eqref{eqt:explicit_iteration} with $|\mtx{P}(\bar f)-\bar f|_{\max}$ smaller than $10^{-8}$. One should note that, unlike the implicit map $\mtx{R}$, the explicit map $\mtx{P}$ does not preserve the monotonicity of $f$ on $[0,1]$ (see Figure \ref{fig:numerical}(c)), which is why we choose to study the fixed-point problem of $\mtx{R}$ in the previous sections.

Our numerical results strongly suggest the uniqueness of the nontrivial solution to \eqref{eqt:solution_condition_2} and thus the uniqueness of the solution described in Theorem \ref{thm:main_formal} under scaling normalization. Moreover, the unique terminal profile in our numerical computation seems to be a concave function, even if the initial guess is not concave. This suggests that the vorticity support $D$ (or $\Omega$) is a convex domain.

\begin{figure}[!ht]
\centering
    \begin{subfigure}[b]{0.45\textwidth}
        \includegraphics[width=1\textwidth]{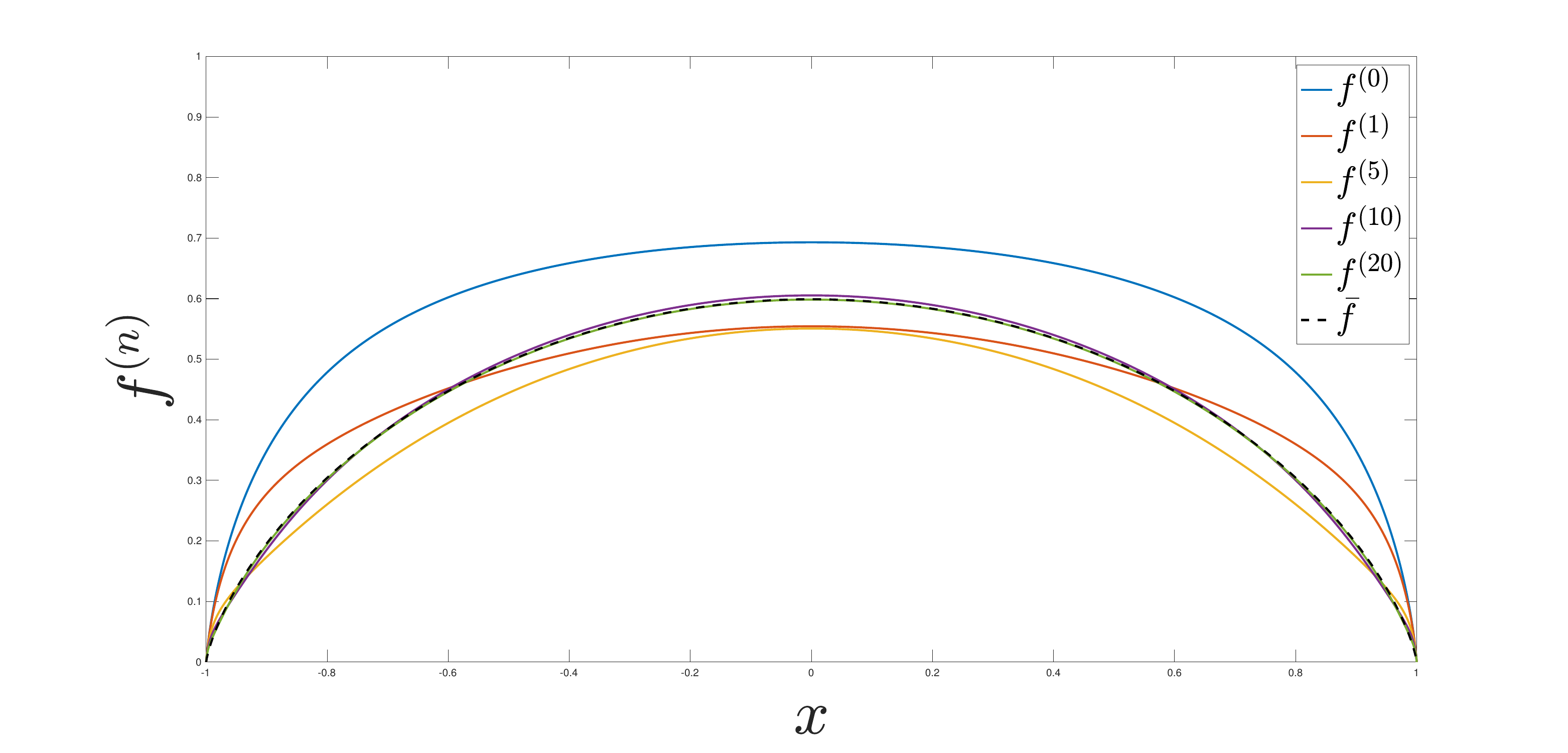}
        \caption{\small $f^{(0)}(x) = (1-x^2)\log\left(1+\frac{1}{1-x^2}\right)$}
    \end{subfigure}
    \begin{subfigure}[b]{0.45\textwidth}
        \includegraphics[width=1\textwidth]{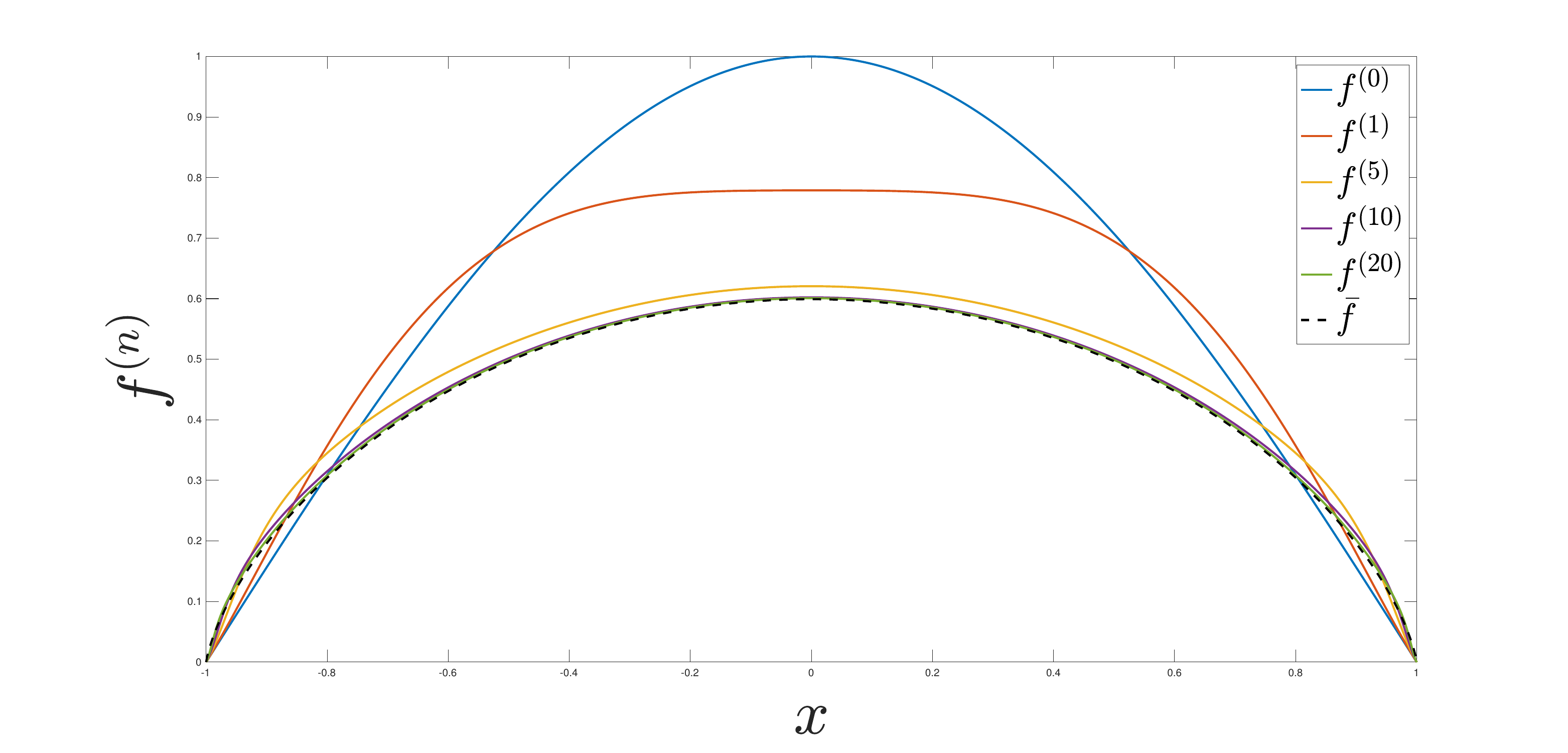}
        \caption{\small $f^{(0)}(x) = \cos(\pi x/2)$}
    \end{subfigure}
    \begin{subfigure}[b]{0.45\textwidth}
        \includegraphics[width=1\textwidth]{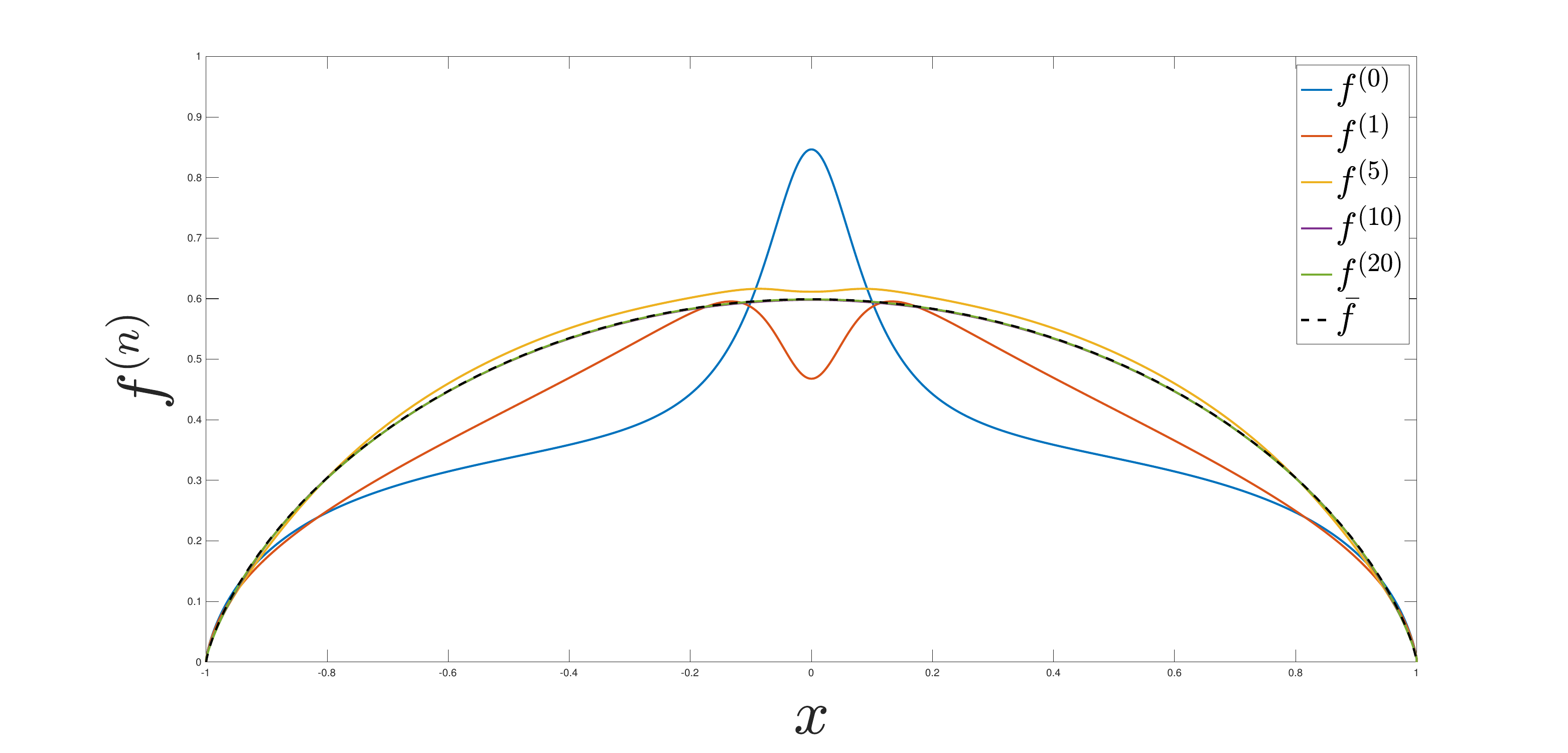}
        \caption{\small $f^{(0)}(x) = \frac{1}{2(1+100x^2)} + \frac{1-x^2}{2}\log\left(1+\frac{1}{1-x^2}\right)$}
    \end{subfigure}
    \begin{subfigure}[b]{0.45\textwidth}
        \includegraphics[width=1\textwidth]{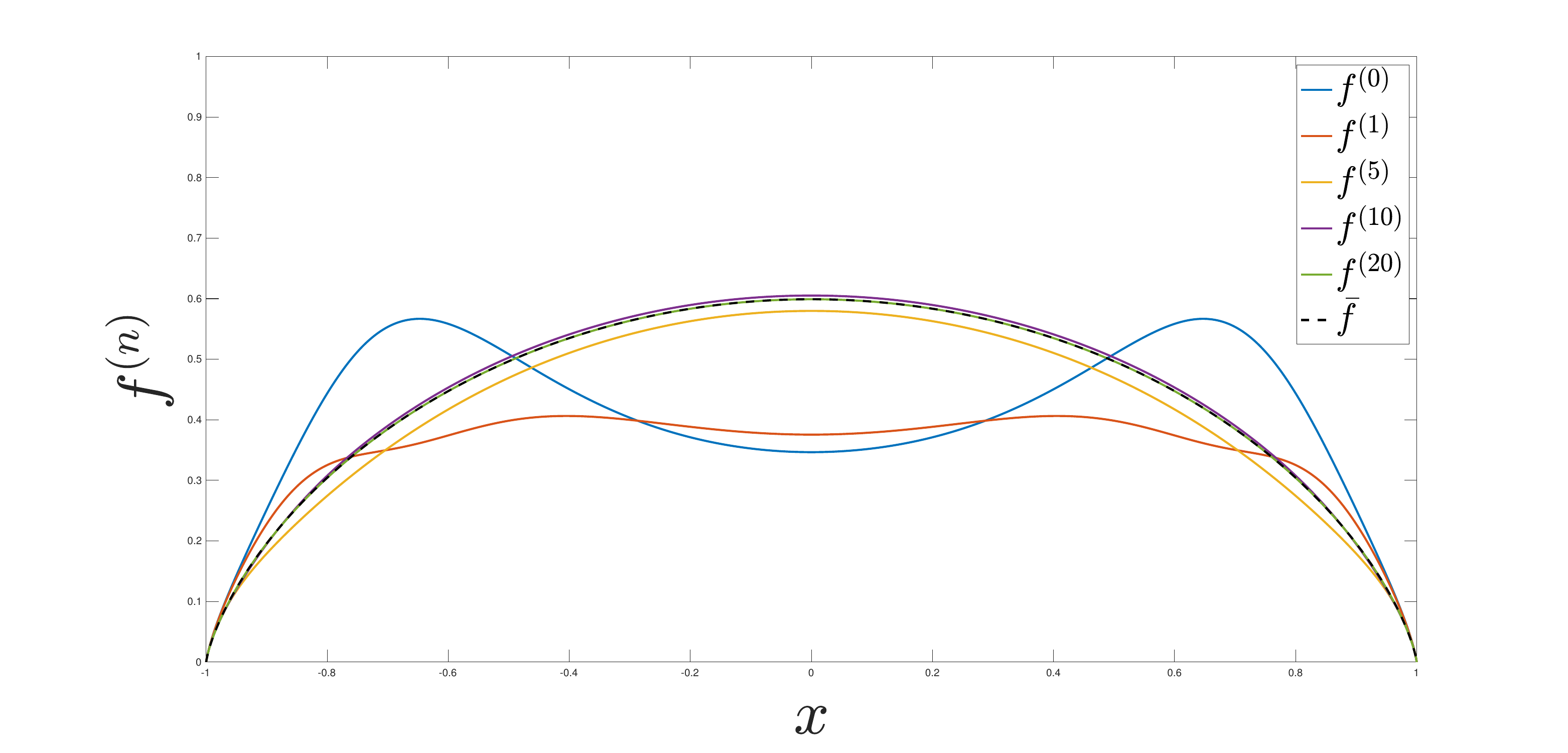}
        \caption{\small $f^{(0)}(x) = \frac{1-x^2}{1+(2x^2-1)^2}\log\left(1+\frac{1}{1-x^2}\right)$}
    \end{subfigure}
    \caption[Numerical]{Convergence of $f^{(n)}$ for different initial guesses $f^{(0)}$. The dashed curve $\bar f$ is an accurate approximate solution with its (grid-value) maximum residual $|\mtx{P}(\bar f)-\bar f|$ being smaller than $10^{-8}$.}
    \label{fig:numerical}
\end{figure}

\subsection*{Acknowledgements} The authors are supported by the National Key R\&D Program of China under the grant 2021YFA1001500. We are grateful for the stimulating discussions with Alexey Cheskidov, Tarek Elgindi, In-Jee Jeong, Peicong Song, Piotr Kokocki, Elaine Cozzi, and Farina Siddiqua hosted by the 2023 AIM workshop ``Small scale dynamics in incompressible fluid flows'', where the initial ideas of this work were formed. We would like to thank Peicong Song for performing some preliminary numerical computation that suggested the existence of a fixed-point solution. We also thank Hui Yu and Fanghua Lin for helpful discussions.

\appendix

\section{Useful formulas}\label{apx:formulas}
Given a function $f\in \mathbb{M}_0$, with $\phi(\vx) = \phi(\vx;f)$ defined in \eqref{eqt:phi_definition_2}, we have
\begin{equation}\label{eqt:phi}
\begin{split}
\phi(x_1,x_2)  &= \frac{1}{2\pi}\int_{-1}^1\idiff y_1\int_0^{f(y_1)} \frac{1}{2}\ln\frac{(x_1-y_1)^2+(x_2+y_2)^2}{(x_1-y_1)^2+(x_2-y_2)^2}\idiff y_2\\
&= \frac{1}{2\pi}\int_{-1}^1 \left\{ \frac{x_2+f(y_1)}{2}\ln\frac{(x_1-y_1)^2+(x_2+f(y_1))^2}{(x_1-y_1)^2+x_2^2} \right.\\
&\qquad\qquad\qquad +\left. \frac{x_2-f(y_1)}{2}\ln\frac{(x_1-y_1)^2+(x_2-f(y_1))^2}{(x_1-y_1)^2+x_2^2}\right.  \\
&\qquad\qquad\qquad + \left.  (x_1-y_1)\left(\arctan\left(\frac{x_2+f(y_1)}{x_1-y_1}\right) + \arctan\left(\frac{x_2-f(y_1)}{x_1-y_1}\right) \right. \right. \\
&\qquad\qquad\qquad\qquad\qquad\qquad \left. - \left. 2\arctan\left(\frac{x_2}{x_1-y_1}\right) \right)\right\} \idiff y_1,
\end{split}
\end{equation}

\begin{equation}\label{eqt:phi_x1}
\begin{split}
\phi_{x_1}(x_1,x_2) &= \frac{1}{2\pi}\int_{-1}^1\idiff y_1\int_0^{f(y_1)} \left(\frac{x_1-y_1}{(x_1-y_1)^2+(x_2+y_2)^2} -\frac{x_1-y_1}{(x_1-y_1)^2+(x_2-y_2)^2} \right)\idiff y_2\\
&= \frac{1}{2\pi}\int_{-1}^1\left\{\arctan\left(\frac{x_2+f(y_1)}{x_1-y_1}\right) + \arctan\left(\frac{x_2-f(y_1)}{x_1-y_1}\right) - 2\arctan\left(\frac{x_2}{x_1-y_1}\right)\right\}\idiff y_1,
\end{split}
\end{equation}

\begin{equation}\label{eqt:phi_x2}
\begin{split}
\phi_{x_2}(x_1,x_2) &= \frac{1}{2\pi}\int_{-1}^1\idiff y_1\int_0^{f(y_1)} \left(\frac{x_2+y_2}{(x_1-y_1)^2+(x_2+y_2)^2} -\frac{x_2-y_2}{(x_1-y_1)^2+(x_2-y_2)^2} \right)\idiff y_2\\
&= \frac{1}{2\pi}\int_{-1}^1\frac{1}{2}\ln\frac{\big((x_1-y_1)^2+(x_2+f(y_1))^2\big)\big((x_1-y_1)^2+(x_2-f(y_1))^2\big)}{\big((x_1-y_1)^2 + x_2^2\big)^2} \idiff y_1,
\end{split}
\end{equation}
and
\begin{equation}\label{eqt:cf}
c(f) = \frac{1}{2\pi}\int_{-1}^1\idiff y_1\int_0^{f(y_1)}\frac{2y_2}{(1-y_1)^2+y_2^2}\idiff y_2 = \frac{1}{2\pi}\int_{-1}^1 \ln \frac{(1-y_1)^2 + f(y_1)^2}{(1-y_1)^2}\idiff y_1.
\end{equation}

If we further assume that $f$ is strictly decreasing on $[0,1]$, so its inverse function $f^{-1}$ is well defined on $[0,f(0)]$, then we also have
\begin{equation}\label{eqt:phi_inverse}
\phi(x_1,x_2)  = \frac{1}{2\pi}\int_0^{f(0)}\idiff y_2 \int_{-f^{-1}(y_2)}^{f^{-1}(y_2)}\frac{1}{2}\ln\frac{(x_1-y_1)^2+(x_2+y_2)^2}{(x_1-y_1)^2+(x_2-y_2)^2}\idiff y_1,
\end{equation}

\begin{equation}\label{eqt:phi_x1_inverse}
\begin{split}
\phi_{x_1}(x_1,x_2) &= \frac{1}{2\pi}\int_0^{f(0)}\idiff y_2 \int_{-f^{-1}(y_2)}^{f^{-1}(y_2)}  \left(\frac{x_1-y_1}{(x_1-y_1)^2+(x_2+y_2)^2} -\frac{x_1-y_1}{(x_1-y_1)^2+(x_2-y_2)^2} \right)\idiff y_1\\
&= \frac{1}{2\pi}\int_0^{f(0)}\frac{1}{2}\ln\frac{\big((x_1-f^{-1}(y_2))^2+(x_2-y_2)^2\big)\big((x_1+f^{-1}(y_2))^2+(x_2+y_2)^2\big)}{\big((x_1-f^{-1}(y_2))^2+(x_2+y_2)^2\big)\big((x_1+f^{-1}(y_2))^2+(x_2-y_2)^2\big)}\idiff y_2,
\end{split}
\end{equation}

\begin{equation}\label{eqt:phi_x2_inverse}
\begin{split}
\phi_{x_2}(x_1,x_2) &= \frac{1}{2\pi}\int_0^{f(0)}\idiff y_2 \int_{-f^{-1}(y_2)}^{f^{-1}(y_2)} \left(\frac{x_2+y_2}{(x_1-y_1)^2+(x_2+y_2)^2} -\frac{x_2-y_2}{(x_1-y_1)^2+(x_2-y_2)^2} \right)\idiff y_1\\
&= \frac{1}{2\pi}\int_0^{f(0)} \left\{ \arctan\left(\frac{x_1-f^{-1}(y_2)}{x_2-y_2}\right) - \arctan\left(\frac{x_1-f^{-1}(y_2)}{x_2+y_2}\right)\right.\\
&\qquad\qquad\qquad\quad \left. - \arctan\left(\frac{x_1+f^{-1}(y_2)}{x_2-y_2}\right) + \arctan\left(\frac{x_1+f^{-1}(y_2)}{x_2+y_2}\right)\right\}\idiff y_2,
\end{split}
\end{equation}
and
\begin{equation}\label{eqt:cf_inverse}
\begin{split}
c(f) &= \frac{1}{2\pi}\int_0^{f(0)}\idiff y_2 \int_{-f^{-1}(y_2)}^{f^{-1}(y_2)}\frac{2y_2}{(1-y_1)^2+y_2^2}\idiff y_1 \\
&= \frac{1}{\pi}\int_0^{f(0)}\left\{\arctan\left(\frac{f^{-1}(y_2)-1}{y_2}\right) + \arctan\left(\frac{f^{-1}(y_2)+1}{y_2}\right)\right\}\idiff y_2.
\end{split}
\end{equation}

\section{Proof of Lemma \ref{lem:F_lower_bound}}\label{apx:proofs}

\begin{proof}[Proof of Lemma \ref{lem:F_lower_bound}]
For notation simplicity, write $x_1=x$, $x_2=f(x)$, $F = F(\cdot\,,\cdot\,;f)$, and $\phi=\phi(\cdot\,,\cdot\,;f)$. Since $f\in \mathbb{M}_1$, $f^{-1}(y)$ is well-defined and decreasing in $y$ for $y\in[0,f(0)]$. We invoke \eqref{eqt:phi_inverse} to obtain
\begin{align*}
\phi(x_1,x_2) &= \frac{1}{2\pi}\int_0^{f(0)}\idiff y_2 \int_{-f^{-1}(y_2)}^{f^{-1}(y_2)}\frac{1}{2}\ln\frac{(x_1-y_1)^2+(x_2+y_2)^2}{(x_1-y_1)^2+(x_2-y_2)^2}\idiff y_1\\
&= \frac{1}{2\pi}\int_0^{f(0)}\idiff y_2 \int_{-f^{-1}(y_2)}^{f^{-1}(y_2)}\idiff y_1\int_{x_2-y_2}^{x_2+y_2}\frac{z}{(x_1-y_1)^2+z^2}\idiff z\\
&= \frac{1}{2\pi}\int_0^{f(0)}\idiff y_2 \int_{x_2-y_2}^{x_2+y_2}\idiff z\int_{-f^{-1}(y_2)}^{f^{-1}(y_2)}\frac{z}{(x_1-y_1)^2+z^2}\idiff y_1\\
&= \frac{1}{2\pi}\int_0^{f(0)}\idiff y_2 \int_{x_2-y_2}^{x_2+y_2} \left\{\arctan\left(\frac{f^{-1}(y_2)-x_1}{z}\right)+\arctan\left(\frac{f^{-1}(y_2)+x_1}{z}\right)\right\} \idiff z.
\end{align*}
Combining this and \eqref{eqt:cf_inverse} yields 
\begin{align*}
F(x_1,x_2) &= \frac{\phi(x_1,x_2)}{x_2} - c(f)\\
&= \frac{1}{\pi }\int_0^{f(0)}\idiff y_2 \int_{x_2-y_2}^{x_2+y_2} \frac{1}{2x_2}\left\{\arctan\left(\frac{f^{-1}(y_2)-x_1}{z}\right)+\arctan\left(\frac{f^{-1}(y_2)+x_1}{z}\right)\right\} \idiff z\\
&\quad - \frac{1}{\pi}\int_0^{f(0)}\left\{\arctan\left(\frac{f^{-1}(y_2)-1}{y_2}\right) + \arctan\left(\frac{f^{-1}(y_2)+1}{y_2}\right)\right\}\idiff y_2\\
&=: \frac{1}{\pi}\left(J_1 + J_2\right).
\end{align*}
The last line above is a decomposition of $F(x_1,x_2)$ by splitting the integrals into two parts: $J_1$ corresponds to the integral for $y_2\in[0,x_2]$, and $J_2$ for $y_2\in[x_2,f(0)]$. We first control
\begin{align*}
J_1 &:= \int_0^{x_2}\idiff y_2\int_{x_2-y_2}^{x_2+y_2}  \frac{1}{2x_2}\left\{\arctan\left(\frac{f^{-1}(y_2)-x_1}{z}\right)+\arctan\left(\frac{f^{-1}(y_2)+x_1}{z}\right)\right\} \idiff z\\
&\quad\ - \int_0^{x_2}\left\{\arctan\left(\frac{f^{-1}(y_2)-1}{y_2}\right) + \arctan\left(\frac{f^{-1}(y_2)+1}{y_2}\right)\right\}\idiff y_2.
\end{align*}
Note that $z\mapsto \arctan(z^{-1})$ is convex on $[0,+\infty)$. Then, since $f^{-1}(y_2)\in [x_1,1]$ for $y_2\in[0,x_2]$,
\begin{align*}
J_1 &\geq \int_0^{x_2} \frac{y_2}{x_2}\left\{\arctan\left(\frac{f^{-1}(y_2)-x_1}{x_2}\right)+\arctan\left(\frac{f^{-1}(y_2)+x_1}{x_2}\right)\right\}\idiff y_2 \\
&\quad - \int_0^{x_2}\left\{\arctan\left(\frac{f^{-1}(y_2)-1}{y_2}\right) + \arctan\left(\frac{f^{-1}(y_2)+1}{y_2}\right)\right\}\idiff y_2\\
&= \int_0^{x_2} \frac{y_2}{x_2}\left\{\arctan\left(\frac{f^{-1}(y_2)-x_1}{x_2}\right)+\arctan\left(\frac{1-f^{-1}(y_2)}{y_2}\right)+\arctan\left(\frac{f^{-1}(y_2)+x_1}{x_2}\right)\right\}\idiff y_2 \\
&\quad + \int_0^{x_2}\left\{\left(1-\frac{y_2}{x_2}\right)\arctan\left(\frac{1-f^{-1}(y_2)}{y_2}\right) - \arctan\left(\frac{f^{-1}(y_2)+1}{y_2}\right)\right\}\idiff y_2\\
&\geq \int_0^{x_2} \frac{y_2}{x_2}\left\{\arctan\left(\frac{f^{-1}(y_2)-x_1}{x_2}\right)+\arctan\left(\frac{1-f^{-1}(y_2)}{x_2}\right)+\arctan\left(\frac{f^{-1}(y_2)+x_1}{x_2}\right)\right\}\idiff y_2 \\
&\quad + \int_0^{x_2}\left\{\left(1-\frac{y_2}{x_2}\right)\arctan\left(\frac{1-f^{-1}(y_2)}{y_2}\right) - \frac{\pi}{2}\right\}\idiff y_2.
\end{align*}
Furthermore, we notice that
\[\arctan\left(\frac{f^{-1}(y_2)-x_1}{x_2}\right) + \arctan\left(\frac{1- f^{-1}(y_2)}{x_2}\right)\geq \arctan\left(\frac{1-x_1}{x_2}\right),\]
and 
\[\arctan\left(\frac{1}{z}\right)\leq \frac{C}{1+z},\quad z>0,\]
for some absolute constant $C>0$. Hence, 
\begin{align*}
J_1 &\geq \int_0^{x_2} \frac{y_2}{x_2}\left\{\pi - \arctan\left(\frac{x_2}{1-x_1}\right) - \arctan\left(\frac{x_2}{f^{-1}(y_2) + x_1}\right)\right\}\idiff y_2 \\
&\quad + \int_0^{x_2}\left\{\left(1-\frac{y_2}{x_2}\right)\arctan\left(\frac{1-f^{-1}(y_2)}{y_2}\right) - \frac{\pi}{2}\right\}\idiff y_2\\
&\geq - \int_0^{x_2} \frac{y_2}{x_2}\left\{  \frac{Cx_2}{1-x_1+x_2} + \frac{Cx_2}{f^{-1}(y_2)+x_1+x_2}\right\}\idiff y_2 +\int_0^{x_2}\left(1-\frac{y_2}{x_2}\right)\arctan\left(\frac{1-f^{-1}(y_2)}{y_2}\right) \idiff y_2\\
&\geq \int_0^{x_2}\left(1-\frac{y_2}{x_2}\right)\arctan\left(\frac{1-f^{-1}(y_2)}{y_2}\right) \idiff y_2 - \frac{Cx_2^2}{\max\{1-x_1,x_2\}} -C\int_0^{x_2}\frac{x_2}{f^{-1}(y_2)+x_1+x_2}\idiff y_2.
\end{align*}

Secondly, we bound $J_2$ from below as
\begin{align*}
J_2 &:= \int_{x_2}^{f(0)}\idiff y_2\int_{x_2-y_2}^{x_2+y_2}  \frac{1}{2x_2}\left\{\arctan\left(\frac{f^{-1}(y_2)-x_1}{z}\right)+\arctan\left(\frac{f^{-1}(y_2)+x_1}{z}\right)\right\} \idiff z\\
&\quad\ - \int_{x_2}^{f(0)}\left\{\arctan\left(\frac{f^{-1}(y_2)-1}{y_2}\right) + \arctan\left(\frac{f^{-1}(y_2)+1}{y_2}\right)\right\}\idiff y_2\\
&= \int_{x_2}^{f(0)}\idiff y_2\int_{y_2-x_2}^{y_2+x_2}  \frac{1}{2x_2}\left\{\arctan\left(\frac{f^{-1}(y_2)-x_1}{z}\right)+\arctan\left(\frac{f^{-1}(y_2)+x_1}{z}\right)\right\} \idiff z\\
&\quad - \int_{x_2}^{f(0)}\left\{\arctan\left(\frac{f^{-1}(y_2)-1}{y_2}\right) + \arctan\left(\frac{f^{-1}(y_2)+1}{y_2}\right)\right\}\idiff y_2\\
& = \int_{x_2}^{f(0)}\idiff y_2\int_{y_2-x_2}^{y_2+x_2}  \frac{1}{2x_2}\left\{\arctan\left(\frac{f^{-1}(y_2)+x_1}{z}\right)-\arctan\left(\frac{x_1-f^{-1}(y_2)}{z}\right) \right. \\
&\qquad\qquad\qquad\qquad\qquad\qquad \left. +\arctan\left(\frac{1-f^{-1}(y_2)}{z}\right)-\arctan\left(\frac{f^{-1}(y_2)+1}{z}\right) \right\} \idiff z\\
&\quad + \left[\int_{x_2}^{f(0)}\idiff y_2\int_{y_2-x_2}^{y_2+x_2}  \frac{1}{2x_2}\left\{\arctan\left(\frac{f^{-1}(y_2)+1}{z}\right)- \arctan\left(\frac{1-f^{-1}(y_2)}{z}\right)\right\} \idiff z\right.\\
&\qquad\quad\left. - \int_{x_2}^{f(0)}\left\{ \arctan\left(\frac{f^{-1}(y_2)+1}{y_2}\right) - \arctan\left(\frac{1-f^{-1}(y_2)}{y_2}\right)\right\}\idiff y_2\right]\\
&=: J_{2,1} + J_{2,2}.
\end{align*}
We control $J_{2,1}$ as
\begin{align*}
J_{2,1} &= \int_{x_2}^{f(0)}\idiff y_2\int_{y_2-x_2}^{y_2+x_2}  \frac{1}{2x_2}\left\{\arctan\left(\frac{f^{-1}(y_2)+x_1}{z}\right)-\arctan\left(\frac{x_1-f^{-1}(y_2)}{z}\right) \right. \\
&\qquad\qquad\qquad\qquad\qquad\qquad \left. +\arctan\left(\frac{1-f^{-1}(y_2)}{z}\right)-\arctan\left(\frac{f^{-1}(y_2)+1}{z}\right) \right\} \idiff z\\
&= \int_{x_2}^{f(0)}\idiff y_2\int_{y_2-x_2}^{y_2+x_2}  \frac{1}{2x_2}\left\{\arctan\left(\frac{2f^{-1}(y_2)z}{x_1^2-f^{-1}(y_2)^2+z^2}\right)-\arctan\left(\frac{2f^{-1}(y_2)z}{1-f^{-1}(y_2)^2+z^2}\right) \right\} \idiff z\\
&= \int_{x_2}^{f(0)}\idiff y_2\int_{y_2-x_2}^{y_2+x_2} \frac{1}{2x_2}\arctan\left(\frac{2(1-x_1^2)f^{-1}(y_2)z}{(x_1^2-f^{-1}(y_2)^2+z^2)(1-f^{-1}(y_2)^2+z^2) + 4f^{-1}(y_2)^2z^2}\right)\idiff z\\
&\geq \int_{x_2}^{f(0)}\idiff y_2\int_{y_2-x_2}^{y_2+x_2} \frac{1}{2x_2}\arctan\left(\frac{\tilde C(1-x_1)f^{-1}(y_2)z}{(1-f^{-1}(y_2))^2 + y_2^2}\right)\idiff z,
\end{align*}
where we have used $f^{-1}(y_2)\leq x_1\leq 1$ and $0\leq z\leq y_2+x_2\leq 2y_2\leq 2M$ for $y_2\in[x_2,f(0)]\subset[0,M]$. Since
\[\frac{(1-x_1)f^{-1}(y_2)z}{(1-f^{-1}(y_2))^2 + y_2^2}\leq \frac{2(1-x_1)y_2}{(1-x_1)^2 + y_2^2}\leq 1,\]
it follows that
\[
J_{2,1} \geq \int_{x_2}^{f(0)}\idiff y_2\int_{y_2-x_2}^{y_2+x_2} \frac{1}{2x_2}\cdot \frac{\tilde C(1-x_1)f^{-1}(y_2)z}{(1-f^{-1}(y_2))^2 + y_2^2}\idiff z \geq \tilde C \int_{x_2}^{f(0)}\frac{(1-x_1)f^{-1}(y_2)y_2}{(1-f^{-1}(y_2))^2 + y_2^2}\idiff y_2.
\]
Next, we use the convexity of $z\mapsto \arctan(z^{-1})$ on $[0,+\infty)$ again to bound $J_{2,2}$ as 
\begin{align*}
J_{2,2} &= \int_{x_2}^{f(0)}\idiff y_2\int_{y_2-x_2}^{y_2+x_2}  \frac{1}{2x_2}\left\{\arctan\left(\frac{1+f^{-1}(y_2)}{z}\right)- \arctan\left(\frac{1-f^{-1}(y_2)}{z}\right)\right\} \idiff z\\
&\quad - \int_{x_2}^{f(0)}\left\{ \arctan\left(\frac{1+f^{-1}(y_2)}{y_2}\right) - \arctan\left(\frac{1-f^{-1}(y_2)}{y_2}\right)\right\}\idiff y_2\\
&= \int_{x_2}^{f(0)}\idiff y_2\int_0^{x_2}  \frac{1}{2x_2}\left\{\arctan\left(\frac{1+f^{-1}(y_2)}{y_2+z}\right)+\arctan\left(\frac{1+f^{-1}(y_2)}{y_2-z}\right) \right. \\
&\qquad\qquad\qquad\qquad\qquad\quad - \left.\arctan\left(\frac{1-f^{-1}(y_2)}{y_2+z}\right)  -\arctan\left(\frac{1-f^{-1}(y_2)}{y_2-z}\right)\right\} \idiff z\\
&\quad - \int_{x_2}^{f(0)}\left\{ \arctan\left(\frac{1+f^{-1}(y_2)}{y_2}\right) - \arctan\left(\frac{1-f^{-1}(y_2)}{y_2}\right)\right\}\idiff y_2\\
&\geq -\int_{x_2}^{f(0)}\frac{1}{2}\left\{\arctan\left(\frac{1-f^{-1}(y_2)}{y_2+x_2}\right)  +\arctan\left(\frac{1-f^{-1}(y_2)}{y_2-x_2}\right) - 2\arctan\left(\frac{1-f^{-1}(y_2)}{y_2}\right) \right\}\idiff y_2.
\end{align*}
With $a=1-f^{-1}(y_2)>0$, we compute that
\begin{align*}
&\arctan\left(\frac{a}{y_2+x_2}\right) + \arctan\left(\frac{a}{y_2-x_2}\right) -2\arctan\left(\frac{a}{y_2}\right)\\
&= \arctan\left(\frac{y_2}{a}\right) -\arctan\left(\frac{y_2-x_2}{a}\right) - \left(\arctan\left(\frac{y_2+x_2}{a}\right)-\arctan\left(\frac{y_2}{a}\right)\right)\\
&= \arctan\left(\frac{ax_2}{a^2 + y_2(y_2-x_2)}\right) - \arctan\left(\frac{ax_2}{a^2 + y_2(y_2+x_2)}\right)\\
&= \arctan\left(\frac{a^2 + y_2(y_2+x_2)}{ax_2}\right) - \arctan\left(\frac{a^2 + y_2(y_2-x_2)}{ax_2}\right)\\
&= \arctan\left(\frac{2ax_2^2y_2}{a^2x_2^2 + (a^2 + y_2(y_2+x_2))(a^2 + y_2(y_2-x_2))}\right)\\
&= \arctan\left(\frac{2ax_2^2y_2}{a^2x_2^2 + a^4+2a^2y_2^2+ y_2^2(y_2^2-x_2^2)}\right)\\
&\leq \left\{
\begin{array}{ll}
\displaystyle \arctan\left(\frac{Cx_2}{a}\right)\leq C\min\left\{\frac{x_2}{a}\,,\,1\right\}, & y_2\in[x_2,2x_2],\\\\
\displaystyle \arctan\left(\frac{Cx_2^2}{a^2 + y_2^2}\right)\leq \frac{Cx_2^2}{a^2 + y_2^2}, & y_2\geq 2x_2.
\end{array}
\right.
\end{align*}
Hence,
\begin{align*}
J_{2,2} &\geq -C\int_{x_2}^{\min\{2x_2,f(0)\}}\min\left\{\frac{x_2}{1-f^{-1}(y_2)}\,,\,1\right\}\idiff y_2 -C\int_{\min\{2x_2,f(0)\}}^{f(0)}\frac{x_2^2}{(1-f^{-1}(y_2))^2 + y_2^2}\idiff y_2\\
&\geq -C\int_{x_2}^{2x_2}\min\left\{\frac{x_2}{1-x_1}\,,\,1\right\}\idiff y_2 - C\int_{x_2}^{f(0)}\frac{x_2^2}{(1-x_1)^2 + y_2^2}\idiff y_2\\
&\geq -C\frac{x_2^2}{\max\{1-x_1,x_2\}}-C\frac{x_2^2}{1-x_1}\left(\arctan\left(\frac{f(0)}{1-x_1}\right) - \arctan\left(\frac{x_2}{1-x_1}\right)\right)\\
&= -C\frac{x_2^2}{\max\{1-x_1,x_2\}}-C\frac{x_2^2}{1-x_1}\left(\arctan\left(\frac{1-x_1}{x_2}\right)-\arctan\left(\frac{1-x_1}{f(0)}\right)\right)\\
&\geq -C\frac{x_2^2}{\max\{1-x_1,x_2\}}-C\frac{x_2^2}{1-x_1}\arctan\left(\frac{1-x_1}{x_2}\right)\\
&\geq -C\frac{x_2^2}{\max\{1-x_1,x_2\}}.
\end{align*}

Finally, combining all the preceding estimates, we obtain
\begin{align*}
F(x_1,x_2) &= \frac{1}{\pi}(J_1 + J_{2,1} + J_{2,2})\\
&\geq \tilde C\int_0^{x_2}\left(1-\frac{y_2}{x_2}\right)\arctan\left(\frac{1-f^{-1}(y_2)}{y_2}\right)\idiff y_2 + \tilde C\int_{x_2}^{f(0)}\frac{(1-x_1)f^{-1}(y_2)y_2}{(1-f^{-1}(y_2))^2+y_2^2}\idiff y_2\\
&\quad - C \frac{x_2^2}{\max\{1-x_1,x_2\}} - C \int_0^{x_2}\frac{x_2}{x_1+x_2+f^{-1}(y_2)}\idiff y_2,
\end{align*}
as desired.
\end{proof}

\bibliographystyle{myalpha}
\bibliography{reference}

\end{document}